\title{The Ambrose-Singer Theorem for general homogeneous manifolds with applications to symplectic geometry}
\date{}
\author{J.L. Carmona Jim\'enez and M. Castrill\'on L\'opez}
\renewcommand{\to}{\longrightarrow}
\newcommand{\R}{{\mathbb{R}}}
\newcommand{\LM}{\mathcal{L}(M)}
\newcommand{\THol}{\widetilde{Hol}}
\newcommand{\T}{\nabla}
\newcommand{\TT}{\tilde{\T}}
\newcommand{\G}{\mathcal{G}}
\newcommand{\GL}{\mathrm{GL}}
\newcommand{\Ad}{\mathrm{Ad}}
\newcommand{\h}{\mathfrak{h}}
\newcommand{\m}{\mathfrak{m}}
\newcommand{\g}{\mathfrak{g}}
\newcommand{\fin}{\hfill{$\square \square$}}
\newcommand{\dt}{\frac{d}{dt} \Big |_{t=0}}
\newcommand{\SC}[1]{\underset{\text{\tiny{{$#1$}}}}{\text{\Large{$\mathfrak{S}$}}}}
\newcommand{\curv}{\tilde{R}}
\newcommand{\tors}{\tilde{T}}
\newcommand{\xp}{\xi^{\perp}}
\newtheorem{proposition}{Proposition}[section]
\newtheorem{theorem}[proposition]{Theorem}
\newtheorem{definition}[proposition]{Definition}
\newtheorem{corollary}[proposition]{Corollary}
\newtheorem{lemma}[proposition]{Lemma}
\newtheorem{remark}[proposition]{Remark}
\newtheorem{example}[proposition]{Example}
\begin{document}

\maketitle

\begin{abstract}

	The main result of this article provides a characterization of reductive homogeneous spaces equipped with some geometric structure (not necessarily pseudo-Riemannian) in terms of the existence of a certain connection.  The result generalizes the well-known result of Ambrose and Singer for Riemannian homogeneous spaces, as well as its extensions for other geometries found in the literature. The manifold must be connected and simply connected, the connection has to be complete and has to satisfy a set of geometric partial differential equations. If the connection is not complete or the manifold is not simply-connected, the result provides a characterization of reductive locally homogeneous manifolds. Finally, we use these results in the local framework to classify with explicit expressions reductive locally homogeneous almost symplectic, symplectic and Fedosov manifolds.

\end{abstract}

	\textbf{Key words.}  Ambrose-Singer theorem, canonical connection, Fedosov manifolds, homogeneous manifolds, homogeneous structures, locally homogeneous manifolds, symplectic manifolds.

\section{Introduction} \label{Section I}

Locally symmetric spaces are characterized, as it is well known since \'Elie Cartan \cite{C1929}, either by the existence of local geodesic involutions or by having parallel Riemann curvature tensor. The global version of this classical result requires some conditions on the topology of the manifold: connectedness, simply connectedness and completeness. Recall that, in this global version, symmetric spaces become a special type of homogeneous Riemannian manifold. In 1958, Ambrose-Singer \cite{AS1958} generalized the result for an arbitrary homogeneous manifold, still assuming the same topological conditions:
\begin{theorem} \label{Theorem I1}
	Let $(M,g)$ be a connected and simply-connected complete Riemannian manifold. Then, the following statements are equivalent:
	\begin{enumerate}
		\item The manifold $M$ is Riemannian homogeneous.
		\item The manifold $M$ admits a linear connection $\TT$ satisfying:
		\begin{equation*}
		\TT R = 0,\quad \TT S = 0,\quad \TT g = 0,
		\end{equation*}
	\end{enumerate}
where $R$ is the curvature tensor of the Levi-Civita connection $\T^{LC}$ and  $S = \T^{LC} - \TT$.
\end{theorem}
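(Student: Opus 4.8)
The plan is to prove the two implications separately, with the reverse one carrying essentially all of the difficulty. For $(1) \Rightarrow (2)$, I would realize the homogeneous manifold as a coset space $M = G/H$, where $G$ is a transitive subgroup of the isometry group and $H$ the isotropy at a base point $o$. Since $G$ acts by isometries, the isotropy representation is orthogonal and the decomposition $\g = \h \oplus \m$ is reductive; I then take $\TT$ to be the associated canonical connection. The defining feature of this connection is that every $G$-invariant tensor field on $M$ is $\TT$-parallel. The metric $g$ is $G$-invariant by hypothesis, the Levi-Civita curvature $R$ is built naturally from $g$ and hence invariant as well, and the difference tensor $S = \T^{LC} - \TT$ is precisely the invariant tensor measuring the failure of $\TT$ to be the Levi-Civita connection. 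Thus $\TT g = 0$, $\TT R = 0$ and $\TT S = 0$ all follow at once.

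For $(2) \Rightarrow (1)$, the first (purely algebraic) step is to upgrade the three hypotheses into the statement that $\TT$ itself has parallel torsion and curvature, i.e. $\TT \tors = 0$ and $\TT \curv = 0$: one writes $\tors$ and $\curv$ in terms of $g$, $R$, $S$ and $\T^{LC}S$, and differentiating these expressions while using $\TT g = \TT R = \TT S = 0$ makes the right-hand sides vanish. Then I would run the Nomizu reconstruction. Fix a point $p$ and let $\h = \THol_p$ be the holonomy algebra of $\TT$, which sits inside $\mathfrak{so}(T_pM, g_p)$ because $\TT$ is metric. On the vector space $\g = T_pM \oplus \h$ define a bracket by declaring $\h$ to be a subalgebra acting tautologically on $T_pM$, and setting, for $x,y \in T_pM$,
\[
[x,y] = -\tors_p(x,y) - \curv_p(x,y),
\]
the first summand lying in $T_pM$ and the second in $\h$. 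The crucial point is that this bracket satisfies the Jacobi identity: the mixed components reduce to $\h$ being closed under its own action, while the $T_pM \times T_pM \times T_pM$ component is exactly the content of the first and second Bianchi identities once $\TT \tors = 0$ and $\TT \curv = 0$ are invoked. I expect this Jacobi verification to be the main algebraic obstacle, since it is where the parallelism hypotheses are genuinely consumed.

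Finally I would integrate. Let $G$ be the connected, simply connected Lie group with Lie algebra $\g$ and $H \subset G$ the connected subgroup integrating $\h$. Using completeness of $\TT$ one develops the infinitesimal data into a global transitive action of $G$ on $M$ by $\TT$-affine transformations preserving $g$, whence $M \cong G/H$ as a homogeneous Riemannian manifold. It is here that the three topological assumptions become indispensable: connectedness to identify a single orbit with all of $M$, completeness to integrate the transvections along geodesics of $\TT$, and simple connectedness to ensure the locally defined isometries extend globally without monodromy. The difficulty of the whole argument is therefore concentrated in this integration step together with the Jacobi identity, both of which are the homogeneous-setting analogues of the Cartan--Ambrose--Hicks recovery of a symmetric space from parallel curvature.
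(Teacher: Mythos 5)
Your forward direction and your first reduction step are both sound and coincide with the paper's treatment: the canonical connection of a reductive decomposition makes every $G$-invariant tensor parallel (Proposition \ref{Proposition AG1}), and the passage between the conditions $\TT R=0,\ \TT S=0,\ \TT g=0$ and the conditions $\TT\curv=0,\ \TT\tors=0,\ \TT g=0$ is exactly the algebraic observation used in the proof of Theorem \ref{Theorem AG7}, namely $T_XY-\tors_XY=S_XY-S_YX$ and $\curv_{XY}=R_{XY}+[S_X,S_Y]-S_{S_XY-S_YX}$ (valid once $\TT S=0$). Your Nomizu construction with Jacobi identity via the Bianchi identities is also correct algebra; it is the content of equations \eqref{Equation 4}--\eqref{Equation 9} and \eqref{Nomizu corchetes} in Section \ref{Section AS} (one imprecision: the mixed $(A,x,y)$ component of Jacobi does not reduce to $\h$ being closed under its own action, but to $\h$ annihilating $\curv$ and $\tors$, which holds because parallel tensors are holonomy-invariant).

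The genuine gap is in your final ``integration'' step, which is where the whole global theorem lives and where you only name the desired conclusion. Concretely: (i) having built the abstract simply connected group $G$ from the Nomizu algebra $\g=T_pM\oplus\h$, you take $H\subset G$ to be the connected subgroup integrating $\h$, but nothing in your construction guarantees $H$ is \emph{closed} in $G$; without closedness $G/H$ is not a manifold and the identification $M\cong G/H$ cannot even be formulated. (ii) You give no mechanism by which $G$ acts on $M$. To produce the action (or an affine diffeomorphism $G/H\to M$) one needs the global Cartan--Ambrose--Hicks theorem, and if you are willing to invoke that, the Nomizu construction becomes redundant: global CAH applied to parallel transports (which preserve $\curv$, $\tors$ and $g$) already shows that the group of $g$-preserving affine transformations of $(M,\TT)$ acts transitively, and that group is a Lie group with automatically closed isotropy. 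The paper avoids both problems by a different route: it builds the group \emph{concretely} as the universal cover of the holonomy subbundle $\tilde P(u_0)\subset\LM$, using that the fundamental and standard vector fields $\{A_k^*,B_i\}$ form a complete parallelism with constant structure constants; the action on $M$ is then essentially the bundle projection (Lemma \ref{Lemma AG4}), and closedness and connectedness of the isotropy $G_0$ fall out of the homotopy exact sequence of the fibration (Lemma \ref{Lemma AG3}), with no Jacobi verification and no appeal to CAH. Finally, a smaller but real omission: Theorem \ref{Theorem I1} assumes completeness of the Riemannian manifold $(M,g)$, not of $\TT$; to use completeness of $\TT$ (as you do, and as Theorem \ref{Theorem AG2} requires) you must insert the standard lemma that $\TT g=0$ forces $\TT$-geodesics to have constant $g$-speed, so metric completeness of $g$ implies geodesic completeness of $\TT$.
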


In fact, the classification of the tensor $S$ (also known as \emph{homogeneous structure tensor}) into $O(n)$-irreducible classes explicitly, $n=\mathrm{dim}M$, provides interesting and powerful geometric results taking benefit from the interplay between Partial differential equations, Algebra and Geometry, an ambitious program that formally started with \cite{TV1983} (for a recent reference giving a panoramic view of most of these geometric results, the reader can go to \cite{CC2019}). It is important to note that if $M$ is not simply-connected or complete, the existence of $\TT$ is still extremely useful, as it characterizes locally homogeneous manifolds, a kind of spaces that are more than a mere local version of global spaces (there are locally homogeneous spaces that are not locally isometric to global homogeneous spaces).

Important extensions of the Theorem of Ambrose-Singer have been carried out in the literature. For example, the characterization of (local) homogeneity on pseudo-Riemannian manifolds was developed in \cite{GO1997}. This situation shows a relevant difference with the original Riemannian work since the existence of the metric connection with parallel torsion and curvature characterizes (locally) homogeneous spaces of reductive type only. As we know, the Lie algebra of the group acting transitively on reductive spaces can be decomposed into two factors, invariant under the adjoint action of the isotropy subgroup. Since every Riemannian homogeneous manifold is automatically reductive, this particularity only shows up when dealing with metrics with signature. The second main extension of the homogeneous structure tensors was given when additional geometric structures are considered together with the pseudo-Riemannian metric, see \cite{K1980} and \cite{L2014}. With geometric structure they mean a reduction of the orthogonal frame bundle, that is, a $G$-structure, for a subgroup $G$ of the orthogonal group of the corresponding signature. This reduction is understood to be determined by the existence of a tensor or set of tensors on the manifold characterizing the frames of the corresponding reduction. From that point of view, the group $G$ is the stabilizer of a canonical tensor (or set of canonical tensors) on $\mathbb{R}^n$ by the natural action of $O(p,q),\, p+q=n$. When this geometry structure is included in the picture, the notion of homogeneous spaces requires the transitive action of an isometry group that also conserves the geometric tensors on $M$. Important instances of this situation include K\"ahler, quaternion-K\"ahler, Sasaki or $G_2$ spaces among others. The generalization now requires the existence of a metric connection making parallel curvature and the torsion and the tensors defining the geometry (see \cite{K1980}).

The main goal of this article is the presentation of a complete generalization of all these results in the case of homogeneous spaces in a broad sense, that is, independently of the presence of a pseudo-Riemannian metric on the manifold (see Theorem \ref{Theorem AG2} below). More specifically, we here give a characterization of reductive and homogeneous spaces equipped with a structure defined by a tensor (or a set of tensors) not necessarily associated to a $G$-structure, through the existence of a complete connection satisfying certain conditions of the Ambrose-Singer type. With homogeneity, we understand that a Lie group acts transitively and leaves the tensors invariant. For the local version of the results, we can drop again the topological conditions on the manifold as well as the completeness of the connection to have only the so-called notion of AS-manifold. In that case, reductivity must be defined carefully (in particular, we follow some ideas in \cite{L2015}) and we show that every reductive locally homogeneous manifold in the broad sense can be equipped with an Ambrose Singer connection. As particular instances of our result, if one of the tensors is a pseudo-Riemannian metric, we recover all the traditional Ambrose-Singer theorems in the literature.

Since all these previous characterizations live in the realm of pseudo-Riemannian geometry, the manifold is always equipped with a background connection. Thus, having in mind the affine structure of the space of all linear connections, the AS-connections can be regarded as $(1,1)$-tensors called \emph{homogeneous structure tensors}. From this starting point, we apply our main result to the case where the manifold is also endowed with an additional arbitrary linear connection for which the (local) transformations are assumed to be affine. We thus obtain a result generalizing the notion of homogeneous structure tensors to not metric situations. This line of thought has been followed, from an infinitesimal point of view, in \cite{O1998} and, recently, \cite{BT2020} where some non-metric homogeneous spaces with connection are tackled.

One of the main goals of these new results is to aim at following the fruitful philosophy exploited so far in the literature where explicit geometric results are provided in terms of the classifications of homogeneous structure tensors (in particular, the reader can go to \cite{BGO2011, CC2019} for surveys that collect the main contributions to this topic). Here, we show a similar idea with the classification of the torsion of the Ambrose-Singer connection or, whenever there is another background connection, the corresponding classification of homogeneous structure tensors. The structure and applications of these classifications shape an ambitious project to be developed in future works (almost complex, complex, contact, Poisson, etc). To begin with, in this article, we start this program in the case of (almost) symplectic manifolds, where explicit expressions of the classes of torsion are given. Furthermore, if the manifold is Fedosov (it has a symplectic background connection) the classification is given for the homogeneous structure tensors. The relationship between both points of view is analysed. This instance is purely non-metric and the classical results of \cite{K1980} cannot be applied as we show below.

In all the known cases in the literature, the classifications of homogeneous structure tensors have some classes whose dimension grows linearly with respect to the dimension of the manifold. These are the so-called \emph{classes of linear type}, and they have provided the most interesting results of the geometric characterizations mentioned above (see, again, \cite{BGO2011, CC2019}). Taking inspiration from these facts, we tackle the study of classes of linear type of Fedosov manifolds in the last section of the work. In this case, again, the geometric result is remarkable as it characterizes Hamiltonian (locally) homogeneous foliations of the leaves which are flat with respect to the symplectic background connection. We provide two low dimensional examples, leaving the study of higher dimensional homogeneous Fedosov manifolds for future research.

\section{A generalization of the Ambrose-Singer Theorem}\label{Section AG}

\subsection{The main result}

Let $G$ be a Lie group acting transitively on a smooth manifold $M$. Choosing a point $p_0\in M$, we can identify $M$ with $G/H$ where $H\subset G$ is the isotropy subgroup of $p_0$. Note that $M$ need not be pseudo-Riemannian and $G$ is not necessarily a group of isometries. The manifold is said to be reductive homogeneous if there is a Lie algebra decomposition $\g= \h  \oplus \mathfrak{m}$ for certain vector subspace $\mathfrak{m}\subset \g$ such that $\mathrm{Ad}_h (\m) = \m$, $\forall h \in H$. In this case, the subspace $\mathfrak{m}$ can be identified with $T_{p_0}M$ through the map $\mathfrak{m}\to T_{p_0}M$, $X\mapsto \dt \mathrm{exp}(tX) p_0$.

The action of $G$ on $M$ naturally lifts to the frame bundle $\LM$. It is well known that there is an unique connection in $\LM$, that is, an unique linear connection $\TT$, such that for every reference $u$ at $p\in M$ and for each $X \in \m$, the orbit $\exp(tX)\cdot u$ is horizontal. This is called the \emph{canonical connection} of the reductive decomposition $\g= \h  \oplus \mathfrak{m}$. This connection satisfies the following important result.

\begin{proposition}[Prop. 1.4.15, \cite{CC2019}]	\label{Proposition AG1}
	Let $M=G/H$ be a reductive homogeneous manifold equipped with the canonical connection $\TT$ and let $K$ be an invariant tensor field on $M$ with respect to the action of $G$. Then $\TT K = 0$.
\end{proposition}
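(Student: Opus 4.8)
The plan is to use the $G$-invariance of the whole configuration to reduce the claim to a single computation at the base point $p_0$, and there to identify $\TT$-parallel transport along the distinguished curves $t\mapsto\exp(tX)\,p_0$ with the differential of the $G$-action itself. For the reduction, recall that the canonical connection $\TT$ is $G$-invariant by its very construction, in the sense that each $\tau_g\colon M\to M$, $p\mapsto g\cdot p$, is an affine transformation of $\TT$; since $K$ is $G$-invariant by hypothesis, the full covariant derivative $\TT K$ is again a $G$-invariant tensor field, because $(\tau_g)_*(\TT K)=\TT((\tau_g)_*K)=\TT K$. As invariance means $(\TT K)_{g\cdot p}=(\tau_g)_*(\TT K)_p$ and $G$ acts transitively, it suffices to prove $(\TT K)_{p_0}=0$, that is $(\TT_X K)_{p_0}=0$ for every $X\in\m$ under the identification $\m\cong T_{p_0}M$, $X\mapsto\dt\exp(tX)\,p_0$.

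Next I would compute parallel transport along the orbit curve. Fix $X\in\m$, set $c(t)=\exp(tX)\,p_0$, and choose a frame $u$ at $p_0$. The defining property of the canonical connection is precisely that $t\mapsto\exp(tX)\cdot u$ is horizontal, hence it is the horizontal lift of $c$ through $u$. Writing $v\in T_{p_0}M$ as $v=u(\xi)$ with $\xi\in\R^n$, the $\TT$-parallel transport of $v$ along $c$ is $(\exp(tX)\cdot u)(\xi)$. Because the lifted $G$-action on frames is $g\cdot u=(\tau_g)_*\circ u$, this equals $(\tau_{\exp(tX)})_*\,v$; thus parallel transport from $p_0$ to $c(t)$ is exactly the differential $(\tau_{\exp(tX)})_*$, and by functoriality its natural extension $P_t$ to the tensor bundle is the induced action of $\tau_{\exp(tX)}$ on tensors.

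Finally I would conclude. By definition $(\TT_X K)_{p_0}=\dt P_t^{-1}K_{c(t)}$, where $P_t^{-1}=(\tau_{\exp(-tX)})_*$. Invariance of $K$ gives $K_{c(t)}=K_{\exp(tX)\cdot p_0}=(\tau_{\exp(tX)})_*K_{p_0}$, so $P_t^{-1}K_{c(t)}=K_{p_0}$ is independent of $t$ and its derivative vanishes. This yields $(\TT_X K)_{p_0}=0$ for all $X\in\m$, and the reduction step upgrades it to $\TT K=0$ on all of $M$.

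I expect the crux to be the middle step: translating the abstract horizontality in $\LM$ (that the orbits $\exp(tX)\cdot u$ are horizontal) into the concrete statement that parallel transport of tensors coincides with the $G$-action, while keeping the conventions for the lifted action on frames and for the induced action on tensors consistent, so that the cancellation $P_t^{-1}(\tau_{\exp(tX)})_*=\Id$ is exact. One must also be slightly careful to invoke the horizontality at a frame over the base point $p_0$ together with a direction $X\in\m$, where it holds verbatim, rather than at an arbitrary frame where the admissible directions are $\Ad_g\m$. The transitivity-and-invariance reduction and the closing differentiation are then routine.
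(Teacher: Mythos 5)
Your proposal is correct, and since the paper does not prove Proposition \ref{Proposition AG1} itself but quotes it from \cite{CC2019}, there is no in-paper argument to diverge from: your proof is precisely the standard one behind that citation, namely identifying $\TT$-parallel transport along $t\mapsto\exp(tX)\,p_0$ with $(\tau_{\exp(tX)})_*$ (cf. \cite[Vol.~II, Ch.~X]{KN1963}) and then using invariance of $K$ together with the transitivity-plus-affine-invariance reduction to the base point. The care you take to invoke horizontality only at frames over $p_0$ with directions $X\in\m$, rather than at arbitrary frames, is exactly the right precaution and leaves no gap.
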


In this work, a $n$-dimensional manifold $M$ with a \emph{geometric structure} is understood as a manifold equipped with a tensor or a set of tensors $P_1,\ldots,P_r$, $r\in\mathbb{N}$. This definition is initially more relaxed than the classical notion of geometric structure in the literature (see for example \cite{ML2004}). More precisely, a traditional approach defines a geometric structure as a reduction of the frame bundle through a canonical model linear tensor $P_0\in (\otimes ^s (\mathbb{R}^n)^*)\otimes (\otimes ^l \mathbb{R}^n)$ in $\mathbb{R}^n$. If $L$ is its stabilizer by the natural action of $ Gl(n,\mathbb{R})$ on $(\otimes ^r (\mathbb{R}^n)^*)\otimes (\otimes ^l \mathbb{R}^n)$, a $(r,l)$-tensor $P$ on $M$ defines a traditional geometric structure with model $P_0$ if the map
\begin{equation*}
	k:\mathcal{L}(M)\to (\otimes ^s (\mathbb{R}^n)^*)\otimes (\otimes ^l \mathbb{R}^n)
\end{equation*}
is defined as
\begin{equation*}
k(u)(v_1,...,v_s,\, \alpha _1,...,\alpha _l)=P(u(v_1),...,u(v_s),\, (u^*)^{-1}(\alpha_1),...,(u^*)^{-1}(\alpha_l)),
\end{equation*}
takes values in the $Gl(n,\mathbb{R})$-orbit of $P_0$. In particular, the subset $Q=k^{-1}(P_0)\subset \mathcal{L}(M)$ is a $L$-reduction of the frame bundle.

Essential examples of this situation cover the (pseudo-)Riemannian, K\"ahler, complex, symplectic or Poisson manifolds, among others. Note that some of these examples are metric, in the sense that one of the tensors $P_i$ is a (pseudo)-Riemannian metric, but some other instances are  non-metric. The Poisson case shows a geometric structure that is not necessarily traditional since the bivector tensor associated to a Poisson structure may have singularities that are incompatible with the existence of a model linear tensor $P_0$.

\begin{theorem}\label{Theorem AG2}
	Let $M$ be a connected and simply-connected manifold and let $P_1, ..., P_r$ tensor fields defining a geometric structure on $M$. Then, the following statements are equivalent:
	\begin{enumerate}
		\item The manifold $M=G/H$ is reductive homogeneous with $G$-invariant tensor fields $ P_1, ..., P_r $.
		\item The manifold $M$ admits a complete linear connection $\TT$ satisfying:
		\begin{equation} \label{Equation 1}
			 \TT \curv = 0,\quad \TT \tors = 0,\quad \TT P_i = 0 \quad i = 1, \dots , r
		\end{equation}
	\end{enumerate}
where $\curv$ and $\tors$ are the curvature and torsion tensors of $\TT$.
\end{theorem}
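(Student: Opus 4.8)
The plan is to treat the two implications by quite different means. For $(1)\Rightarrow(2)$ I would take $\TT$ to be the canonical connection of the reductive decomposition $\g=\h\oplus\m$. The invariant tensors $P_i$ are then parallel by Proposition~\ref{Proposition AG1}. Since the canonical connection is itself $G$-invariant (the $G$-action on $\LM$ preserves the horizontal distribution determined by $\m$), its curvature $\curv$ and torsion $\tors$ are $G$-invariant tensor fields, so Proposition~\ref{Proposition AG1} applies again to give $\TT\curv=0$ and $\TT\tors=0$. Completeness is immediate because the geodesics of the canonical connection through $p_0$ are the orbits $t\mapsto\e(tX)\cdot p_0$, $X\in\m$, which are defined for all $t\in\R$, and by homogeneity every geodesic is a $G$-translate of such a curve.

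The substance of the theorem is $(2)\Rightarrow(1)$, which I would carry out by reconstructing the group from the infinitesimal data of $\TT$, \`a la Nomizu. Set $V=T_{p_0}M$ and let $\mathfrak{hol}\subset\gl(V)$ be the holonomy algebra of $\TT$ at $p_0$; because $\TT\curv=0$, the Ambrose--Singer holonomy theorem shows $\mathfrak{hol}$ is already spanned by the curvature endomorphisms $\{\curv(X,Y):X,Y\in V\}$, so parallel transport adds nothing. On the vector space $\g:=V\oplus\mathfrak{hol}$ I would define a bracket by letting $\mathfrak{hol}$ act tautologically on $V$, keeping the commutator on $\mathfrak{hol}$, and setting $[X,Y]_{\g}:=-\tors(X,Y)-\curv(X,Y)$ for $X,Y\in V$, where $-\tors(X,Y)\in V$ is the $\m$-component and $-\curv(X,Y)\in\mathfrak{hol}$ the isotropy component. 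The point is then to verify the Jacobi identity: its $V$- and $\mathfrak{hol}$-components are exactly the first and second Bianchi identities of $\TT$, which under $\TT\tors=0$ and $\TT\curv=0$ collapse to purely algebraic relations among $\tors$ and $\curv$. This produces a Lie algebra with reductive splitting $\g=\mathfrak{hol}\oplus V$, and, crucially, since the holonomy preserves every $\TT$-parallel tensor, the isotropy part $\mathfrak{hol}$ stabilizes each value $(P_i)_{p_0}$.

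The remaining task is to integrate and globalize this infinitesimal model. First I would produce, for each element of $\g$, a local affine vector field on $M$ preserving $\TT$ (the elements of $V$ generating transvections along geodesics, those of $\mathfrak{hol}$ generating the isotropy rotations), thereby showing $(M,\TT)$ is locally affinely homogeneous; since the $P_i$ are parallel and the local transformations fix their values at the reference point, they are preserved as well. Then, using completeness to continue these local affine maps along geodesics and simple-connectedness to rule out monodromy, the local transformations integrate to a genuine Lie group $G$ of global affine diffeomorphisms acting transitively on $M$ and preserving all the $P_i$. Writing $H$ for the isotropy of $p_0$, the splitting $\g=\h\oplus\m$ with $\m=V$ is $\Ad_H$-invariant because $[\,\h,\m\,]\subset\m$ by construction, exhibiting $M=G/H$ as reductive homogeneous with $G$-invariant $P_1,\dots,P_r$.

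The main obstacle is precisely this last globalization step: passing from the local affine transformations (equivalently, from the abstract group $\tilde G$ integrating $\g$ with its subgroup $\tilde H$ integrating $\h$) to a transitive action on $M$ itself. This is where the three hypotheses are indispensable: completeness lets local affine maps be continued along every geodesic, connectedness makes the transvections reach all of $M$, and simple-connectedness guarantees the continuation is single-valued, yielding honest global diffeomorphisms rather than multivalued ones. In the absence of a metric one must also ensure that this developing argument never secretly uses orthogonality of the holonomy, which is why I would phrase everything in terms of parallel transport and affine maps rather than isometries.
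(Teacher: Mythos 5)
Your treatment of $(1)\Rightarrow(2)$ coincides with the paper's (canonical connection, Proposition \ref{Proposition AG1}, completeness of the canonical connection), so nothing to say there.

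For $(2)\Rightarrow(1)$ you take a genuinely different route. The paper never forms the Nomizu algebra on the base: it works upstairs on the holonomy subbundle $\tilde{P}(u_0)\subset\LM$. There the fundamental vector fields $A_k^*$ of the holonomy algebra together with the standard horizontal fields $B_i$ give a global parallelism of $\tilde{P}$; the hypotheses $\TT\tors=0$ and $\TT\curv=0$ say precisely that the brackets $[B_i,B_j]=-B(\Theta(B_i,B_j))-(\Omega(B_i,B_j))^*$ have constant coefficients, and completeness of $\TT$ makes all these fields complete. A theorem quoted from Tricerri then endows the universal cover $G$ of $\tilde{P}$ with a Lie group structure whose Lie algebra is spanned by (the lifts of) these fields; the homotopy exact sequence of $G\to M$, which is where simple connectedness enters, shows the fibres are connected, giving $M\cong G/G_0$ with $G_0$ connected (Lemma \ref{Lemma AG3}); reductivity and invariance of the $P_i$ then fall out because $\m=\mathrm{span}\{B_i\}$ satisfies $[\g_0,\m]\subset\m$ and $\tilde{P}$ lies inside the reduction of $\LM$ cut out by the $P_i$. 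Your route instead stays on $M$: Nomizu construction on $V\oplus\mathfrak{hol}$ (with the Jacobi identity reducing to the Bianchi identities plus the fact that holonomy kills parallel tensors -- all correct), followed by integration through the classical affine machinery of Kobayashi--Nomizu Vol.~I, Ch.~VI: local affine transitivity from parallel curvature and torsion, then extension of local affine maps to global affine transformations using completeness and simple connectedness. Both routes are viable. What the paper's approach buys is that the group, its action, and the reductive splitting are produced simultaneously and no extension or analyticity theorems are invoked; what yours buys is direct contact with the infinitesimal models and transvection algebras that the paper itself develops later (Section 4), and with the traditional proofs in the Riemannian literature.

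One point you elide, and which the paper treats with visible care: $[\h,\m]\subset\m$ only gives $\mathrm{ad}_{\h}$-invariance, hence $\Ad$-invariance under the \emph{identity component} of the isotropy group $H$; reductivity demands invariance under all of $\Ad(H)$, so you must know $H$ is connected. Your phrase ``$\Ad_H$-invariant because $[\h,\m]\subset\m$ by construction'' is not a proof of this. The fix is exactly the paper's: apply the homotopy exact sequence to the fibration $G\to G/H=M$, using that $M$ is simply connected and $G$ is connected, to conclude $\pi_0(H)=0$. Since the hypothesis is already in play, the gap is easily closed, but it must be closed explicitly.
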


\subsection{Proof of the main result}

Suppose $M=G/H$ is a reductive homogeneous manifold with $G$-invariant tensors fields $ P_1, ..., P_r $. If $ \TT $ is the canonical connection associated to the reductive decomposition, it is well-known that the canonical connection leaves invariant $\curv$ and $\tors$, that is $ \TT \curv = 0,\, \TT \tors = 0$. We also have $\TT P_i = 0,\, i=1,...,r$, from Proposition \ref{Proposition AG1}. The completeness of this connection comes from \cite[Ch. X, Cor. 2.5]{KN1963}.

\noindent Conversely, let $ \TT $ be a complete connection on $M$ satisfying $ \TT \curv = 0,\, \TT \tors = 0, \, \TT P_i = 0,\, i=1,...,r$. We fix a frame $u_0 \in \mathcal{L}(M)$. Let $ (\tilde{P}(u_0) \to M, \widetilde{Hol} (u_0)) $,  $ \tilde{P}(u_0)\subset \mathcal{L}(M)$, be the holonomy bundle of the connection $ \TT $. To simplify the notation, we denote $ \tilde{P}(u_0) $ by $ \tilde{P} $ and the subgroup $ \widetilde{Hol}(u_0) $ by $ \tilde{H} $. We will denote by $ \tilde{\h} $ and $ \h $ the Lie algebras of $\tilde{H}$ and $H$, respectively.

\noindent We now proceed by parts.

\noindent \textbf{A construction of a complete distribution in $ \tilde{P} $:}

\noindent  On one hand, if we choose $ \{A_1, ..., A_m \} $ a basis of $ \tilde{\h} $, the associated fundamental vector fields $ \{A_1^*, ... , A_m^* \} $ in $ \tilde{P} $ are complete.	
On the other hand, for the canonical basis $ \{e_1, ..., e_n \} $ of $\R^n $, the standard vector fields on $ \LM $,
\begin{equation*}
	B (e_1) = B_1, \quad \ldots \quad B (e_n) = B_n.
\end{equation*}
are complete on $\LM$ since $ \TT $ is a complete connection (see \cite[Vol. I, Prop. 6.5, p. 140]{KN1963}). Note that, since $\TT$ restricts to $\tilde{P}$ and each $B_i$ is horizontal with respect to it, these standard vector fields are tangent to $\tilde{P} \subset \LM$. Hence $ \{A^*_1, ..., A^*_m, B_1, ..., B_n \}$ span a complete distribution on $\tilde{P}$.

\noindent  \textbf{The structure coefficients of the generating vectors are constant:}

\noindent We have
\begin{equation*}
[A_k^*, A_l^*] = [A_k, A_l]^*,\quad [A_k^*, B_i] = B(A_k(e_i)).
\end{equation*}
We now check that $[B_i,B_j]$ has constant coefficients. We denote by $\theta$ the contact form on $\LM$ and by $\omega$ the connection form associated to $\TT$. The curvature and torsion of $\omega$ are denoted by $ \Omega $ and $ \Theta $, respectively. Then,
\begin{align*}
\Theta(B_i, B_j) & = - \theta([B_i, B_j]) \in \R^n, \\
\Omega(B_i, B_j) & = - \omega([B_i, B_j]) \in \tilde{\h}.
\end{align*}
Hence, the splitting $[B_i, B_j] = [B_i, B_j]^h+[B_i, B_j]^v$ with respect to $\omega$ can be written as
\begin{equation*}
[B_i, B_j] = B(\theta([B_i,B_j]))+\omega ([B_i, B_j])^*
=-B(\Theta(B_i, B_j)) - (\Omega(B_i, B_j))^*.
\end{equation*}
For every horizontal vector $ \overline{X} \in T_u \tilde{P}$
\begin{align*}
 \overline{X} (\Theta_u(B_i, B_j)) &= u^{- 1} ((\TT_X \tors) (X_i, X_j)) = 0, \\
 \overline{X} (\Omega_u(B_i, B_j)e_k) &= u^{- 1} ((\TT_X \curv) (X_i, X_j, X_k)) = 0,
\end{align*}
where $ X,\, X_i,\, X_j,\, X_k \in T_{\pi(u)} M $ are the projections of $ \overline{X},\, \,B_i,\, B_j,\, B_k $, respectively. Then $ \Theta (B_i, B_j) $ and $ \Omega (B_i, B_j) e_k $ are constants and hence $[B_i, B_j]$ is a combination of $\{A^*_1, ..., A^*_m$, $B_1, ..., B_n \}$ with constant coefficients.

\noindent  \textbf{$\mathbf{M}$ is a homogeneous space.}

Let $ G $ be the universal covering of $ \tilde{P} $ and let $ \rho: G \to \tilde{P} $ be the covering map. The vector fields $ \overline{A_k^*}$ and $\overline{B_i}$ on $G$ projecting to $\overline{A_k^*}$ and $\overline{B_i}$ are complete and the coefficients of the brackets are constant. Hence, (\cite[p. 10, Prop. 1.9]{T1992}), given a chosen point $e \in \rho ^{-1}(u_0)$, we can endow $G$ with a structure of Lie group with neutral element $ e$ and such that the Lie algebra $ \g $ of $G$ is generated by $ \{(\overline{A_k^*})_e, (\overline{B_i})_e \} $. As $ [A_k^*, A_l^*] = [A_k, A_l]^* $, we can consider the Lie subalgebra $ \g_0 \subset \g $ generated by $ \{(\overline {A_k^*})_e \} $ and let $ G_0 \subset G $ be the associated Lie subgroup to $\g _0$.
			
\begin{lemma}\label{Lemma AG3}
	The manifold $M$ is diffeomorphic to $ G / G_0 $ and hence it is homogeneous.
\end{lemma}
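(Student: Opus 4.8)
The plan is to build an explicit transitive action of $G$ on $M$ by exhibiting a surjective local diffeomorphism $\bar f\colon G/G_0\to M$ and then upgrading it to a global diffeomorphism. First I would consider the smooth map $f=\pi\circ\rho\colon G\to M$, where $\pi\colon \tilde{P}\to M$ is the bundle projection and $\rho\colon G\to\tilde{P}$ the covering map. Since $\rho$ is surjective (being a covering of the connected holonomy bundle $\tilde{P}$) and $\pi$ is surjective, $f$ is onto. Next I would compute the kernel of $df$: as $\rho$ is a local diffeomorphism, $\ker df_g$ agrees with $(d\rho_g)^{-1}(\ker d\pi_{\rho(g)})$, and because the fibres of $\pi$ are the $\tilde{H}$-orbits, $\ker d\pi$ is the vertical distribution spanned by the fundamental fields $A_1^*,\ldots,A_m^*$. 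Hence $\ker df$ is spanned by the lifts $\overline{A_1^*},\ldots,\overline{A_m^*}$, i.e. it is exactly the left-invariant involutive distribution determined by $\g_0$, whose maximal integral leaves are the cosets $gG_0$.

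Being connected and tangent to $\ker df$, each such coset is carried by $f$ to a single point, so $f$ descends to a map $\bar f\colon G/G_0\to M$. To make this rigorous I would note that $G_0$ is closed: it is the connected component of $e$ in the closed embedded submanifold $f^{-1}(\pi(u_0))$ (a level set of the submersion $f$), namely the leaf through $e$, and connected components of a closed embedded submanifold are closed. Thus $G/G_0$ is a smooth manifold, and $\bar f$ is a surjective local diffeomorphism because $df$ is onto with kernel precisely the tangent spaces to the cosets.

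The heart of the argument, and the step I expect to be the main obstacle, is to prove that $\bar f$ is a covering map. I would establish this by direct path lifting: given a curve $\gamma$ in $M$ and a point of $G/G_0$ over $\gamma(0)$, I first lift $\gamma$ to a $\TT$-horizontal curve in $\tilde{P}$ starting at a chosen frame over $\gamma(0)$ (horizontal lifts of curves exist over the whole parameter interval), then lift that curve through the covering $\rho$ to a curve in $G$; its image in $G/G_0$ is the desired lift of $\gamma$, and uniqueness follows from $\bar f$ being a local diffeomorphism. The same recipe lifts homotopies of curves, so $\bar f$ has the path- and homotopy-lifting properties, which is exactly what is needed to conclude that the surjective local diffeomorphism $\bar f$ is a covering map. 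It is worth stressing where the global hypotheses enter: completeness of $\TT$ is what guaranteed, through the complete standard and fundamental vector fields, that $G$ is a genuine Lie group rather than a merely local one, so that $G/G_0$ and $\bar f$ are globally defined in the first place; this is precisely the ingredient that fails in the incomplete case, leaving only local homogeneity.

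Finally, since $M$ is connected and simply connected and $G/G_0$ is connected (as $G$ is connected), the covering $\bar f$ must be one-sheeted, hence a diffeomorphism. Transporting the transitive action of $G$ on $G/G_0$ by left translations through $\bar f$ then produces a transitive action of $G$ on $M$ by diffeomorphisms, with isotropy at $f(e)=\pi(u_0)$ equal to $G_0$. Therefore $M$ is diffeomorphic to $G/G_0$ and is homogeneous, as claimed.
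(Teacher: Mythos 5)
Your proposal is correct, but it reaches the lemma by a genuinely different route than the paper. The paper asserts that $\pi_1=\pi\circ\rho\colon G\to M$ is a fibration and feeds it into the exact homotopy sequence: $\Pi_1(M,y)=0$ and $\Pi_0(G,b)=0$ force the fibres of $\pi_1$ to be connected, hence---being integral manifolds of the distribution spanned by the $\overline{A_k^*}$---the fibres are exactly the cosets $bG_0$, so the induced map $G/G_0\to M$ is a \emph{bijective} local diffeomorphism and therefore a diffeomorphism. You instead upgrade the induced map $\bar f$ to a \emph{covering} map by constructing path lifts explicitly (horizontal lift into $\tilde{P}$ with respect to $\TT$, then lift through the covering $\rho$, then project to $G/G_0$), and you spend simple connectedness only at the very end to conclude the covering is one-sheeted. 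Both arguments consume the hypothesis $\Pi_1(M)=0$ at the same logical spot (injectivity of the induced map), but through different machinery: the paper's version is shorter, at the price of invoking that $\pi\circ\rho$ is a fibration plus the exact sequence; yours is more hands-on and shows precisely where the parallel transport of $\TT$ enters, at the price of the folklore lemma that a surjective local diffeomorphism of manifolds with the path- (and homotopy-) lifting property is a covering map---a true statement, but one whose proof (continuity of lifts, construction of evenly covered neighbourhoods) is roughly as long as what the paper leaves implicit. A genuine merit of your write-up is that you justify why $G_0$ is closed (it is the connected component through $e$ of the embedded level set of the submersion $f$, which coincides with the Frobenius leaf $G_0$), and hence why $G/G_0$ carries a smooth structure at all; the paper passes over this quickly via the claim that the fibres are isomorphic to $G_0$ and closed.
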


\begin{proof}
	The map $\pi_1 = \pi \circ \rho : G \to M$ is a fibration of $M$. We take its exact homotopy sequence:	
\begin{equation*}
	\xymatrix{... \ar[r]
		& \Pi_1(M,y) \ar[r] \ar@{=}[d]
		&  \Pi_0(\pi_1^{-1}(y),b) \ar[r]
		& \Pi_0(G,b) \ar@{=}[d] \\
		& 0
		&
		& 0}
\end{equation*}
where $b\in G$ and $\rho(b) = y$. We infer that $\Pi_0 (\pi_1^{-1}(y), b) = 0$, that is, $\pi_1^{-1}(y)$ is connected. Since $\pi_1$ is continuous, we obtain that it is closed as well. Finally, by the equality $\pi_{1*}(\overline{A_k^*}) = 0$, we can deduce that the fibres are isomorphic to $G_0$ and closed.

\noindent We define
\begin{align*}
p : G/G_0 &\rightarrow M \\
[b] &\longmapsto \pi_1(b).
\end{align*}
This map $p$ is well-defined. Indeed, if we take a fixed point $b_0 \in G_0$ and we express it as $b_0 = \exp(Y_1) ... \exp(Y_s)$, with $Y_1,...,Y_s \in  \{\pi_{1*}(\overline{A_k^*}) = 0\}$, then we have $\pi(b\cdot b_0) = \pi (b)$, for all $b\in G$. Furthermore, $p$ is a diffeomorphism since it is bijective and its differential is a linear isomorphism at each point. The injectivity of the differential can be obtained from the fact that $\pi_1^{-1}(y)$ is isomorphic to $G_0$, where surjectivity is straightforward since $\rho $ and $\pi$ are both surjective.
\end{proof}

\noindent \textbf{The structure tensors are invariant and $\mathbf{M}$ is reductive.}

\begin{lemma}\label{Lemma AG4}
For any $ a \in G $, the lift $ \tilde{L_a}: \mathcal{L}(M)\to \mathcal{L}(M)$ of the map
\begin{align*}
L_a: M & \to M \\
[b] & \longmapsto [a \cdot b]
\end{align*}
restricts to the reduction bundle $\tilde{L}_a: \tilde{P} \to \tilde{P} $.
\end{lemma}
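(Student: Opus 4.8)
The plan is to prove the sharper statement that $\tilde L_a$ is intertwined with the left translation $\lambda_a\colon G\to G$, $b\mapsto a\cdot b$, through the covering map $\rho$, namely
\[
\tilde L_a\circ\rho=\rho\circ\lambda_a\colon G\to\LM .
\]
Once this is established the conclusion is immediate: since $\rho\colon G\to\tilde{P}$ is a covering of the connected holonomy bundle it is surjective, so $\rho(G)=\tilde{P}$, and using that $\lambda_a$ is a bijection of $G$ we get
\[
\tilde L_a(\tilde{P})=\tilde L_a(\rho(G))=\rho(\lambda_a(G))=\rho(G)=\tilde{P},
\]
which even upgrades the claimed inclusion to an equality, so $\tilde L_a$ restricts to a diffeomorphism of $\tilde{P}$. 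Recall that the lift of a diffeomorphism to $\LM$ is the natural prolongation $\tilde L_a(u)=(L_a)_*\circ u$, carrying a frame $u\colon\R^n\to T_{\pi(u)}M$ to the frame $(L_a)_*\circ u$ sitting over $L_a(\pi(u))$. Thus the whole matter reduces to identifying the frame $\rho(b)$ explicitly and chasing each of its vectors through $(L_a)_*$.

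The key computation is the following. By construction $\rho_*(\overline{B_i})_b=(B_i)_{\rho(b)}$, and the defining property of the standard horizontal field gives $\pi_*(B_i)_u=u(e_i)$; hence the frame $\rho(b)$ is characterised by $\rho(b)(e_i)=\pi_{1*}(\overline{B_i})_b$ for $i=1,\dots,n$. On the other hand, under the identification $M=G/G_0$ the map $L_a$ of the statement is exactly the descent of $\lambda_a$, i.e. $L_a\circ\pi_1=\pi_1\circ\lambda_a$, so that $(L_a)_*\circ\pi_{1*}=\pi_{1*}\circ(\lambda_a)_*$. Combining these facts with the invariance $(\lambda_a)_*(\overline{B_i})_b=(\overline{B_i})_{ab}$ of the reconstructed frame, we obtain
\[
\tilde L_a(\rho(b))(e_i)=(L_a)_*\big(\rho(b)(e_i)\big)=(L_a)_*\pi_{1*}(\overline{B_i})_b=\pi_{1*}(\lambda_a)_*(\overline{B_i})_b=\pi_{1*}(\overline{B_i})_{ab}=\rho(ab)(e_i).
\]
As the $e_i$ form a basis of $\R^n$ this yields $\tilde L_a(\rho(b))=\rho(ab)$ for all $b\in G$, which is precisely the intertwining relation.

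The step deserving the most care is the invariance property $(\lambda_a)_*\overline{B_i}=\overline{B_i}$ (and likewise for the $\overline{A_k^*}$). This is exactly where the nature of the reconstruction of $G$ enters: the Lie group structure produced from the complete, constant–structure–coefficient fields $\{\overline{A_k^*},\overline{B_i}\}$ is the one for which these fields are \emph{left-invariant}, and it is left translations —not right ones, which do not normalise $G_0$ in general— that descend to $G/G_0$ and implement the transitive action of Lemma \ref{Lemma AG3}. I would therefore first make this invariance explicit, verifying that the action on $M=G/G_0$ is genuinely realised by the $\lambda_a$ dual to the invariant frame, and only afterwards run the one–line frame chase above. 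The remaining ingredients are routine: surjectivity of $\rho$ follows from its being a covering of the connected manifold $\tilde{P}$, and the compatibility $\tilde L_{ab}=\tilde L_a\circ\tilde L_b$ is just naturality of the lift.
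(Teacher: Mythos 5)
Your proof is correct and takes essentially the same route as the paper: both establish the intertwining relation $\tilde{L}_a(\rho(b)) = \rho(ab)$ by identifying the frame $\rho(b)$ through its components $\rho(b)(e_i)=\pi_{1*}(\overline{B_i})_b$, then using the commutation $L_a \circ \pi_1 = \pi_1 \circ \mathbf{L}_a$ together with the left-invariance $\mathbf{L}_{a*}(\overline{B_i})_b = (\overline{B_i})_{ab}$ of the reconstructed frame fields. Your extra remarks (surjectivity of $\rho$ onto $\tilde{P}$, and the care about left- versus right-invariance under the group structure coming from Tricerri's reconstruction) simply make explicit what the paper uses tacitly.
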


\begin{proof}
Let $ \mathbf{L}_a $ be the left multiplication on $G$ by $ a \in G $. Note that $ L_a \circ \pi_1 = \pi_1 \circ \mathbf{L}_a $. Then
\begin{equation*}
	L_{a *} \circ \pi_{1 *} (\overline{B_i})_b  = \pi_{1 *} \circ \mathbf{L}_{a *} (\overline{B_i})_b  = \pi_{1 *} (\overline{B_i})_{ab},
\end{equation*}
and
\begin{align*}
L_{a *} \rho(b) & = (L_a([b]); L_{a *} \circ \pi_{1 *} (\overline{B_1})_b, ..., L_{a *} \circ \pi_{1*} (\overline{B_n})_b)  \\
& = ([ab]; \pi_{1*} (\overline{B_1})_{ab}, ..., \pi_{1 *} (\overline{B_n})_{ab}) \\
& = \rho(ab) \in \tilde{P}.
\end{align*}
Hence if $ y = \rho(b) $,
\begin{equation*}
	\tilde{L_a} (\rho (b)) = \rho (ab).
\end{equation*}
\end{proof}
\noindent Since $ \tilde{P}$ is included in the reduction of $\mathcal{L}(M)$ defined by the tensors $P_1,...,P_r$, we have that $ \tilde{L_a} $ preserves them.

\noindent On the other hand, $ \tilde {P} $ is a Lie group. The action of $G$ on $ \tilde{P} $ introduced in the previous Lemma \ref{Lemma AG4} is transitive, since $ \tilde{L_a} (\rho (b)) = \rho (ab) $, and also effective because it is constructed by linear transformation. In particular, the Lie algebra of $ \tilde {P} $ is isomorphic to the Lie algebra of its universal covering $ G $, through the isomorphism $ \rho_{e *} $ in the neutral element.

\noindent Finally, we have $ \g = \g_0 \oplus \m $, where $ \m $ is the subspace generated by $ \{B_i \} $, which clearly satisfies $ [\g_0, \m] \subset \m $. Since $ G_0 $ is connected, we have the $\Ad$ invariance of $ \m $, and the proof of Theorem \ref{Theorem AG2} is completed.
	\fin

\begin{remark}\label{Remark AG5}
The case with no geometric structure on $M$ ($r=0$) was treated in \cite[Vol II, Ch. X, Th. 2.8]{KN1963} or \cite{KK1980}. There, the authors characterize connected and simply connected reductive homogeneous manifolds $M=G/H$ by the existence of a complete connection $\TT$ such that $\, \TT \curv = 0,\, \TT \tors = 0$. Theorem \ref{Theorem AG2} thus provides the generalization of this result to manifolds endowed with additional structures ($r\geq 0$).
\end{remark}

\begin{definition} \label{Definition AG6}
Let $(M,P_1,...,P_r)$ be a manifold equipped with a geometric structure defined by a set of tensors $P_1,...,P_r$. A connection $\TT$ is called a \emph{generalized Ambrose-Singer connection} if it satisfies that:
	\begin{equation*}
		\TT \curv = 0,\quad \TT \tors = 0,\quad \TT P_i = 0,\quad i= 1,...,r
	\end{equation*}
where $\curv$ and $\tors$ are the curvature and torsion of $\TT$.
\end{definition}

For short, a generalized Ambrose-Singer connection is called an AS-connection, and the manifold $M$, equipped with the tensors $P_1, ... ,P_r$, is called an AS-manifold.

We note that Theorem \ref{Theorem AG2} generalizes the Ambrose-Singer Theorem \ref{Theorem I1} on Riemannian manifolds $(M,g)$  by setting $r=1$ and $P_1=g$. In this case, the AS conditions \eqref{Equation 1}, that is, $\TT \curv = 0,\, \TT \tors = 0,\, \TT g= 0$,  are known  to be equivalent to the more classical conditions $\TT R = 0,\, \TT S = 0,\, \TT g= 0$, where $S=\TT -\nabla ^{LC}$, and $R$ is the curvature of the Levi-Civita connection $\nabla ^{LC}$. We now show that this equivalence can be analysed from a broader perspective  for manifolds equipped with a fixed connection of which the group acting transitively must be of affine transformations.  More precisely, we have the following result.

\begin{theorem}\label{Theorem AG7}
	Let $M$ be a connected and simply-connected manifold with an affine connection $\T$ and let $P_1, ..., P_r$ tensor fields defining a geometric structure on $M$. Then, the following statements are equivalent:
	\begin{enumerate}
		\item Up to diffeomorphism, $M$ is a reductive homogeneous space $M = G/H$ and $P_1, ... , P_r, \T$ are $G$-invariant.
		\item The manifold $M$ admits a complete linear connection $\TT$ satisfying:
		\begin{equation*}
		\TT R = 0, \quad \TT T = 0,\quad \TT S = 0,\quad \TT P_i = 0 \quad i = 1, ... r,
		\end{equation*}
	\end{enumerate}
	where $R$ and $T$ are the curvature and torsion of $\T$ and $S= \T - \TT$.
\end{theorem}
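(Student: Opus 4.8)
The plan is to reduce Theorem \ref{Theorem AG7} to Theorem \ref{Theorem AG2} by regarding the background connection $\T$ as an additional piece of geometric data, encoded through the difference tensor $S=\T-\TT$. The bridge between the two lists of conditions is the observation that, once $\TT S=0$, the parallelism of the curvature and torsion of $\T$ is equivalent to the parallelism of the curvature and torsion of $\TT$. With this in hand, the two implications become short.

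For $(1)\Rightarrow(2)$ I would take $\TT$ to be the canonical connection of the reductive decomposition, which is complete by \cite[Ch. X, Cor. 2.5]{KN1963} and $G$-invariant by construction. Since $\T$ is $G$-invariant, so are its curvature $R$ and torsion $T$; moreover $S=\T-\TT$ is the difference of two $G$-invariant connections, hence a $G$-invariant $(1,2)$-tensor. Proposition \ref{Proposition AG1} then yields $\TT R=0$, $\TT T=0$, $\TT S=0$ and $\TT P_i=0$ simultaneously, which is item (2).

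For the converse $(2)\Rightarrow(1)$, the key input is the standard difference formula for two connections related by $\T=\TT+S$ (writing $S_XY:=S(X,Y)$):
\begin{equation*}
T(X,Y)=\tors(X,Y)+S_XY-S_YX,
\end{equation*}
\begin{equation*}
R(X,Y)Z=\curv(X,Y)Z+(\TT_XS)_YZ-(\TT_YS)_XZ+S_{\tors(X,Y)}Z+[S_X,S_Y]Z.
\end{equation*}
Assuming $\TT S=0$, the first identity gives $\TT T=\TT\tors$, so $\TT T=0\iff\TT\tors=0$; in the second identity the terms $(\TT_XS)$, $(\TT_YS)$ drop out and $[S_X,S_Y]$ is parallel, so $\TT R=\TT\curv+S_{\TT\tors}$, whence under $\TT\tors=0$ one gets $\TT R=0\iff\TT\curv=0$. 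Thus the hypotheses $\TT S=0,\ \TT T=0,\ \TT R=0$ of item (2) are equivalent to $\TT\curv=0,\ \TT\tors=0,\ \TT S=0$. I would then apply Theorem \ref{Theorem AG2} to the enlarged geometric structure $P_1,\dots,P_r,P_{r+1}:=S$ together with the complete connection $\TT$. This produces a reductive homogeneous presentation $M=G/H$ in which $\TT$ is the canonical connection (hence $G$-invariant) and all of $P_1,\dots,P_r,S$ are $G$-invariant. Consequently $\T=\TT+S$ is $G$-invariant, which is item (1).

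The hard part is the second equivalence in the difference-formula step: one must verify that differentiating the curvature identity with $\TT S=0$ really leaves only the single residual term $S_{\TT\tors}$, so that $\TT R=0\iff\TT\curv=0$ holds with no hypothesis beyond $\TT\tors=0$. This reduces to the fact that $S_{\tors(\cdot,\cdot)}Z$ and $[S_X,S_Y]Z$ are built algebraically from the $\TT$-parallel tensors $S$ and $\tors$, so their covariant derivatives are controlled entirely by $\TT\tors$. Everything else—the $G$-invariance of $R$, $T$ and $S$ in the forward direction, and the final assembly $\T=\TT+S$—is routine tensor bookkeeping together with a direct appeal to Theorem \ref{Theorem AG2}.
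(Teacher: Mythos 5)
Your proposal is correct and follows essentially the same route as the paper: treat $S=\T-\TT$ as an additional invariant tensor, apply Theorem \ref{Theorem AG2} to the enlarged collection $P_1,\dots,P_r,S$ (resp.\ the canonical connection and Proposition \ref{Proposition AG1} in the forward direction), and pass between the two sets of parallelism conditions via the algebraic torsion/curvature difference formulas, which close up once $\TT S=0$. If anything, your statement of the curvature difference formula, keeping the $(\TT S)$-terms and the residual $S_{\TT\tors}$ term explicit, is more careful than the paper's bare ``observation.''
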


\begin{proof}
	Let $G\subset \mathrm{Aff}(M, \T)$ be a group acting transitively on $M$ and preserving $P_1, ... , P_r$. Additionally, $G$ preserves the tensor $S = \T - \TT$. Hence, by Theorem \ref{Theorem AG2} we have that
	\begin{equation*}
	\TT \curv = 0,\quad \TT \tors = 0,\quad \TT S = 0,\quad \TT P_i = 0,\quad i= 1,...,r
	\end{equation*}
	which are equivalent to
	\begin{equation*}
	\TT R = 0, \quad \TT T = 0,\quad \TT S = 0,\quad \TT P_i = 0 \quad i = 1, ... , r
	\end{equation*}
	by the following observation,
	\begin{equation*}
	T_{X}Y -\tilde{T}_{X}Y = S_{X}Y -S_{Y}X, \quad  \tilde{R}_{XY} = R_{XY} +  [S_X, S_Y] - S_{S_{X}Y-S_{Y}X}.
	\end{equation*}
	Conversely, by Theorem \ref{Theorem AG2}, we have that there exists a Lie group $G$ preserving $S$, $P_1, ..., P_r$. Every transformation of $G$ on $M$ preserves $S$ and is an affine transformation of $\TT$. Hence, $G$ preserves $S + \TT = \T$ which means they are affine transformations of $\T$.
\end{proof}

\begin{remark}\label{Remark AG8}
	In particular, Theorem \ref{Theorem AG7} covers the case of homogeneous Riemannian manifold when $r=1$, $P_1 = g$ the metric tensor and $\T = \T^{LC}$ is the Levi-Civita connection.
\end{remark}

\begin{definition} \label{Definition AG9}
	Let $(M, P_1,...,P_r)$ be a manifold equipped with a geometric structure defined by a set of tensors $P_1,...,P_r$ together with an affine connection $\nabla$. A homogeneous structure is a collection $(M, P_1,...,P_r,\nabla, \widetilde{\nabla})$ such that
	$$
	\widetilde{\nabla}R = 0, \quad  \widetilde{\nabla}T = 0, \quad \widetilde{\nabla}S = 0, \quad \widetilde{\nabla}P_i = 0, \quad i = 1,...,r,
	$$
	where $S= \nabla -\widetilde{\nabla}$, and $R$ and $T$ are the curvature and torsion of $\nabla$ respectively. For short, we call $S$ a homogeneous structure tensor.
\end{definition}

In particular, homogeneous structure of $\T^{LC}$ are the classical homogeneous structures of Riemannian manifolds.

In the following sections, for the sake of brevity and simplicity, we consider $M$ with a geometrical structure defined by one tensor $K=(P_1, ..., P_r)$, the following results being analogous for a finite set of tensors $(P_1, ..., P_r)$.


\section{Reductive locally homogeneous manifolds} \label{Section RL}

The conditions involved in Theorem \ref{Theorem AG2} are of three different types. First, there is a group of partial differential equations expressed as the vanishing of some covariant derivatives. Second, the completeness of the AS-connection. And finally, a couple of topological conditions (connectedness and simply-connectedness) of the manifold $M$. Connectedness is not an issue, since one usually works with connected components. With respect to simply connectedness, even though essential, it is a condition that can be implemented by working with the universal cover of the manifold, and then project the structures back to the original space. The projection will probably imply that the space is locally homogeneous only, but locally isomorphic to the global homogeneous cover. The completeness, however entails more delicate information since non-complete AS connections may induce locally homogeneous manifolds that are not locally isomorphic to homogeneous spaces. In the Riemannian case we have the following classical result (see \cite{T1992}).
\begin{theorem}\label{Theorem RL1}
	Let $(M,g)$ be a Riemannian manifold. Then, $(M,g)$ is a locally homogeneous manifold if and only if there exists a linear connection $\TT$ satisfying, $\TT \curv = 0$, $\TT \tors = 0 $ and $\TT g = 0$, where $\curv$ and $\tors$ are the curvature and torsion of $\TT$.
\end{theorem}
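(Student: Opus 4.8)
The plan is to establish the two implications separately by localizing the bundle techniques behind the proof of Theorem~\ref{Theorem AG2}: the global transitive action of $G$ is replaced by a transitive pseudo-group of local isometries, and completeness of $\TT$ is no longer available, so the construction of a Lie group out of the frame bundle must be abandoned in favour of direct local comparison maps.

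For the implication ``$\TT$ exists $\Rightarrow (M,g)$ locally homogeneous'', I would argue pointwise. Fix $p,q\in M$ joined by a curve $\gamma$ and let $F\colon T_pM\to T_qM$ be the $\TT$-parallel transport along $\gamma$. Since $\TT g=0$, $\TT\curv=0$ and $\TT\tors=0$, the tensors $g,\curv,\tors$ are $\TT$-parallel, so $F$ is a linear isometry carrying $(g,\curv,\tors)_p$ to $(g,\curv,\tors)_q$. Because $\TT$ has parallel torsion and curvature, the classical local equivalence theorem for such connections (see \cite[Vol. I, Ch. VI]{KN1963}) produces an affine diffeomorphism $f$ from a normal neighbourhood of $p$ onto a normal neighbourhood of $q$ with $f(p)=q$ and $df_p=F$. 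It then remains to upgrade $f$ to an isometry: as $f$ is $\TT$-affine we have $\TT(f^*g)=f^*(\TT g)=0$, so $f^*g$ is $\TT$-parallel and coincides with $g$ at $p$ (because $df_p=F$ is a $g$-isometry); two $\TT$-parallel tensors agreeing at one point agree on the whole connected normal neighbourhood, whence $f^*g=g$ and $f$ is the desired local isometry, proving local homogeneity.

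For the converse, local homogeneity furnishes, around each point, a Lie algebra $\g$ of local Killing fields acting transitively; since the isotropy acts by orthogonal transformations, one may split $\g=\h\oplus\m$ in an $\Ad(H)$-invariant way using $g$, and the canonical connection of this local reductive datum satisfies $\TT g=0$, $\TT\curv=0$, $\TT\tors=0$ by the local analogue of Proposition~\ref{Proposition AG1}. The main obstacle --- and the only genuinely delicate point --- is to verify that these locally defined canonical connections patch into a single $\TT$ on all of $M$, i.e.\ that the homogeneous structure tensor $S=\T^{LC}-\TT$ is independent of the chosen local model. I would resolve this by appealing to Singer's description of infinitesimal homogeneity: the local isometry type, and hence $\TT$, is determined by finitely many covariant derivatives of the curvature of $\T^{LC}$, so any two overlapping constructions are intertwined by a local isometry and therefore yield the same $S$; smoothness and global well-definedness of $S$ then follow, completing the characterization and confirming that completeness of $\TT$ is exactly what separates the genuinely global statement of Theorem~\ref{Theorem AG2} from its local counterpart here.
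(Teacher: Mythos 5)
Your first implication (an AS-connection forces local homogeneity) is sound and is essentially the route the paper takes inside the proof of Theorem \ref{Theorem RL7}: since $\TT\curv=0$, $\TT\tors=0$, $\TT g=0$, every $\TT$-parallel transport preserves $(g,\curv,\tors)$, and \cite[Vol. I, p. 261--262, Thm. 7.4 and Cor. 7.5]{KN1963} converts such a linear map into a local affine transformation (a transvection); your pullback computation $\TT(f^{*}g)=f^{*}(\TT g)=0$, together with the fact that $\TT$-parallel tensors agreeing at a point agree on a connected neighbourhood, correctly upgrades these affine maps to local isometries.

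The converse, however, has a genuine gap at exactly the step you flag as delicate, and the proposed repair does not work. You claim that ``the local isometry type, and hence $\TT$, is determined by finitely many covariant derivatives of the curvature'' of $\T^{LC}$, so that two overlapping local canonical connections, being intertwined by a local isometry, ``yield the same $S$''. Both halves of this fail. First, an AS-connection is far from being determined by the metric: one and the same locally homogeneous Riemannian manifold typically carries many homogeneous structures (real hyperbolic space carries $S=0$ together with the whole family of linear-type structures $S_XY=g(X,Y)\,\xi-g(Y,\xi)\,X$ for suitable $\xi$), so no finite jet of the curvature can pin $\TT$ down. Second, an intertwining local isometry $\varphi$ with $\varphi_{*}\TT_{1}=\TT_{2}$ gives an isomorphism of the two connections, not their equality on the overlap, which is what gluing requires; the linear-type structures above are pairwise intertwined by isometries yet are pairwise distinct tensors. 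The paper (proof of Theorem \ref{Theorem RL7}, following \cite{T1992} and \cite{L2015}) avoids patching altogether by a single global construction: fix one frame $u_0$, form the reduction $P(u_0)=\{\varphi_{*}\circ u_0 : \varphi\in\G(p_0)\}$ of the frame bundle, choose one $\Ad(H(p_0))$-invariant complement $\m$ with $T_{u_0}P(u_0)=\h+\m$ (this is where your averaging belongs, since the isotropy sits inside the compact group $O(n)$; it also makes reductivity and effectiveness automatic in the Riemannian case), and declare the horizontal space at every $u\in P(u_0)$ to be $\psi_{*}\m$ for the pseudo-group element $\psi$ with $\psi_{*}u_0=u$. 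This is well defined because a local isometry is determined by its $1$-jet, and $\mathbf{H}$-invariance of the distribution follows from the $\Ad$-invariance of $\m$; the resulting principal connection on $P(u_0)$ is the desired global $\TT$. Any patching-style alternative would have to make the choice of complement canonical and isometry-equivariant at every point and then still verify coherence of the induced connections --- which amounts to rebuilding this bundle construction.
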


However, if one wants to move forward, the generalization to pseudo-Riemannian manifolds with signature implies the understanding of the notion of reductivity in the local case. That construction was recently achieved in \cite{L2015}. We generalize below the definition of reductive locally homogeneous manifolds with not necessarily metric structure, and we also characterize these manifolds through a transitive Lie pseudo-group and an AS-connection.

First, we are going to determine the notation related to this section. Let $M$ be a manifold with a geometric structure defined by a tensor or a set of tensors $K=(P_1,...,P_r)$.

\begin{definition} \label{Definition RL2}
	Let $(M,K)$ a manifold with a geometric structure defined by $K$. A \emph{pseudo-group} $\G$ is a collection of locally diffeomorphisms, $\varphi: U_{\varphi} \to M$, such that:
	\begin{itemize}
		\item Identity: $Id_M  \in \G$.
		\item Inverse: If $\varphi \in \G$, then $\varphi ^{-1} \in \G$.
		\item Restriction: If $\varphi \in \G, \varphi: U \to M$ and $V \subset U$, then $\varphi|_V \in \G$.
		\item Continuation: If $\mathrm{dom}(\varphi) = \bigcup U_k$ and $\varphi|_{U_k} \in \G$, then $\varphi \in \G$.
		\item Composition: If $\mathrm{im}(\varphi) \subset \mathrm{dom}(\psi)$, then $\psi \circ \varphi  \in \G$.
	\end{itemize}
	In addition, we will require that $\G$ leave $K$ invariant, that is, for every $\varphi \in \G$, we have that $\varphi ^* K = K$.
\end{definition}
A space endow with geometric structure $(M,K)$ is a \emph{locally homogeneous manifold} if there exists a Lie pseudo-group $\G$ acting transitively on $M$. In order to define a reductive locally homogeneous manifold, we have to know:
\begin{itemize}
	\item  The meaning of isotropy representation related to pseudo-groups.
	\item  The meaning of adjoint function.
\end{itemize}

We again fix a frame $u_0\in \mathcal{L}(M)$ over $p_0\in M$. We define $\G(p_0)$ as the set of transformations for which $p_0$ belongs to the domain of $\varphi$ and $\G(p_0,p_0) \subset \G (p_0)$ the set of transformations such that $\varphi (p_0)=p_0$. The quotient $H(p_0) = \G(p_0, p_0) / \sim$ with respect to the relation $\varphi \sim \psi \iff \varphi |_U = \psi |_U$ for some neighbourhood $U$ of $p_0$, is a topological group. We say that the action of $\G$ on $M$ is \emph{effective} and \emph{closed} if the map
\begin{equation}\label{Equation 2}
	\begin{alignedat}{3}
H(p_0) &\to  \GL(n,\R) \\
\varphi &\longmapsto u_0^{-1} \circ \varphi_* \circ u_0
	\end{alignedat}
\end{equation}
is a monomorphism and the image $\mathbf{H}(u_0)$ is closed, respectively, in particular, $\mathbf{H}(u_0)$ is a Lie subgroup of $\GL(n,\R)$. The morphism \eqref{Equation 2} will be called the isotropy representation of $\G$ on $(M,K)$.

\begin{proposition}\label{Proposition RL3}
	The action of $\G$ on $M$ is effective if and only if for every $\varphi, \psi \in \G$ such that $\varphi(p_0) = \psi(p_0)$ and  $\varphi_{*,p_0} = \psi_{*,p_0}$, then $\varphi = \psi$ in an open neighbourhood of $p_0$.
\end{proposition}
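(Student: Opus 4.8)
The plan is to exploit the group-like structure of the pseudo-group $\G$ in order to pass freely between the isotropy subgroup $\G(p_0,p_0)$ and arbitrary pairs of transformations sharing a common value and a common differential at $p_0$. The starting observation is that, because the frame $u_0\colon\R^n\to T_{p_0}M$ is a fixed linear isomorphism, conjugation by $u_0$ is itself injective; hence the isotropy representation \eqref{Equation 2} is a monomorphism if and only if the assignment $[\varphi]\mapsto\varphi_{*,p_0}$ is injective on $H(p_0)$ (this is well defined, since equivalent representatives agree near $p_0$ and thus share the same differential there). In other words, effectiveness is equivalent to the statement that an element of $\G(p_0,p_0)$ whose differential at $p_0$ is the identity must coincide with the identity on a neighbourhood of $p_0$.

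For the forward implication I would assume effectiveness and take $\varphi,\psi\in\G$ with $\varphi(p_0)=\psi(p_0)=:q$ and $\varphi_{*,p_0}=\psi_{*,p_0}$. After shrinking the domain of $\varphi$ so that its image lies inside $\mathrm{im}(\psi)=\mathrm{dom}(\psi^{-1})$---which is legitimate by the restriction and composition axioms of Definition \ref{Definition RL2}---the composite $\psi^{-1}\circ\varphi$ belongs to $\G$. It fixes $p_0$, since $(\psi^{-1}\circ\varphi)(p_0)=\psi^{-1}(q)=p_0$, so it lies in $\G(p_0,p_0)$; and by the chain rule together with $(\psi^{-1})_{*,q}=(\psi_{*,p_0})^{-1}$ its differential at $p_0$ is $(\psi_{*,p_0})^{-1}\circ\varphi_{*,p_0}=\Id$. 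By the reformulation above, the class of $\psi^{-1}\circ\varphi$ in $H(p_0)$ is trivial, which means precisely that $\psi^{-1}\circ\varphi$ equals the identity on some neighbourhood $U$ of $p_0$; equivalently $\varphi=\psi$ on $U$.

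For the converse I would assume the stated local-rigidity condition and verify that \eqref{Equation 2} is injective. Given two classes $[\varphi],[\psi]\in H(p_0)$ with the same image under the isotropy representation, the invertibility of $u_0$ yields $\varphi_{*,p_0}=\psi_{*,p_0}$; moreover $\varphi(p_0)=p_0=\psi(p_0)$ by the definition of $\G(p_0,p_0)$. The hypothesis then forces $\varphi=\psi$ near $p_0$, that is $[\varphi]=[\psi]$ in $H(p_0)$, which is exactly injectivity.

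The argument is in essence a translation between the two formulations, so I do not expect any serious analytic difficulty; the only point requiring genuine care is the bookkeeping of domains. Concretely, one must ensure that $\psi^{-1}\circ\varphi$ is a bona fide element of the pseudo-group, which is where the restriction and composition axioms enter and where one shrinks neighbourhoods so that the relevant compositions are defined, and one must confirm that the differential of a pseudo-group element behaves functorially under composition and inversion so that the identities $(\psi^{-1}\circ\varphi)_{*,p_0}=\Id$ and the reverse deduction hold on the nose. These are the steps I would treat most explicitly.
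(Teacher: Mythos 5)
Your proof is correct and follows essentially the same route as the paper: both directions reduce to forming the composite of one transformation with the inverse of the other, observing it is an isotropy element with identity differential at $p_0$, and invoking injectivity (trivial kernel) of the isotropy representation \eqref{Equation 2}, the converse being a direct unwinding of definitions. Your domain bookkeeping is in fact slightly more careful than the paper's, which writes the composite as $\psi\circ\varphi^{-1}$ (a map fixing $\varphi(p_0)$ rather than $p_0$), whereas your $\psi^{-1}\circ\varphi$ is the correct choice.
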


\begin{proof}
	It is obvious that if we have the second condition then we have an effective action.
	
	Conversely if $\varphi, \psi \in \G$ are such that $\varphi(p_0) = \psi(p_0)$ and  $\varphi_{*,p_0} = \psi_{*,p_0}$, we have $\psi \circ \varphi ^{-1}\in H(p_0), \, \psi \circ \varphi ^{-1}(p_0) = p_0$ and $(\psi \circ \varphi ^{-1})_{*,p_0} = Id_{T_{p_0}M}$. Then, $\psi \circ \varphi ^{-1}= Id_M$ in an open neighbourhood of $p_0$.
\end{proof}

We now consider
\begin{equation} \label{Equation 3}
P(u_0): = \{ \varphi_* \circ u_0 : \R^n \to T_{f(p_0)}M : \varphi\in \G(p_0) \}.
\end{equation}
This bundle is a reduction of $(\LM \to M, \GL(n,\R))$ to the group $\mathbf{H}(u_0)$.

\begin{proposition}\label{Proposition RL4}
	If $u_0,u_1\in \mathcal{L}(M)$ are two frames on $p_0$ and $p_1\in M$ respectively, then
	\begin{equation*}
		P(u_1)=P(u_0)g,
	\end{equation*}
	where $g$ is the element in $\GL(n,\R)$ such that $\psi_* u_0 = u_1 g^{-1} $, with $\psi \in \mathcal{G}$, $\varphi (p_0)=p_1$.
\end{proposition}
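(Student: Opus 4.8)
The plan is to establish the identity by a double inclusion, using only the functoriality of the pushforward together with the pseudo-group axioms. First I would record the basic compatibilities: for locally defined diffeomorphisms the chain rule gives $(\chi \circ \psi)_* = \chi_* \circ \psi_*$, and the left action of a differential commutes with the right $\GL(n,\R)$-action on frames, so that $(\varphi_* u_0)g = \varphi_*(u_0 g)$ for every $g \in \GL(n,\R)$. Since $(M,K)$ is locally homogeneous, transitivity of $\G$ guarantees the existence of $\psi \in \G(p_0)$ with $\psi(p_0) = p_1$; then $\psi_* u_0$ is a frame at $p_1$, and the element $g \in \GL(n,\R)$ with $\psi_* u_0 = u_1 g^{-1}$ is uniquely determined. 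Equivalently, $u_0 g = (\psi^{-1})_* u_1$, which will be the key relation.

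For the inclusion $P(u_1) \subseteq P(u_0)g$, I would take an arbitrary element $\chi_* u_1$ of $P(u_1)$, with $\chi \in \G(p_1)$, and substitute $u_1 = (\psi_* u_0)g$ to obtain
\[
\chi_* u_1 = \chi_*(\psi_* u_0)\,g = (\chi \circ \psi)_* u_0 \cdot g .
\]
It then remains to observe that $\chi \circ \psi \in \G(p_0)$: indeed $\psi,\chi \in \G$, and after shrinking the domain of $\psi$ so that its image lies in $\mathrm{dom}(\chi)$ (permitted by the Restriction axiom), the Composition axiom yields $\chi \circ \psi \in \G$ with $p_0 \in \mathrm{dom}(\chi\circ\psi)$. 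Hence $(\chi\circ\psi)_* u_0 \in P(u_0)$ and $\chi_* u_1 \in P(u_0)g$. The reverse inclusion is entirely symmetric: starting from $\varphi_* u_0 \in P(u_0)$ with $\varphi \in \G(p_0)$, the relation $u_0 g = (\psi^{-1})_* u_1$ gives
\[
(\varphi_* u_0)g = \varphi_*(u_0 g) = \varphi_*(\psi^{-1})_* u_1 = (\varphi \circ \psi^{-1})_* u_1 ,
\]
and $\varphi \circ \psi^{-1} \in \G(p_1)$ by the Inverse and Composition axioms (using $\psi^{-1}(p_1) = p_0 \in \mathrm{dom}(\varphi)$), so $(\varphi_* u_0)g \in P(u_1)$.

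The computations above are routine once the conventions are fixed; the only point requiring genuine care is the bookkeeping of domains and images so that the Composition axiom of Definition \ref{Definition RL2} actually applies to $\chi \circ \psi$ and to $\varphi \circ \psi^{-1}$. This is where the locally-defined nature of the elements of $\G$ enters: one must invoke the Restriction axiom to shrink domains before composing and then verify that the relevant base points still lie in the domains of the composed maps. I would also remark that, although $g$ depends on the chosen $\psi$, any two admissible choices differ by an element of the isotropy image $\mathbf{H}(u_0)$, so the coset $P(u_0)g$ is unchanged; this consistency is not needed for the statement as phrased, but it clarifies why the assignment is well posed.
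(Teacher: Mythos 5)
Your proof is correct and follows essentially the same route as the paper: the paper defines the conjugation homomorphism $\sigma(\varphi)=\psi\circ\varphi\circ\psi^{-1}$ between isotropy groups and then asserts that $R_g$ carries $P(u_0)$ to $P(u_1)$ as "a matter of checking," and your double-inclusion argument via $\chi\mapsto\chi\circ\psi$ and $\varphi\mapsto\varphi\circ\psi^{-1}$ is precisely that check, carried out carefully (including the domain-shrinking needed for the Composition axiom, which the paper glosses over).
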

\begin{proof}
	We define the homomorphism $\sigma: H(p_0)\to H(p_1)$, $\varphi \longmapsto \psi_* \circ \varphi \circ \psi ^{-1}_*$. For the sake of simplicity,
	we also denote by $\sigma: \mathbf{H}(u_0)\to \mathbf{H}(u_1)$ the induced homomorphism by the identification \eqref{Equation 2}. It is a matter of checking that $R_g:\mathcal{L}(M)\to \mathcal{L}(M)$ induces a principal bundle isomorphism between $P(u_0)$ and $P(u_1)$ with associated Lie group homomorphism $\sigma$.
\end{proof}

In particular, the groups $\mathbf{H}(u_0)$ and $\mathbf{H}(u_1)$ are always isomorphic. Because of this, we may simply write $\mathbf{H}$ for any $\mathbf{H}(u_0)$.

Given an element $\varphi \in H(p_0)$, we define
\begin{align*}
\Ad_{\varphi}: T_{u_0}P(u_0) &\to  T_{u_0}P(u_0)  \label{Equation RL3} \\
\dt (\varphi_t)_*(u_0) &\longmapsto \dt (\varphi \circ\varphi_t \circ \varphi^{-1})_* (u_0)
\end{align*}
where $\varphi _t \in \mathcal{G}$,
$t$ belonging to certain interval $(-\epsilon , \epsilon)$.

\begin{definition}\label{Definition RL5}
	Let $(M,K)$ a manifold with a geometric structure. We will say that $(M,K)$ is \emph{reductive locally homogeneous manifold} if there exists a Lie pseudo-group $\G$ acting transitively, effectively and closed on $M$, and we can decompose $T_{u_0}P = \h + \m$, where $\h$ is the lie algebra associated to $H(p_0)$ and $\m $ is a $\Ad(H(p_0))$-invariant subspace.
\end{definition}

The definition depends at first sight on the chosen frame $u_0$. However, this dependence is not real as the following result proves.

\begin{proposition}\label{Proposition RL6}
	Let $u_0$, $u_1 \in \mathcal{L}(M)$ two linear frames. Then $T_{u_1}P(u_1)$ decomposes as $\h + \m_1$ for an $\Ad(H(p_1))$-invariant subspace $\m_1$ if and only $T_{u_0}P(u_0)$ decomposes as $\h + \m_0$ for an $\Ad(H(p_0))$-invariant subspace $\m_0$.
\end{proposition}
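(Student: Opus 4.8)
The plan is to transport an $\Ad$-invariant complement from one fibre to the other by means of the principal bundle isomorphism already produced in Proposition \ref{Proposition RL4}, composed with the lift of a suitable pseudo-group element so that the resulting map sends $u_0$ to $u_1$. Recall from Proposition \ref{Proposition RL4} that $P(u_1)=P(u_0)g$ and that the right translation $R_g:\LM\to\LM$ restricts to a principal bundle isomorphism $P(u_0)\to P(u_1)$ with associated group homomorphism $\sigma:\mathbf{H}(u_0)\to\mathbf{H}(u_1)$. Let $\psi\in\G$ be the transformation with $\psi(p_0)=p_1$ and $\psi_*u_0=u_1 g^{-1}$, and let $\tilde{\psi}$ denote its natural lift $u\mapsto\psi_*\circ u$ to $\LM$. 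Since $\psi$ leaves $K$ invariant, $\tilde{\psi}$ restricts, near $u_0$, to an automorphism of the reduction $P(u_0)$; and because left lifts commute with right translations, $\tilde{\psi}$ preserves $P(u_1)=R_g(P(u_0))$ as well. Setting $F:=\tilde{\psi}\circ R_g$ one computes $F(u_0)=\tilde{\psi}(u_0 g)=(\psi_*u_0)g=u_1$, so $F$ is a principal bundle isomorphism $P(u_0)\to P(u_1)$ sending $u_0$ to $u_1$, with associated homomorphism the composition of $\sigma$ with the isotropy conjugation $\varphi\mapsto\psi\varphi\psi^{-1}$.

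First I would check that $F_*:T_{u_0}P(u_0)\to T_{u_1}P(u_1)$ carries the vertical subspace $\h$, spanned by the fundamental vector fields of $\mathbf{H}(u_0)$, isomorphically onto the vertical subspace $\h$ at $u_1$; this is immediate since $F$ is a bundle morphism and hence sends fibres to fibres and fundamental vector fields to fundamental vector fields through its associated Lie group homomorphism. The essential step is then to verify that $F_*$ intertwines the adjoint representations, namely $F_*\circ\Ad_{\varphi}=\Ad_{\psi\varphi\psi^{-1}}\circ F_*$ for every $\varphi\in H(p_0)$. Unwinding the definition, a tangent vector at $u_0$ has the form $X=\dt(\varphi_t)_*u_0$ for a curve $\varphi_t$ in $\G$ with $\varphi_0=\Id$, and $\Ad_\varphi X=\dt(\varphi\circ\varphi_t\circ\varphi^{-1})_*u_0$. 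Applying $F_*$ and using the factorisation $\psi(\varphi\varphi_t\varphi^{-1})\psi^{-1}=(\psi\varphi\psi^{-1})(\psi\varphi_t\psi^{-1})(\psi\varphi\psi^{-1})^{-1}$, one identifies the outcome with $\Ad_{\psi\varphi\psi^{-1}}(F_*X)$, since $\psi\varphi_t\psi^{-1}$ is precisely the curve through $\Id$ over $p_1$ whose velocity represents $F_*X$ up to the right translation by $g$.

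With these two facts the proposition follows at once. If $T_{u_0}P(u_0)=\h+\m_0$ with $\m_0$ an $\Ad(H(p_0))$-invariant complement, then $\m_1:=F_*(\m_0)$ is a complement to $\h$ in $T_{u_1}P(u_1)$ because $F_*$ is a linear isomorphism preserving $\h$, and it is $\Ad(H(p_1))$-invariant because $F_*$ intertwines the two adjoint actions while $\sigma$ together with $\varphi\mapsto\psi\varphi\psi^{-1}$ identifies $H(p_0)$ with $H(p_1)$. The converse is obtained by applying the same argument to $F^{-1}$. I expect the main obstacle to be the bookkeeping in the intertwining identity: one must track carefully the interplay between the left action by $\psi$, the right translation $R_g$, and the conjugation defining $\Ad$, and ensure that the domains of the locally defined transformations are large enough for all the curves involved to be defined in neighbourhoods of $p_0$ and $p_1$.
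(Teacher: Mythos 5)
Your proposal is correct and takes essentially the same route as the paper: since lifts commute with right translations, your map $F=\tilde{\psi}\circ R_g$ coincides with the paper's $\Psi = R_g\circ\psi_*$, and the paper likewise defines $\m_1=\Psi_*\m_0$ and proves invariance by exactly your conjugation identity $\Psi_*\circ\Ad_{\varphi}=\Ad_{\sigma(\varphi)}\circ\Psi_*$, inserting $\psi_*^{-1}\psi_*$ to factor $\psi(\varphi\varphi_t\varphi^{-1})\psi^{-1}$ as $(\psi\varphi\psi^{-1})(\psi\varphi_t\psi^{-1})(\psi\varphi\psi^{-1})^{-1}$. Your explicit check that $F_*$ preserves the vertical subspace and your remark that the converse follows by applying the argument to $F^{-1}$ are left implicit in the paper but are consistent with it.
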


\begin{proof}
	Given the decomposition $T_{u_0} P(u_0) = \h + \m_0$ such that $\Ad(H(p_0))_{\varphi} (\m_0) \subset \m_0$, we write $T_{u_1} P(u_1) = \h + \m_1$ with $\m_1 = \Psi_* \m_0$, where $\Psi = R_g \circ \psi_*$ and $\psi\in\mathcal{G}$ is that $\psi_* u_0= u_1 g^{-1}$ and $g\in GL(n,\mathbb{R})$. The subspace $\m_1$ is $\Ad(H(p_1))$-invariant. Indeed, for any element  $X = \dt \varphi_t (u_0)\in \m_0$ and $\varphi \in H(p_0)$, we have that
	\begin{align*}
	&\Psi_* (\Ad(H(p_0))_{\varphi}(X))  =\\
	&= \dt R_g \circ \psi_* \circ \varphi_* \circ (\varphi_t)_* \circ \varphi^{-1}_* (u_0)  \\
	&= \dt \psi_* \circ \varphi_* \circ \psi^{-1}_* \circ \psi_* \circ (\varphi_t)_* \circ \psi^{-1}_* \circ \psi_* \circ \varphi^{-1}_* \circ \psi^{-1}_* \circ u_1  \\
	&=  \Ad(H(p_1))_{(\psi_* \circ \varphi \circ \psi^{-1}_* )}(\dt \psi_* \circ \varphi_t \circ \psi^{-1}_* ( \psi_* u_0g))   \\
	&=  \Ad(H(p_1))_{(\psi_* \circ \varphi \circ \psi^{-1}_* )}(\dt R_g \circ \psi_* \circ \varphi_t ( u_0))   \\
	&=  \Ad(H(p_1))_{\sigma(\varphi)}(\Psi_*(X)).
	\end{align*}	
\end{proof}

Now we give a local version of Theorem \ref{Theorem AG2} above. Furthermore, it provides a generalization of the Tricerri's result Theorem \ref{Theorem RL1}.

\begin{theorem} \label{Theorem RL7}
	Let $(M,K)$ be a differentiable manifold with a geometric structure $K$. Then the following assertions are equivalent:
	\begin{enumerate}
		\item The manifold $(M,K)$ is a reductive locally homogeneous space, associated to the Lie pseudo-group $\G$.
		\item  There exists a connection $\TT$ such that:
	\begin{equation*}
		\TT \curv = 0, \quad \TT \tors = 0, \quad \TT K = 0,
	\end{equation*}
		where $\curv$ and $\tors$ are the curvature and torsion of $\TT$ respectively.
	\end{enumerate}
\end{theorem}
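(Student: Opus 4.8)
The plan is to follow the two-directional strategy used for Theorem \ref{Theorem AG2}, adapting each step so that the completeness of $\TT$ and the simple-connectedness of $M$ are replaced by the pseudo-group formalism of Definitions \ref{Definition RL2} and \ref{Definition RL5}. The global integration of a distribution into a Lie group $G$ must now be weakened to an integration into local flows, which are then collected into a transitive Lie pseudo-group $\G$.

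For the implication (1) $\Rightarrow$ (2), I would start from the reduction $P = P(u_0)$ to the group $\mathbf{H}$ together with the reductive splitting $T_{u_0}P = \h + \m$ with $\m$ an $\Ad(H(p_0))$-invariant subspace. The idea is to define a canonical connection $\TT$ on $P$ by declaring its horizontal subspace at $u_0$ to be $\m$ and spreading it to every point $u = \varphi_* u_0 \in P$ by the lifted action of $\G(p_0)$; the $\Ad(H(p_0))$-invariance of $\m$ makes this transport independent of the choice of $\varphi$ in the isotropy, and Proposition \ref{Proposition RL6} guarantees that the construction does not depend on the base frame, so the resulting horizontal distribution is $\mathbf{H}$-equivariant and defines a genuine connection, which extends to a linear connection on $\LM$. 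Since $K$, as well as the curvature $\curv$ and the torsion $\tors$ of the $\G$-invariant connection $\TT$, are all invariant under the pseudo-group $\G$, a local version of Proposition \ref{Proposition AG1} yields $\TT K = 0$, $\TT \curv = 0$ and $\TT \tors = 0$ simultaneously.

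For the converse (2) $\Rightarrow$ (1), I would reproduce verbatim the first two parts of the proof of Theorem \ref{Theorem AG2}: form the holonomy bundle $\tilde{P}$ of $\TT$, consider the vector fields $\{A_1^*,\dots,A_m^*,B_1,\dots,B_n\}$ spanning a distribution on $\tilde{P}$, and use $\TT \curv = 0$ and $\TT \tors = 0$ to conclude that their brackets have constant coefficients. The decisive difference is that $\TT$ is no longer assumed complete, so these vector fields are only locally integrable; instead of integrating them to a global Lie group, I would integrate them to local flows and collect the resulting local transformations into a pseudo-group $\G$. The constancy of the structure coefficients is exactly what makes these local flows compose coherently, so that $\G$ satisfies the axioms of Definition \ref{Definition RL2}; transitivity on $M$ follows because the $B_i$ project to a frame of $T_pM$ at each point. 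Each such local transformation preserves $\TT$, being generated by flows tangent to the connection, and preserves $K$, since it preserves $\tilde{P}$, which sits inside the $K$-reduction of $\LM$; hence $\G$ leaves $K$ invariant. Finally, the reductive decomposition required by Definition \ref{Definition RL5} is $T_{u_0}\tilde{P} = \tilde{\h} + \m$ with $\m = \operatorname{span}\{B_i\}$, and the relation $[A_k^*, B_i] = B(A_k(e_i)) \in \m$ shows $[\tilde{\h},\m] \subset \m$ and hence the $\Ad$-invariance of $\m$.

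The main obstacle is precisely the passage from local flows to a well-behaved pseudo-group in the absence of completeness. In Theorem \ref{Theorem AG2} completeness allowed integrating the distribution to a global group and then identifying $M$ with a quotient; here I must instead verify that the locally defined automorphisms genuinely satisfy the restriction, continuation and composition axioms, and, more delicately, that the induced isotropy representation \eqref{Equation 2} is a monomorphism with closed image, so that $\mathbf{H}$ is a Lie group and the action is effective and closed in the sense of Definition \ref{Definition RL5}. This is where the careful notion of local reductivity, following \cite{L2015}, must be invoked, and where Propositions \ref{Proposition RL3}, \ref{Proposition RL4} and \ref{Proposition RL6} carry out the bookkeeping that in the global case was automatic.
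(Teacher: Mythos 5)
Your first implication, (1)~$\Rightarrow$~(2), is essentially the paper's own argument: the horizontal distribution $D_u=\psi_*(\m)$ spread by the pseudo-group, its $\mathbf{H}$-invariance via the $\Ad$-computation, and the observation that parallel transport is realized by elements of $\G$, which is exactly the ``local Proposition~\ref{Proposition AG1}'' you invoke. The converse, however, has a genuine gap. You propose to integrate the fields $\{A_1^*,\dots,A_m^*,B_1,\dots,B_n\}$ on the holonomy bundle $\tilde{P}$ to local flows and to ``collect'' these into the transitive pseudo-group $\G$ acting on $M$. But these flows live on $\tilde{P}$ and do \emph{not} descend to local diffeomorphisms of $M$: the flow of $A_k^*$ is right translation by $\exp(tA_k)$ (it projects to the identity of $M$), and the flow of $B_i$ sends a frame $u$ along the geodesic $t\mapsto\exp_{\pi(u)}(t\,u(e_i))$, a map that depends on the frame $u$ and not only on $\pi(u)$; standard horizontal fields are not projectable, and they do not commute with the structure group action, so no element of a pseudo-group on $M$ is produced this way. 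In the global Theorem~\ref{Theorem AG2} this difficulty is resolved only through the Lie group structure on the universal cover $G$ of $\tilde{P}$: the transformations acting on $M\cong G/G_0$ are the left translations $L_a$ of Lemmas~\ref{Lemma AG3} and~\ref{Lemma AG4}, which are \emph{not} the flows of the given vector fields. A local substitute would require a local group structure on a neighbourhood in $\tilde{P}$ together with local versions of both lemmas, none of which your sketch constructs; in particular your claims that transitivity follows ``because the $B_i$ project to a frame'' and that the transformations preserve $\TT$ ``being generated by flows tangent to the connection'' are not valid inferences.

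The paper avoids all of this by a different mechanism: it takes $\G$ to be the pseudo-group of local \emph{transvections} of $\TT$. The hypotheses $\TT\curv=0$, $\TT\tors=0$ feed directly into \cite[Vol.~1, Thm.~7.4 and Cor.~7.5]{KN1963}, which produce, for any two points and any parallel transport between them, a local affine transformation realizing that transport; $\TT K=0$ then makes these transformations preserve $K$. Transitivity of $\G(p_0)$ on the holonomy bundle identifies $P(u_0)$ with $\tilde{P}(u_0)$ and $\mathbf{H}(u_0)$ with $\THol(u_0)$, settling the closedness and (via Proposition~\ref{Proposition RL3}) effectiveness conditions you correctly flag as delicate but leave unresolved. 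Finally, note that your argument for reductivity, $[\tilde{\h},\m]\subset\m$ from $[A_k^*,B_i]=B(A_k(e_i))$, is only infinitesimal: it gives invariance of $\m$ under the identity component of the isotropy, while $H(p_0)$ (a full holonomy group here) need not be connected. This is precisely why the global proof of Theorem~\ref{Theorem AG2} invokes connectedness of $G_0$ at the corresponding step, and why the paper's local proof instead verifies $\Ad(H(p_0))_{\varphi}(X)=(R_h\circ\varphi_*)_*(X)\in\m$ at the group level, using that transvections preserve the horizontal distribution.
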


\begin{proof}
	Given a Lie pseudo-group $\mathcal{G}$ acting transitively on $(M,K)$ in a reductive locally fashion, let $(P \to M, \mathbf{H})$ the principal bundle associate to the structure of reductive locally homogeneous space as in \eqref{Equation 3}, for a fixed frame $u_0 \in \LM$.
	We define a horizontal distribution $D$ in $P$ by $D_u =  \Psi _* (\m)$, $\Psi = \psi _*$, for the unique $\psi \in \mathcal{G}$ such that $\psi _* (u_0) =u$, where $T_{u_0}P=\h + \m$ is the reductive decomposition. The distribution $D$ is also $\mathbf{H}$-invariant, that is, given $Y=\Psi_* (X)\in D_u$, $X \in \m$, we have that $(R_h)_* (Y) \in D_{u\cdot h} $, for $h\in \mathbf{H}$. Indeed, we write $X = \dt (\varphi_t)_* (u_0)$ and, by \eqref{Equation 2}, $h = u_0^{-1} \circ \varphi_* \circ u_0$ for certain $\varphi \in H(p_0)$. Then
	\begin{align*}
	(R_h)_* (Y) &= (R_h\circ \Psi)_* (X) = \dt R_h \circ \psi_* \circ (\varphi_t)_* (u_0) \\
	&= \dt  \psi_* \circ (\varphi_t)_* (u_0 \circ u_0^{-1} \circ \varphi_* \circ u_0) \\
	&= \dt  \psi_* \circ (\varphi_t)_* \circ \varphi_* \circ u_0 \\
	&= \dt  \psi_* \circ \varphi_* \circ \varphi^{-1}_* (\varphi_t)_* \circ \varphi_* \circ u_0 \\
	&= (\psi_* \circ \varphi_*)_* \Ad(H(p_0))_{\varphi^{-1}}(X).
	\end{align*}
	As $\Ad(H(p_0))_{\varphi^{-1}}(X) \in \m$ by reductive condition, and $\psi \circ \varphi \in \G$ we get the invariance. This means that $D$ can be understood as a linear connection $\tilde{\nabla}$.
	
	We now show that
	\begin{equation*}
	\TT \curv = 0, \quad \TT \tors = 0, \quad \TT K = 0.
	\end{equation*}
	For $p,q\in M$, let $\gamma$ be a path connecting them. The horizontal lift $\tilde{\gamma}$ with respect to $\tilde{\nabla}$ from $u\in P_p$ to $v\in P_q$, can be regarded as the parallel transportation $T_pM \to T_qM$.  But since $v = \psi _* u$, for an element $\psi \in \mathcal{G}$, we have that the parallel transportation is exactly $\psi _*$. We have that $\psi_*$ preserves $K$ and the connection $\TT$ (and hence, its curvature and torsion) by construction. Therefore, $K$, $\tilde{R}$ and $\tilde{T}$ are invariant under parallel transportation and their covariant derivatives vanish.
	
	Conversely, given a linear connection $\TT$ such that $\TT \curv = 0, \, \TT \tors = 0, \, \TT K = 0$, let $\mathcal{G}$ the its Lie pseudo-group of local transvections. Since $\TT K=0$, the elements of $\mathcal{G}$ preserve $K$. Furthermore, see \cite[V. I, p. 262, Cor. 7.5]{KN1963}, $\mathcal{G}$ acts transitively.
	
	To finish the proof we only have to show the reductive condition. Let $(\tilde{P}(u_0) \to M, \THol(u_0))$ be the holonomy reduction of the frame bundle associated to $\TT$ and an element $u_0 \in \LM$. We first prove that $\mathcal{G}(p_0)$ acts transitively on $\tilde{P}(u_0)$, being $p_0 = \pi(u_0)$. Given $v\in \tilde{P}(u_0) $, there exists a horizontal curve connecting $u_0$ with $v$. The projection to $M$ of that curve can be regarded as a parallel transportation from $p_0$ to $q = \pi(v)$ that, in addition, preserves curvature and torsion. Hence by \cite[V. I, p. 261, Thm. 7.4]{KN1963} there exists a local transvection $\psi \in \mathcal{G}(p_0)$ from $p_0$ to $q$ such that $\psi _*$ is that parallel transportation. Therefore, $\varphi _* (u_0) = v$ and $\mathcal{G}(p_0)$ acts transitively on $\tilde{P}(u_0)$. By construction, $P(u_0)$ (see \eqref{Equation 3}) coincides with $\tilde{P}(u_0)$. In particular, $\THol(u_0) = \mathbf{H}(u_0)$ which is closed and the effective condition it is satisfied because Proposition \ref{Proposition RL3}.
	
	Finally, if we consider $T_{u_0} \tilde{P}(u_0) = \h + \m$, where $\m$ is the horizontal distribution of $\TT$. To prove that $\m$ is $\mathrm{Ad}(H(p_0))$-invariant, let $X =  \dt (\varphi_t)_* (u_0) \in \m$, $\varphi \mathcal{G}(p_0)$ such that $\varphi(p_0)=p_0 $ and $h = u_0^{-1} \circ (\varphi^{-1})_* \circ u_0 \in \mathbf{H}(u_0)$. We consider,
	\begin{align*}
	\Ad(H(p_0))_{\varphi}(X) &= \dt \varphi_* \circ (\varphi_t)_* \circ \varphi^{-1}_* (u_0) \\
	&= \dt \varphi_* \circ (\varphi_t)_* \circ u_0 \circ u_0^{-1} \circ \varphi^{-1}_* u_0 \\
	&= \dt R_h \circ \varphi_* \circ  (\varphi _t)_*(u_0) = (R_h \circ \varphi_*)_* (X).
	\end{align*}
	Hence, $(R_h \circ \varphi_*)_* (X)$ belongs to the horizontal distribution ($\m$), because affine transvections preserve the horizontal distribution.
\end{proof}

If we apply this last Theorem in the framework of Theorem \ref{Theorem AG7} above, we get the following result.

\begin{corollary}\label{Corollary RL8}
	Let $(M, K)$ be a differentiable manifold with an affine connection $\T$. Then the following assertions are equivalent:
	\begin{enumerate}
		\item The manifold $(M,K)$ is a reductive locally homogeneous space, associated to a Lie pseudo-group contained in $\mathrm{Aff}_{loc}(M,\T)$.
		\item There exists a connection $\TT$ such that:
		\begin{equation*}
		\TT \curv = 0, \quad \TT \tors = 0,\quad \TT S = 0, \quad \TT K = 0,
		\end{equation*}
		or
		\begin{equation*}
		\TT R = 0, \quad \TT T = 0,\quad \TT S = 0, \quad \TT K = 0,
		\end{equation*}
		where $R,T$ and $\curv, \tors$ are the curvature and torsion tensor of $\T$ and $\TT$, respectively, and $S= \T - \TT$ is the  tensor.
\end{enumerate}
\end{corollary}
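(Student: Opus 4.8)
The plan is to deduce the corollary from Theorem \ref{Theorem RL7}, adapting to the local setting the affine bookkeeping already carried out in the proof of Theorem \ref{Theorem AG7}. The guiding idea is that the tensor $S = \T - \TT$ should be absorbed into the geometric structure: passing from the datum $K$ to the enlarged datum $(K,S)$ converts statements about affine invariance of $\T$ into statements about invariance of an additional structure tensor, to which Theorem \ref{Theorem RL7} applies verbatim. I would organise the argument so that the only new ingredient beyond Theorem \ref{Theorem RL7} is the translation between affine transformations of $\T$ and of $\TT$.

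For the implication $(1)\Rightarrow(2)$, I would start from a Lie pseudo-group $\G \subset \mathrm{Aff}_{loc}(M,\T)$ acting transitively, effectively, closed and reductively on $(M,K)$, and run the construction in the first half of the proof of Theorem \ref{Theorem RL7}. This produces an AS-connection $\TT$ whose parallel transport from $p$ to $q$ is realised by some $\psi_*$ with $\psi \in \G$, and such that $\TT \curv = 0$, $\TT \tors = 0$, $\TT K = 0$. Since each such $\psi$ is by hypothesis an affine transformation of $\T$ and, by construction, preserves $\TT$, it also preserves $S = \T - \TT$. Hence $S$ is invariant under $\TT$-parallel transport, which yields $\TT S = 0$, giving the first displayed system.

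For the implication $(2)\Rightarrow(1)$, I would regard $S$ as part of the geometric structure and apply Theorem \ref{Theorem RL7} to the manifold $(M,(K,S))$: the hypotheses $\TT \curv = 0$, $\TT \tors = 0$, $\TT S = 0$, $\TT K = 0$ are exactly the AS-conditions for the enlarged structure. The theorem then produces a Lie pseudo-group $\G$ acting transitively, effectively, closed and reductively on $(M,(K,S))$, realised by the local transvections of $\TT$. These transvections preserve $\TT$ and, being part of the invariance statement for the enlarged structure, preserve $S$ as well; therefore they preserve $\T = S + \TT$, so $\G \subset \mathrm{Aff}_{loc}(M,\T)$, which is assertion $(1)$.

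Finally, I would establish the equivalence of the two displayed systems using the pointwise identities
$$ T_X Y - \tors_X Y = S_X Y - S_Y X, \qquad \curv_{XY} = R_{XY} + [S_X,S_Y] - S_{S_X Y - S_Y X}, $$
recorded in the proof of Theorem \ref{Theorem AG7}. Under the standing assumption $\TT S = 0$, every algebraic expression in $S$ is $\TT$-parallel, so $\TT(T - \tors) = 0$ and $\TT(\curv - R) = 0$; hence $\TT \tors = 0 \Leftrightarrow \TT T = 0$ and $\TT \curv = 0 \Leftrightarrow \TT R = 0$. The one genuinely delicate point, and the step I would scrutinise most carefully, is the claim in $(2)\Rightarrow(1)$ that the transvections of $\TT$ preserve $\T$: this is precisely why $S$ must be incorporated into the geometric structure \emph{before} invoking Theorem \ref{Theorem RL7}, rather than recovered afterwards, since the transvection pseudo-group is a priori defined only through $\TT$ and carries no information about $\T$ until $S$ is fixed as an invariant tensor.
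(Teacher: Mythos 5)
Your proposal is correct and follows essentially the same route as the paper, which derives the corollary precisely by applying Theorem \ref{Theorem RL7} to the enlarged structure $(K,S)$ in the framework of Theorem \ref{Theorem AG7}, using the same algebraic identities between $(R,T)$ and $(\curv,\tors)$ to pass between the two displayed systems. Your explicit treatment of why the transvection pseudo-group of $\TT$ lands in $\mathrm{Aff}_{loc}(M,\T)$ (via preservation of $S$ and $\TT$, hence of $\T=\TT+S$) is exactly the argument the paper leaves implicit.
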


\begin{definition}\label{Definition RL9}
	Let $(M,K, \T)$ a manifold endowed with a geometrical structure and an affine connection $\T$. We will say that $(M,K, \T)$ is a \emph{reductive locally homogeneous manifold with $\T$} if it is reductive locally homogeneous associated to a Lie pseudo-group contained in $\mathrm{Aff}_{loc}(M,\T)$.
\end{definition}


\section{AS-manifolds and Homogeneous Structures} \label{Section AS}
 
In the previous section, we have proven that locally homogeneous and reductive manifolds are AS-manifolds, and vice versa. We now study AS-manifolds from an infinitesimal, or even pointwise, point of view.

Let $V$ be a vector space of dimension $n$. Let
\begin{equation*}
	\curv: V \wedge V \to \mathrm{End}(V), \quad \tors: V \to \mathrm{End}(V),
\end{equation*}
be linear homomorphisms and let $K$ be a set of linear tensors on $V$. We will say that $(\curv,\tors)$ is an  \textit{infinitesimal model associated to $K$} if it satisfies
\begin{align}
	&\tors_XY +\tors_YX = 0,							\label{Equation 4}\\
	&\curv_{XY}Z + \curv_{YX}Z = 0,					\label{Equation 5}\\
	&\curv_{XY} \cdot \tors = \curv_{XY} \cdot \curv = 0,\label{Equation 6}\\
	&\SC{XYZ} \curv_{XY}Z + \tors_{\tors_XY}Z = 0,	\label{Equation 7}\\
	&\SC{XYZ} \curv_{\tors_XY Z} = 0,				\label{Equation 8}\\
	&\curv_{XY}\cdot K = 0,					\label{Equation 9}
\end{align}
where $\SC{XYZ}$ is the cyclic sum, and $\curv_{XY}$ acts in a natural way in the tensor algebra of $V$ as a derivation. In addition, we say that two infinitesimal model $(V, \curv,\tors)$ and $(V', \curv',\tors')$ are \emph{isomorphic} if there exists a linear isomorphism $f: V \to V'$ such that
\begin{equation}							\label{Equation 10}
f\, \curv = \curv', \quad f\, \tors = \tors', \quad f\, K = K'.
\end{equation}
This notion of infinitesimal model is a generalization of the one given, for example, in \cite{N1954, TL1993}.

\begin{theorem} \label{Theorem AS1}
	Given a point $p_0\in M$ of an AS-manifold $(M,K, \TT)$, then $(V = T_{p_0}M, \tors_{p_0}, \curv_{p_0})$ is an infinitesimal model associated to $K_{p_0}$, where $\curv$ and $\tors$ are the curvature and torsion of $\TT$.
\end{theorem}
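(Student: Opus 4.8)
Theorem AS1 proof planThe plan is to verify the six defining conditions \eqref{Equation 4}--\eqref{Equation 9} of an infinitesimal model one at a time at the point $p_0$, drawing throughout on the three parallelism hypotheses $\TT \curv = 0$, $\TT \tors = 0$, $\TT K = 0$ built into the definition of an AS-manifold, combined with the universal Bianchi and Ricci identities valid for an arbitrary linear connection. Since every expression appearing in \eqref{Equation 4}--\eqref{Equation 9} is tensorial, it suffices to check them for arbitrary extensions of $X, Y, Z \in T_{p_0}M$ to vector fields and then evaluate at $p_0$.

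The first two conditions are immediate and require no hypothesis: the skew-symmetries $\tors_X Y + \tors_Y X = 0$ and $\curv_{XY}Z + \curv_{YX}Z = 0$ of \eqref{Equation 4} and \eqref{Equation 5} hold by the very definition of the torsion and curvature of $\TT$. The two Bianchi-type relations \eqref{Equation 7} and \eqref{Equation 8} will then be obtained by feeding the parallelism hypotheses into the classical Bianchi identities for a connection with torsion. Recall that the first Bianchi identity reads $\SC{XYZ}\curv_{XY}Z = \SC{XYZ}\{(\TT_X \tors)(Y,Z) + \tors_{\tors_X Y}Z\}$; imposing $\TT \tors = 0$ annihilates the derivative term and leaves precisely \eqref{Equation 7}. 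Likewise, the second Bianchi identity $\SC{XYZ}\{(\TT_X \curv)(Y,Z) + \curv_{\tors_X Y,\,Z}\} = 0$ collapses, under $\TT \curv = 0$, to the endomorphism identity \eqref{Equation 8}.

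The remaining conditions \eqref{Equation 6} and \eqref{Equation 9}, namely the vanishing of the derivation actions $\curv_{XY}\cdot \tors$, $\curv_{XY}\cdot \curv$ and $\curv_{XY}\cdot K$, are consequences of the Ricci commutation formula. For any tensor field $S$ one has the identity $(\TT_X \TT_Y - \TT_Y \TT_X - \TT_{[X,Y]})S = -\curv_{XY}\cdot S$ (up to the sign convention), where $\curv_{XY}$ acts as a derivation on the full tensor algebra of $V$. Applying this with $S = \curv$, $S = \tors$ and $S = K$, and using that each of these tensors is $\TT$-parallel (so that $\TT_Y S = 0$, hence also $\TT_X\TT_Y S = 0$ and $\TT_{[X,Y]}S = 0$), forces the left-hand side to vanish identically. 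This yields $\curv_{XY}\cdot \curv = \curv_{XY}\cdot \tors = \curv_{XY}\cdot K = 0$ at $p_0$, which is exactly \eqref{Equation 6} and \eqref{Equation 9}.

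The argument is entirely pointwise, and I do not expect a genuine obstacle: every step is a routine specialization of a standard identity. The only real care needed is bookkeeping of sign and index conventions for $\curv$ and $\tors$, so that the Bianchi and Ricci identities align exactly with the displayed forms \eqref{Equation 7}, \eqref{Equation 8} and \eqref{Equation 6}; minor sign choices in the definitions of curvature or torsion will shift the placement of signs but not the substance. Collecting the six verifications and evaluating at $p_0$ then identifies $(T_{p_0}M, \tors_{p_0}, \curv_{p_0})$ as an infinitesimal model associated to $K_{p_0}$, completing the proof.
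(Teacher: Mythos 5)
Your proposal is correct and takes essentially the same approach as the paper's own (much terser) proof: skew-symmetry gives \eqref{Equation 4}--\eqref{Equation 5}, the Bianchi identities combined with $\TT\tors=0$ and $\TT\curv=0$ give \eqref{Equation 7}--\eqref{Equation 8}, and the parallelism of $\curv$, $\tors$, $K$ gives \eqref{Equation 6} and \eqref{Equation 9}. Your explicit use of the Ricci commutation formula is precisely what the paper leaves implicit when it says that these last equations ``come from'' $\TT\curv=0$, $\TT\tors=0$, $\TT K=0$.
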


\begin{proof}
	Let $(M,K, \TT)$ be an AS-manifold. It satisfies
	\begin{equation*}
	\TT \curv = 0, \quad \TT \tors = 0, \quad \TT K = 0,
	\end{equation*}
	given a point $p_0\in M$ and we recall $V=T_{p_0}M$,  $\curv_0 = \curv_{p_0}$, $ \tors_0 = \tors_{p_0}$ and $K_0=K_{p_0}$, hence, $(\curv_0,\tors_0)$ is an infinitesimal model. Indeed,  we deduce \eqref{Equation 4} and \eqref{Equation 5} from the skew-symmetric definition of torsion and curvature. Equations \eqref{Equation 6} and \eqref{Equation 9} come from $\TT \curv = 0, \, \TT \tors = 0,\, \TT K = 0$. Finally, equations \eqref{Equation 7} and \eqref{Equation 8} are the Bianchi identities.
\end{proof}

Note that, Theorem \ref{Theorem AS1} provides an infinitesimal model for every point in an AS-manifolds. Now, we show it does not matter the chosen point $p_0\in M$. Indeed,
\begin{theorem}\label{Theorem AS2}
	Let $(M , K, \TT)$ be an AS-manifold. Given two different points $p_0$, $p_1 \in M$ their associated infinitesimal models are isomorphic.
\end{theorem}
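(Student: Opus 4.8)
The plan is to show that parallel transport along any path joining $p_0$ to $p_1$ provides the desired isomorphism of infinitesimal models. Concretely, let $\gamma$ be a smooth curve from $p_0$ to $p_1$ (which exists since an AS-manifold is connected; this follows from the transitivity of the associated Lie pseudo-group established in Theorem \ref{Theorem RL7}), and let $f = \tau_\gamma: V = T_{p_0}M \to T_{p_1}M = V'$ denote $\TT$-parallel transport along $\gamma$. Being a parallel transport of a linear connection, $f$ is automatically a linear isomorphism, so the only task is to verify the three intertwining relations \eqref{Equation 10}, namely $f\,\tors_{p_0} = \tors_{p_1}$, $f\,\curv_{p_0} = \curv_{p_1}$ and $f\,K_{p_0} = K_{p_1}$, where $f$ is understood to act on $\tors$, $\curv$ and $K$ as the induced map on the relevant tensor spaces over $V$ and $V'$.

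First I would recall the standard fact that for any tensor field $W$ on $M$, the condition $\TT W = 0$ is equivalent to $W$ being invariant under $\TT$-parallel transport along every path. Indeed, if $\tau_{\gamma(s)}$ denotes parallel transport from $p_0$ to $\gamma(s)$, then $\TT_{\dot\gamma} W = 0$ says exactly that $s \mapsto \tau_{\gamma(s)}^{-1}(W_{\gamma(s)})$ is constant in $s$, so its value at $s=1$ equals its value at $s=0$; this is just the defining ODE for parallel transport applied to the tensor $W$. Applying this in turn to $W = \tors$, $W = \curv$ and $W = K$, and using that the AS-manifold hypothesis gives precisely $\TT\tors = 0$, $\TT\curv = 0$ and $\TT K = 0$, yields that each of these tensors is preserved by $\tau_\gamma$. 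Translating ``preserved by parallel transport'' into the language of the induced map $f$ on tensor spaces gives exactly the relations $f\,\tors_{p_0} = \tors_{p_1}$, $f\,\curv_{p_0} = \curv_{p_1}$ and $f\,K_{p_0} = K_{p_1}$.

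There is no serious obstacle here: the result is essentially a repackaging of the parallel-invariance of $\tors$, $\curv$ and $K$, which is already the content of the AS-conditions. The only point requiring a modicum of care is notational, namely checking that the naive action of the pointwise isomorphism $f$ on multilinear objects agrees with the action of parallel transport extended as an algebra homomorphism on the full tensor algebra; this is routine once one fixes the convention that $f$ acts on covariant slots via $(f^{-1})^*$ and on contravariant slots via $f$, matching the way $\curv_{XY}$ was declared to act as a derivation on the tensor algebra in the paragraph preceding \eqref{Equation 4}. I would also note that the isomorphism class is independent of the chosen path $\gamma$, although this is not needed for the statement as phrased. Hence the two infinitesimal models $(V, \tors_{p_0}, \curv_{p_0})$ and $(V', \tors_{p_1}, \curv_{p_1})$ are isomorphic in the sense of \eqref{Equation 10}, completing the argument.
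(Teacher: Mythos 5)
Your proof is correct, but it takes a genuinely more elementary route than the paper. The paper invokes \cite[Vol. 1, p. 262, Cor. 7.5]{KN1963}: since $\TT \curv = 0$ and $\TT \tors = 0$, the local affine transformations of $(M,\TT)$ act transitively, so there is a local affine diffeomorphism $\varphi$ with $\varphi(p_0)=p_1$, and the isomorphism is $f=\varphi_{*,p_0}$; preservation of $\tors$ and $\curv$ is then automatic for an affine map, while preservation of $K$ is deduced from $\TT K=0$. Note that this last step tacitly uses that, in the Kobayashi--Nomizu construction, $\varphi_{*,p_0}$ is exactly a parallel transport: a general affine transformation need \emph{not} preserve a parallel tensor field (on flat $\R^n$ every linear map is affine, but it moves a parallel vector field). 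You bypass the integration theorem entirely and take $f$ to be $\TT$-parallel transport along a curve, using only the elementary equivalence ``$\TT W=0$ if and only if $W$ is invariant under parallel transport''. Since Kobayashi--Nomizu's affine map is built precisely so that its differential at $p_0$ is the parallel transport, the two isomorphisms coincide; your argument is self-contained and is actually cleaner on the $K$-preservation step, whereas the paper's heavier route has the advantage of producing the stronger local affine equivalence that is reused immediately afterwards (Theorem \ref{Theorem AS3} and Definition \ref{Definition AS4}). One caveat: your parenthetical justification of connectedness is faulty. Transitivity of a pseudo-group of \emph{local} diffeomorphisms does not force $M$ to be connected (its elements may interchange components), and the theorem is simply false for disconnected $M$ (take a disjoint union of two AS-manifolds with non-isomorphic models). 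Connectedness is a standing implicit hypothesis here, exactly as in the paper's appeal to \cite{KN1963}, where manifolds are connected by convention; granted that, your path exists and the rest of your argument goes through.
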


\begin{proof}
	By \cite[Vol. 1, p. 262, Cor. 7.5]{KN1963}, there exists a locally affine transformation $\varphi$ sending $p_0$ to $p_1$. Because of being affine, we have that $\varphi_*$ is a linear isomorphism between $T_{p_0}M$ and $T_{p_1}M$ satisfying $\varphi_* \tors_{p_0} = \tors_{p_1}$ and $\varphi_* \curv_{p_0} = \curv_{p_1}$. By $\TT K = 0$, we conclude that $\varphi_* K_{p_0} = K_{p_1}$.
\end{proof}

Hence, associated to any AS-manifold there exists an unique infinitesimal model up to isomorphism. Furthermore, when different manifolds have isomorphic associated infinitesimal models, from \cite[Vol. 1, p.261, Thm. 7.4]{KN1963} we get the following result.

\begin{theorem}\label{Theorem AS3}
	Let $(M , K, \TT)$ and $(M', K', \TT')$ be two AS-manifold and let $p_0 \in M$ and $p_0' \in M'$ be two points, such that their associated infinitesimal models are isomorphic. Then there exists a local affine diffeomorphism between $p_0$ and $p'_0$ sending to $K_{p_0}$ to $K'_{p'_0}$.
\end{theorem}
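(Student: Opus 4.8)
The plan is to transport the infinitesimal isomorphism between the two models to a genuine local map by invoking \cite[Vol. 1, p. 261, Thm. 7.4]{KN1963}, and then to check separately that the resulting affine diffeomorphism also respects the structure tensor $K$. By hypothesis the infinitesimal models at $p_0$ and $p'_0$ are isomorphic, so by \eqref{Equation 10} there is a linear isomorphism $f: T_{p_0}M \to T_{p'_0}M'$ with $f\,\tors_{p_0} = \tors'_{p'_0}$, $f\,\curv_{p_0} = \curv'_{p'_0}$ and $f\,K_{p_0} = K'_{p'_0}$. This $f$ is the data to be fed into the cited theorem.

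First I would verify that the hypotheses of \cite[Vol. 1, p. 261, Thm. 7.4]{KN1963} are met. That result produces a local affine diffeomorphism out of a pointwise linear isomorphism matching curvature, torsion and all their covariant derivatives. Since $(M,K,\TT)$ and $(M',K',\TT')$ are AS-manifolds we have $\TT\curv = 0$, $\TT\tors = 0$ and likewise $\TT'\curv' = 0$, $\TT'\tors' = 0$; hence every covariant derivative of $\curv$, $\tors$, $\curv'$, $\tors'$ vanishes, and the single conditions $f\,\curv_{p_0} = \curv'_{p'_0}$, $f\,\tors_{p_0} = \tors'_{p'_0}$ already encode the matching of the full jets. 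Applying the theorem then yields a local affine diffeomorphism $\varphi$ on a neighbourhood of $p_0$, with $\varphi(p_0) = p'_0$ and $\varphi_{*,p_0} = f$. Being affine, $\varphi$ carries $\TT$ to $\TT'$ and consequently $\varphi_*\curv = \curv'$, $\varphi_*\tors = \tors'$ on the whole neighbourhood.

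It remains to see that $\varphi$ sends $K$ to $K'$, the only ingredient the cited theorem does not supply. At the base point this is immediate, as $\varphi_{*,p_0} = f$ and $f\,K_{p_0} = K'_{p'_0}$. To propagate it I would use that $K$ and $K'$ are parallel (from $\TT K = 0$ and $\TT' K' = 0$) together with the fact that an affine map intertwines parallel transport: writing $\tau$ and $\tau'$ for parallel transport of $\TT$ and $\TT'$ on the tensor algebra, one has $\varphi_*\circ \tau_\gamma = \tau'_{\varphi\circ\gamma}\circ\varphi_*$ for any curve $\gamma$ from $p_0$ to a nearby point $q$. Then
\begin{equation*}
\varphi_* K_q = \varphi_* \tau_\gamma K_{p_0} = \tau'_{\varphi\circ\gamma}\, \varphi_* K_{p_0} = \tau'_{\varphi\circ\gamma}\, K'_{p'_0} = K'_{\varphi(q)},
\end{equation*}
so that $\varphi_* K = K'$ on the neighbourhood, and in particular $\varphi$ sends $K_{p_0}$ to $K'_{p'_0}$.

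The hard part will be the correct application of \cite[Vol. 1, p. 261, Thm. 7.4]{KN1963}: one must confirm that its hypotheses—matching of curvature, torsion and all their covariant derivatives under $f$—are genuinely satisfied, and here the AS-conditions $\TT\curv = \TT\tors = 0$ do the essential work by collapsing the infinite-jet requirement to the single pointwise condition coming from the infinitesimal-model isomorphism. Everything else is the bookkeeping above for the extra tensor $K$, handled by the parallel-transport identity.
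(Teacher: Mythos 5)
Your proposal is correct and follows essentially the same route as the paper, which proves this theorem simply by invoking \cite[Vol.\ 1, p.\ 261, Thm.\ 7.4]{KN1963} with the linear isomorphism $f$ supplied by the model isomorphism \eqref{Equation 10}. Your additional verification that $\varphi_* K = K'$ on a whole neighbourhood, via parallelism of $K$, $K'$ and the compatibility of affine maps with parallel transport, is a correct filling-in of a step the paper leaves implicit (the statement itself only needs the pointwise identity $\varphi_{*,p_0}K_{p_0}=K'_{p'_0}$, which is immediate from $\varphi_{*,p_0}=f$).
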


So, we define the notion of AS-isomorphism between AS-manifolds.

\begin{definition} \label{Definition AS4}
		Let $(M , K, \TT)$ and $(M', K', \TT')$ be two AS-manifold and let $p_0 \in M$ and $p_0' \in M'$ be two point. We say $(M , K, \TT)$ and $(M', K', \TT')$ are AS-isomorphic if there exists a local affine diffeomorphism between $p_0$ and $p_0'$ sending $K$ to $K'$.
\end{definition}

From every infinitesimal model $(\curv, \tors)$ on $V$ associated to $K$, we can construct a transitive Lie algebra using the so-called \textit{Nomizu construction}, see \cite{N1954}. Let
\begin{equation}\label{Nomizu construction}
\g_0 = V \oplus \h_0,
\end{equation}
where $\h_0 = \{ A \in \mathfrak{end}(V): A \cdot \curv = 0,\, A \cdot \tors = 0,\, A \cdot K = 0 \}$, equipped with the Lie bracket
\begin{equation}\label{Nomizu corchetes}
	\begin{alignedat}{3}
		&[A,B] = AB -BA, &A,B \in \h_0, \\
		&[A,X] = AX, &A \in \h_0, \, X\in V,  \\
		&[X,Y] = -\tors_XY +\curv_{XY}, &X,Y \in V.
	\end{alignedat}
\end{equation}

Alternatively, we can also consider the so-called \textit{transvection algebra} $\g_0 '= V \oplus \h_0' $ \cite{K1990}, where $\h_0'$ is the lie algebra of endomorphism generated by $\curv_{XY}$ with $X,Y \in V$, equipped with brackets as above. In particular, this Lie algebra coincides with the holonomy algebra of the connection $\TT$. Then we have shown that every infinitesimal model has  Nomizu and transvection constructions.

Two Nomizu constructions $(\g_0, \h_0, \tors, \curv, K)$ and $(\g_0',\h_0', \tors', \curv', K')$ are isomorphic if there exists a Lie algebra isomorphism $F: \g_0 \to \g_0'$ such that $F (V)= V'$, $F$ sends $K$ to $K'$ and $F(\h_0) = \h_0'$.

\begin{proposition}\label{Proposition AS5}
	Two infinitesimal model are isomorphic if and only if their Nomizu constructions are isomorphic.
\end{proposition}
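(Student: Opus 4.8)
The plan is to show that an isomorphism of infinitesimal models and an isomorphism of the associated Nomizu constructions determine each other through the splitting $\g_0 = V \oplus \h_0$, the point being that the $V$-component carries all of the tensorial data $(\tors,\curv,K)$ while the $\h_0$-component is forced to be conjugation. Throughout I read the conditions \eqref{Equation 10} as $f(\tors_XY)=\tors'_{fX}(fY)$, $f\,\curv_{XY}\,f^{-1}=\curv'_{fX,fY}$, and $f K = K'$ for the action induced by $f$ on the tensor algebra (denoted $f_*$ below).

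First I would treat the forward implication. Given an isomorphism $f:V\to V'$ of infinitesimal models, define $F:\g_0\to\g_0'$ by $F|_V=f$ and $F(A)=fAf^{-1}$ for $A\in\h_0$. The first thing to verify is that $F(\h_0)\subseteq\h_0'$; this rests on the naturality of the derivation action, namely $f_*(A\cdot T)=(fAf^{-1})\cdot(f_*T)$ for every tensor $T$ on $V$, so that $A\cdot\curv=A\cdot\tors=A\cdot K=0$ together with $f_*\curv=\curv'$, $f_*\tors=\tors'$, $f_*K=K'$ force $(fAf^{-1})\cdot\curv'=(fAf^{-1})\cdot\tors'=(fAf^{-1})\cdot K'=0$. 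Applying the same argument to $f^{-1}$ shows conjugation is a bijection $\h_0\to\h_0'$, so $F$ is a linear isomorphism preserving the splitting. It then remains to check the three bracket relations in \eqref{Nomizu corchetes}: the cases $[A,B]$ and $[A,X]$ are immediate from the definition of conjugation, while $[X,Y]=-\tors_XY+\curv_{XY}$ reduces precisely to $f\tors=\tors'$ on the $V'$-component and to $f\curv=\curv'$ on the $\h_0'$-component.

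For the converse, let $F:\g_0\to\g_0'$ be a Nomizu isomorphism with $F(V)=V'$, $F(\h_0)=\h_0'$, and $F$ sending $K$ to $K'$, and set $f=F|_V$, a linear isomorphism $V\to V'$; then $fK=K'$ is just the hypothesis that $F$ sends $K$ to $K'$. To recover the remaining data I would first exploit the relation $[A,X]=AX$: applying $F$ gives $(FA)(fX)=f(AX)$ for all $A\in\h_0$, $X\in V$, whence $FA=fAf^{-1}$, so $F|_{\h_0}$ is \emph{forced} to be conjugation by $f$. Next, applying $F$ to $[X,Y]=-\tors_XY+\curv_{XY}$, comparing with $[fX,fY]=-\tors'_{fX}fY+\curv'_{fX,fY}$, and separating components using the directness of $\g_0'=V'\oplus\h_0'$, yields $f(\tors_XY)=\tors'_{fX}fY$ from the $V'$-part and $F(\curv_{XY})=\curv'_{fX,fY}$ from the $\h_0'$-part; combining the latter with $F(\curv_{XY})=f\curv_{XY}f^{-1}$ gives $f\curv=\curv'$. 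Hence $f$ is an isomorphism of infinitesimal models.

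The bilinearity and the bracket identities for $[A,B]$ and $[A,X]$ are mechanical, and the essential algebraic content in both directions is the separation of a single bracket relation into its $V$- and $\h_0$-components. I expect the one genuinely delicate point to be the bookkeeping of the derivation action on $\curv$, $\tors$ and $K$: establishing the naturality identity $f_*(A\cdot T)=(fAf^{-1})\cdot(f_*T)$ and using it consistently is what guarantees that the defining conditions of $\h_0$ transport correctly to $\h_0'$ in the forward direction, and everything else follows from the $[A,X]=AX$ and $[X,Y]$ relations.
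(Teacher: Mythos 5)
Your proof is correct and takes essentially the same route as the paper: in the forward direction you extend $f$ to $\g_0$ by conjugation on $\h_0$, and in the converse you set $f=F|_V$ and recover $f\,\tors=\tors'$ and $f\,\curv=\curv'$ by splitting the image of the bracket $[X,Y]=-\tors_XY+\curv_{XY}$ into its $V'$- and $\h_0'$-components. You merely make explicit two points the paper leaves implicit, namely the naturality identity $f_*(A\cdot T)=(fAf^{-1})\cdot(f_*T)$ guaranteeing $F(\h_0)=\h_0'$, and the fact that $[A,X]=AX$ forces $F|_{\h_0}$ to be conjugation.
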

\begin{proof}
	Suppose that $V$ and $V'$ are two vector space with two infinitesimal models $(\curv,\tors)$ and $(\curv',\tors')$. Then there is an isomorphism $f: V \to V'$ such that satisfies \eqref{Equation 10}. We thus consider $\tilde{f}: \g_0 \to \g_0'$ such that $\tilde{f}|_V = f$ and $\tilde{f}|_{\h_0}(A)=  f \circ A  \circ f^{-1}$.
	
	Conversely, given a Lie algebra homomorphism $F: \g_0 \to \g_0'$ such that $F(V)= V'$ and $F(\h_0) = \h_0'$, then $f = F|_V$ is the isomorphism. Indeed, by definition $f$ sends $K$ to $K'$ and, taking into account that $F$ is a Lie algebra morphism, we obtain that $f$ sends $\curv$  to $\curv'$ and $\tors$ to $\tors'$.
\end{proof}

Surprisingly, the converse is no true: two different Nomizu constructions could give rise to the same Lie algebra, see \cite[p. 36]{L2014}.

Summarizing, we have proved that there exists a morphism from the class of AS-manifolds to the class of infinitesimal models. Moreover, every infinitesimal model has associated a Nomizu construction. Now, we prove the main theorem of the section, which shows that the morphism is surjective. Note that obviously it can not be injective. The proof of this result for Riemannian manifolds can be found in \cite{TL1993}.

\begin{theorem} \label{Modelo infinitesimal implica existencia de AS}
	Let $V$ be a vector space and $(\curv_0,\tors_0)$ an infinitesimal model associated to tensors $K_0$. Then, there is an AS-manifold $M$ with a geometrical structure defined by the tensor field $K$ and a point $p_0 \in M$ such that
	\begin{equation*}
		K_{p_0}= K_0,
	\end{equation*}
	and the curvature $\curv$ and torsion $\tors$ of the AS-connection $\TT$ verify that $\curv _{p_0} = \curv_0$ and $\tors _{p_0} = \tors_0$.

Any other manifold satisfying all this is locally affine diffeomorphic to $(M,\TT)$.
\end{theorem}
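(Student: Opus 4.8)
The plan is to realize the given data through the Nomizu construction \eqref{Nomizu construction}--\eqref{Nomizu corchetes} and then to identify the resulting canonical connection. First I would check that $\g_0 = V \oplus \h_0$ with the brackets \eqref{Nomizu corchetes} is genuinely a Lie algebra. Antisymmetry is immediate, and for the bracket to take values in $\g_0$ one needs $\curv_{XY}\in\h_0$, which is exactly \eqref{Equation 6} together with \eqref{Equation 9}. The Jacobi identity then splits into cases according to how many arguments lie in $V$: three elements of $\h_0$ give the associativity of the commutator; the mixed cases with one or two arguments in $\h_0$ reduce to the derivation conditions $A\cdot\tors = 0$ and $A\cdot\curv = 0$ built into the definition of $\h_0$; and for three elements of $V$ the $V$-component and the $\h_0$-component of the cyclic sum reproduce precisely the two Bianchi-type identities \eqref{Equation 7} and \eqref{Equation 8}. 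Thus the infinitesimal model axioms are exactly what make $\g_0$ a Lie algebra.

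Next I would integrate $\g_0$ to a connected, simply connected Lie group $G$ and let $H_0\subset G$ be the connected Lie subgroup with Lie algebra $\h_0$. Because $[A,X]=AX\in V$ for $A\in\h_0$ and $X\in V$, we have $[\h_0,V]\subseteq V$, so $V$ is $\Ad(H_0)$-invariant by connectedness of $H_0$; hence $\g_0 = \h_0\oplus V$ is a reductive decomposition with $V$ playing the role of $\m$. Here I expect the main obstacle: $H_0$ need not be closed in $G$, so the quotient $G/H_0$ may fail to be a manifold. I would resolve this by staying in the local framework of Section \ref{Section RL}: a slice transverse to the $H_0$-orbits through $e$ produces a neighbourhood $M$ of $p_0 := eH_0$ that is a genuine manifold carrying the local transitive action, which is all that the notion of AS-manifold requires. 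On $M$ I place the canonical connection $\TT$ of the reductive decomposition.

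I would then transport the tensor $K_0$. Setting $K_{p_0}=K_0$ and spreading it by the local action of $G$ yields a well-defined tensor field $K$ precisely because every $A\in\h_0$ satisfies $A\cdot K = 0$, so $K_0$ is fixed by the isotropy and, $H_0$ being connected, by $\Ad(H_0)$. By construction $K$, as well as the curvature $\curv$ and torsion $\tors$ of $\TT$, are locally $G$-invariant; Proposition \ref{Proposition AG1} then gives $\TT K = 0$, $\TT\curv = 0$ and $\TT\tors = 0$, so that $(M,K,\TT)$ is an AS-manifold.

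Finally I would match the germ at $p_0$ and conclude uniqueness. The standard expressions for the torsion and curvature of the canonical connection at the origin (\cite{KN1963}, Vol. II, Ch. X) read $\tors_{p_0}(X,Y) = -[X,Y]_V$ and $\curv_{p_0}(X,Y)Z = -[[X,Y]_{\h_0},Z]$ for $X,Y,Z\in V$; substituting the brackets \eqref{Nomizu corchetes}, for which $[X,Y]_V = -\tors_0(X,Y)$ and $[[X,Y]_{\h_0},Z] = \curv_0(X,Y)Z$, gives $\tors_{p_0} = \tors_0$ and $\curv_{p_0} = \curv_0$ up to the sign conventions fixed in \eqref{Nomizu corchetes}, while $K_{p_0}=K_0$ holds by construction. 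For the last assertion, any AS-manifold realizing the same data has, at its base point, an infinitesimal model isomorphic to $(V,\curv_0,\tors_0,K_0)$, so Theorem \ref{Theorem AS3} supplies a local affine diffeomorphism identifying the two and carrying one structure tensor to the other, which is exactly the claimed local affine equivalence (the Riemannian prototype of this argument being \cite{TL1993}).
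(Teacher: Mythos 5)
Your proposal is correct, and its skeleton coincides with the paper's: both proofs build the Nomizu algebra $\g_0 = V \oplus \h_0$ from \eqref{Nomizu construction}, integrate it to the connected, simply connected group $G$, and realize $M$ as a small transversal through the identity (the paper's immersion $f:W\subset V\to G$, $f(y)=\phi^{-1}(y,0,\dots,0)$, is exactly your ``slice transverse to the $H_0$-orbits''). Where you genuinely diverge is in how the connection is produced and its properties verified. The paper never introduces the subgroup $H_0$ nor any quotient, even locally: it defines $\TT$ directly by the $\h_0$-valued connection form $\tilde{\omega}^i_j=\sum_k\theta^i(A_k(e_j))\,\omega^k$ (the $\h_0$-component of the Maurer--Cartan form acting on $V$) pulled back by $f$, extends $\curv_0,\tors_0,K_0$ as left-invariant tensors, reads off parallelism from the fact that $\tilde{\omega}$ takes values in $\h_0$, and identifies the curvature and torsion of $\TT$ with $\curv$ and $\tors$ through the Cartan structure equations, which follow from the brackets \eqref{Nomizu corchetes}. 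You instead quote the canonical-connection package: Proposition \ref{Proposition AG1} for parallelism and the formulas $\tors_{p_0}(X,Y)=-[X,Y]_V$, $\curv_{p_0}(X,Y)Z=\pm[[X,Y]_{\h_0},Z]$ from \cite{KN1963}. Those results are stated for global reductive quotients $G/H$ with $H$ closed, so each one needs a (routine but unstated) germ-level version before it applies to your local quotient; the paper's form-theoretic route is self-contained precisely because it sidesteps this issue --- non-closedness of $H_0$ never even arises. Your sign bookkeeping does come out right: the paper's curvature convention is the negative of the one in \cite{KN1963}, which is what turns $-[[X,Y]_{\h_0},Z]$ into $+\curv_{0,XY}Z$ and makes $\curv_{p_0}=\curv_0$ exact rather than ``up to sign.'' On the credit side, your proposal supplies two things the paper omits: the verification that the axioms \eqref{Equation 4}--\eqref{Equation 9} are exactly the Jacobi identities for \eqref{Nomizu corchetes} (delegated in the paper to \cite{N1954}), and an explicit proof of the final uniqueness sentence via Theorem \ref{Theorem AS3}, which the paper's proof leaves implicit; the paper, in exchange, gives a construction in the elementary style of \cite{TL1993} that requires no theory of local homogeneous spaces at all.
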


\begin{proof}
	From the model $(\curv_0, \tors_0)$ on $V$ associated to $K_0$, we can construct its transitive Lie algebra using the Nomizu construction, $\g_0 = V \oplus \h_0$ (see \eqref{Nomizu construction}).  We consider a basis $\{e_1,...,e_n\}$ of $V$ and a basis $\{A_1, ... , A_m\}$ of $\h_0$ and, respectively, we take its dual basis $\{\theta^1, ... , \theta^n\}$ and $\{ \omega^1, ..., \omega^m\}$. Let $G$ be the connected, simply connected Lie group associated to $\g_0$. Indeed, the vector elements $e_i$ and $A_k$ and the 1-forms $\theta^i$ and $\omega^k$ of $\g_0$ can be regarded as left-invariant vector fields and 1-forms on $G$.
	
	Let $\phi = (x^1, ... , x^{n+m}): U \subset G \to V \subset \R^{n+m}$ a local coordinate system defined on a neighbourhood $U$ of the identity $e$ of $G$ such that,
	\begin{equation}\label{diferenciales de la carta}
		dx^i |_e = \theta^i|_e, \quad ,i\in\{1,...,n\}.
	\end{equation}
	We now consider the immersion map $f: W \subset V \to G$ given by $f(y^1, ..., y^n) = \phi^{-1}(y^1, ...,y^n, 0, ..., 0)$ where $W$ is an open neighbourhood of  $0\in \R^n$. We define,
	\begin{equation*}
		\tilde{\theta}^i = f^*(\theta^i), \quad \tilde{e}_i = f^*(e_i), \quad i=1,...,n.
	\end{equation*}
	Because of \eqref{diferenciales de la carta}, the 1-forms $\tilde{\theta}^1,..., \tilde{\theta}^n$ are linearly independent at $0$. Then, let $M \subset W$ be the open neighbourhood of $0$ such that $\tilde{\theta}^1,..., \tilde{\theta}^n$ are linearly independent at each point of $M$.
	
	We consider $\omega_j^i = \sum_{k=1}^{m} \theta^i(A_k (e_j)) \omega^k$ and $\tilde{\omega}_j^i $ its pull back to $M$. Let $\TT$ be the linear connection whose connection form is $\tilde{\omega} = (\tilde{\omega}_j^i)$. We now consider the pull-back to $M$ of the extension $\curv$, $\tors$ and $K$ of the tensor elements $\curv_0$, $\tors_0$ and $K_0$ to the Lie group $G$. 	
	Note that $\tilde{\omega}$ takes values in $\h_0$. Therefore, by definition of $\h_0$, we have $\omega(X) \cdot \curv = 0$, $\omega(X) \cdot \tors = 0$ and $\omega(X) \cdot K = 0$. Indeed, this last identities mean that $\TT$ makes parallel $\curv$, $\tors$ and $K$.
	
	To end, we need to prove that $\curv$ and $\tors$ are the curvature and torsion tensors of $\TT$. First, applying the definition of the exterior differential and \eqref{Nomizu corchetes}, we have that,
	\begin{align*}
		d \theta^i + \omega_j^i \wedge \theta^j &= \frac{1}{2} \sum_{j,k=1}^n \theta^i((\tors_0)(e_j,e_h)) \theta^j \wedge \theta^h\\
		d \omega_j^i + \sum_{k=1}^{n} \omega_k^i \wedge \omega_j^k &= \frac{1}{2} \sum_{h,k=1}^n \theta^k((\curv_0)(e_i,e_j,e_h)) \theta^h \wedge \theta^k.
	\end{align*}
	If we pull-back these two equations to $M$, they are the two structural equations for the connection form $\omega$. Therefore, the curvature and torsion of $\TT$ are the tensors whose components in the basis $\{\theta^1, ... , \theta^n\}$ are constants $\tilde{\theta}^i((\tors_0)(\tilde{e}_j,\tilde{e}_h))$ and
	$\theta^k((\curv_0)(\tilde{e}_i,\tilde{e}_j,\tilde{e}_h))$, respectively. Finally, the curvature and torsion of $\TT$ coincide with $\curv$ and $\tors$.
\end{proof}

We finally consider the particular case where $M$ is a manifold with a geometric structure $K$ equipped with a connection $\T$ and an AS-connection $\TT$ such that,
\begin{equation*}
\TT \curv = 0, \quad \TT \tors = 0,\quad \TT S = 0,\quad \TT K = 0,
\end{equation*}
where $S= \T- \TT$. So we can consider,
\begin{equation*}
T_{X}Y = \tilde{T}_{X}Y + S_{X}Y -S_{Y}X, \quad  R_{XY} = \tilde{R}_{XY} + [S_Y, S_X] - S_{S_{Y}X-S_{X}Y}.
\end{equation*}
where $R$ and $T$ are the curvature and torsion of $\T$.

\begin{corollary}\label{Corollary AS7}
	Let $(M, S, K, \TT)$ and $(M', S', K', \TT')$ be two AS-manifolds with homogeneous structures $S$ and $S'$. Then, there exists an AS-isomorphism between $M$ and $M'$ if and only if there exists an affine local diffeomorphism between $(M,\T)$ and $(M', \T')$ sending $S$ to $S'$ and $K$ to $K'$.
\end{corollary}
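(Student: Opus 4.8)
The plan is to reduce everything to the additive relation $\T = \TT + S$ and to the way the difference of two connections transforms under a diffeomorphism; no deep input is needed. The crucial preliminary observation is that, by the conventions fixed just before the corollary, the homogeneous structure $S = \T - \TT$ is itself one of the tensors defining the geometric structure of the AS-manifold $(M, S, K, \TT)$, since it satisfies $\TT S = 0$ alongside $\TT\curv = 0$, $\TT\tors = 0$ and $\TT K = 0$. Unwinding Definition \ref{Definition AS4} with the structure tensor taken to be the pair $(S, K)$, an AS-isomorphism between $(M, S, K, \TT)$ and $(M', S', K', \TT')$ is therefore exactly a local diffeomorphism $\varphi$ that is affine with respect to the AS-connections $\TT$ and $\TT'$ and that satisfies $\varphi_* S = S'$ and $\varphi_* K = K'$.

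First I would record the elementary pushforward identity. Writing $\varphi_*\nabla$ for the connection transported by a local diffeomorphism $\varphi$ — so that $\varphi$ is affine from $(M,\nabla)$ to $(M',\nabla')$ precisely when $\varphi_*\nabla = \nabla'$ — and using that $S$ is a $(1,2)$-tensor, one checks directly that $\varphi_*\T = \varphi_*\TT + \varphi_* S$, where $\varphi_* S$ denotes the ordinary tensor pushforward; the same holds on $M'$ with $\T' = \TT' + S'$. Both implications are then immediate. In the forward direction, an AS-isomorphism $\varphi$ gives $\varphi_*\TT = \TT'$ and $\varphi_* S = S'$, whence $\varphi_*\T = \TT' + S' = \T'$, so $\varphi$ is affine for $\T, \T'$ and intertwines $S\mapsto S'$ and $K\mapsto K'$. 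In the converse direction, an affine local diffeomorphism for $\T,\T'$ with $\varphi_* S = S'$ and $\varphi_* K = K'$ yields, upon subtracting $\varphi_* S = S'$ from $\varphi_*\T = \T'$ and using $\TT = \T - S$, the identity $\varphi_*\TT = \TT'$, so $\varphi$ is an AS-isomorphism.

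I do not anticipate a genuine obstacle, as the entire content is the bookkeeping around $\T = \TT + S$ and its compatibility with pushforward. The single point demanding care is the identification of $S$ as a bona fide structure tensor of the AS-manifold, which is what forces an AS-isomorphism to carry $S$ to $S'$; without this, affineness with respect to $\TT$ alone would not determine the behaviour of $S$ and the equivalence would break down. One could alternatively deduce the statement from Theorem \ref{Theorem AS3} applied to the enlarged structure $(S, K)$, but the direct computation above is shorter and more transparent.
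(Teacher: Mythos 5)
Your proof is correct and follows exactly the route the paper intends: the paper states Corollary \ref{Corollary AS7} without an explicit proof, taking it to follow from Definition \ref{Definition AS4} applied to the enlarged structure $(S,K)$ together with the relation $\T = \TT + S$, which is precisely the bookkeeping you carry out. Your identification of $S$ as a genuine structure tensor of the AS-manifold (forced by $\TT S = 0$), and the pushforward identity $\varphi_*\T = \varphi_*\TT + \varphi_*S$ giving both implications, is the intended argument made explicit.
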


Given a fixed point $p_0\in M$, by Theorem \ref{Theorem AS1}, we can consider an infinitesimal model $(T_{p_0}M,\curv_{p_0}, \tors_{p_0})$ associated to $S_{p_0}$ and $K_{p_0}$ with
\begin{align*}
(T_{p_0})_{X}Y &= (\tilde{T}_{p_0})_{X}Y + (S_{p_0})_{X}Y -(S_{p_0})_{Y}X,\\
(R_{p_0})_{XY} &= (\tilde{R}_{p_0})_{XY} + [(S_{p_0})_Y, (S_{p_0})_X] + (S_{p_0})_{(S_{p_0})_{Y}X-(S_{p_0})_{X}Y},
\end{align*}
where $R_{p_0}$ and $T_{p_0}$ are the curvature and torsion of $\T$ in $p_0$.

\begin{corollary}\label{Corollary AS8}
	Let $(V, \curv, \tors)$ and $(V', \curv', \tors ')$ be two infinitesimal models associated to $S$, $K$ and $S'$, $K'$, respectively, with
	\begin{align*}
	T_{X}Y &= \tilde{T}_{X}Y + S_{X}Y -S_{Y}X, \quad  R_{XY} = \tilde{R}_{XY} + [S_Y, S_X] - S_{S_{Y}X-S_{X}Y},\\
	T'_{X}Y &= \tors'_{X}Y + S'_{X}Y -S'_{Y}X, \quad  R'_{XY} = \tilde{R}'_{XY} + [S'_Y, S'_X] - S'_{S'_{Y}X-S'_{X}Y}.
	\end{align*}
	Hence, there exists an isomorphism of infinitesimal models if and only if there exists a linear isomorphism $f: V \to V'$ such that,
	\begin{equation} \label{Equation 14}
	 f\, R = R', \quad f\, T = T' \quad f\, S = S', \quad f\, K=K'.
	\end{equation}
\end{corollary}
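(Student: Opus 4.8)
The plan is to reduce the statement to the observation that the algebraic formulas relating the pair $(R,T)$ to the pair $(\curv,\tors)$ through $S$ are equivariant under any linear isomorphism that already intertwines the tensors $S$ and $K$. Recall that, since both infinitesimal models are associated to the tensor set $\{S,K\}$, an isomorphism of infinitesimal models is, by the definition encoded in \eqref{Equation 10}, a linear isomorphism $f\colon V\to V'$ satisfying $f\,\curv=\curv'$, $f\,\tors=\tors'$, $f\,S=S'$ and $f\,K=K'$. On the other side, \eqref{Equation 14} asks for an $f$ with $f\,R=R'$, $f\,T=T'$, $f\,S=S'$ and $f\,K=K'$. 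The two conditions $f\,S=S'$ and $f\,K=K'$ are common to both lists, so it suffices to prove that, for any linear isomorphism $f$ with $f\,S=S'$, one has $f\,\tors=\tors'$ and $f\,\curv=\curv'$ if and only if $f\,T=T'$ and $f\,R=R'$.

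First I would dispatch the torsion. Applying $f$ to $T_XY=\tors_XY+S_XY-S_YX$ and using $f\,S=S'$, i.e.\ $f(S_XY)=S'_{fX}(fY)$, gives
\begin{equation*}
f(T_XY)=f(\tors_XY)+S'_{fX}(fY)-S'_{fY}(fX),
\end{equation*}
whereas $T'_{fX}(fY)=\tors'_{fX}(fY)+S'_{fX}(fY)-S'_{fY}(fX)$. Subtracting, $f\,T=T'$ holds if and only if $f(\tors_XY)=\tors'_{fX}(fY)$ for all $X,Y$, that is, if and only if $f\,\tors=\tors'$.

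The curvature is the only step requiring care, owing to the terms quadratic in $S$. Applying $f$ to $R_{XY}Z=\curv_{XY}Z+[S_Y,S_X]Z-S_{S_YX-S_XY}Z$ and invoking $f\,S=S'$ repeatedly, the bracket term transforms as $f([S_Y,S_X]Z)=[S'_{fY},S'_{fX}](fZ)$, while the last term becomes $S'_{S'_{fY}(fX)-S'_{fX}(fY)}(fZ)$, where $f\,S=S'$ is used both for the inner and the outer occurrence of $S$. Comparing with the expansion of $R'_{fX,fY}(fZ)$, every term built from $S'$ cancels, leaving $f\,R=R'$ equivalent to $f(\curv_{XY}Z)=\curv'_{fX,fY}(fZ)$, i.e.\ $f\,\curv=\curv'$.

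Combining the two equivalences, a single $f$ intertwining $S$ and $K$ satisfies the conditions on the $\TT$-side precisely when it satisfies those on the $\T$-side, which is exactly the asserted equivalence. The only potential obstacle is purely notational: one must fix once and for all the convention for the action of a linear map on the various $(1,2)$- and $(1,3)$-tensors (so that $f(\cdot)$ indeed intertwines as written), and keep track of the two nested applications of $f\,S=S'$ in the quadratic curvature term. Once this convention is in place, the argument is a direct expansion with no hidden difficulty.
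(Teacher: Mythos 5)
Your proof is correct and is essentially the argument the paper intends: the paper states Corollary \ref{Corollary AS8} without a written proof, treating it as an immediate consequence of the equivariance of the algebraic relations linking $(R,T)$ to $(\curv,\tors)$ through $S$, the same observation invoked in the proof of Theorem \ref{Theorem AG7}. Your direct expansion, with the two nested uses of $f\,S=S'$ in the quadratic curvature terms, is precisely that verification spelled out.
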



\section{\Large{Invariant $\mathrm{Sp}(V, \omega)$-submodules of $S^2 V^* \otimes V^*$ and $\wedge^2 V^* \otimes V^*$}}\label{Section IS}

Let $(V, \omega)$  be a symplectic vector space. Based on the classifications given in \cite{AP2011}, we give below explicit expressions of the invariant $\mathrm{Sp}(V)$-submodules of $S^2 V^* \otimes V^*$ and $\wedge^2 V^* \otimes V^*$. For that, we identify a symplectic vector space $V$ and its dual $V^*$ as
\begin{align*}
(\cdot)^*: V &\to V^* \\
X &\longmapsto X^*(Y) = \omega(X,Y).
\end{align*}
Furthermore, we can transfer the symplectic form to $V^*$ as $\omega^*(X^*,Y^*) = \omega(X,Y)$, that is, we regard $(V, \omega)$ and $(V^*, \omega^*)$ as symplectomorphic.

For the sake of simplicity, from now on, we denote $\omega_{XY} = \omega(X,Y)$.

\begin{theorem}\label{Theorem IS1}
	If $n \geq 2$, the space of cotorsion-like tensors has the decomposition in irreducible $\mathrm{Sp}(V)$-submodules as
	\begin{equation*}
	S^2 V^* \otimes V^* = \mathcal{S}_1 (V) + \mathcal{S}_2 (V) + \mathcal{S}_3 (V)
	\end{equation*}
	where,
	\begin{align*}
	\mathcal{S}_1(V) &= \{ S \in S^2 V^* \otimes V^*:
	\: S_{XYZ} = \omega_{ZY}\omega_{XU}+ \omega_{ZX}\omega_{YU}, U \in V \},
	\\
    \mathcal{S}_2(V) &= \{ S \in S^2V^* \otimes V^*:\:
	\SC{XYZ}  S_{XYZ} = 0,\, \mathrm{s}_{13}(S)=0\},
	\\		
	\mathcal{S}_3(V) &=  \{ S \in S^2V^* \otimes V^*:\: S_{XYZ}= S_{XZY} \} = S^3 V^*,
	\end{align*}
	and
    \[
    \mathrm{s}_{13}(S)(Z)=\sum _{i=1}^n \left(S_{e_i Z e_{i+n}}-S_{e_{i+n} Z e_{i}}\right),
    \]
    for a symplectic base $\{ e_1,\ldots,e_n,e_{n+1},\ldots, e_{2n}\}$. If $n =1$, then $S^2 V^* \otimes V^* = \mathcal{S}_1 (V) + \mathcal{S}_3 (V)$.

    The dimensions of the subspaces are
	\begin{equation*}
	\dim (\mathcal{S}_1(V)) =2n, \quad
	\dim (\mathcal{S}_2(V)) =\frac{8}{3}(n^3-n), \quad
	\dim (\mathcal{S}_3(V)) =\small{\begin{pmatrix}
		2n+2\\
		3
		\end{pmatrix}}.
	\end{equation*}

\end{theorem}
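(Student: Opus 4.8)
The plan is to realize the decomposition as the simultaneous kernel/image decomposition of the two natural $\Sp(V)$-equivariant maps out of $S^2 V^* \otimes V^*$, and then to read off irreducibility from the branching $\GL(V)\!\downarrow\!\Sp(V)$ (equivalently, from \cite{AP2011}). The two maps are the total symmetrization $\mathrm{sym}\colon S^2V^*\otimes V^*\to S^3V^*$, $S\mapsto \frac{1}{3}\SC{XYZ}S_{XYZ}$, and the $\omega$-contraction $\mathrm{s}_{13}\colon S^2V^*\otimes V^*\to V^*$ given in the statement; both are manifestly $\Sp(V)$-equivariant, being built only from index permutations and from $\omega$. As a $\GL(V)$-module one has $S^2V^*\otimes V^*\cong S^3V^*\oplus S^{(2,1)}V^*$ by Pieri, and the $\Sp(V)$-refinement will cut $S^{(2,1)}V^*$ into $\mathcal{S}_2$ and $\mathcal{S}_1$.

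First I would dispose of $\mathcal{S}_3$. Since a tensor in $S^2V^*\otimes V^*$ is already symmetric in its first two slots, the condition $S_{XYZ}=S_{XZY}$ forces full symmetry, so $\mathcal{S}_3=S^3V^*$ is exactly the image of the idempotent $\mathrm{sym}$ (indeed $\mathrm{sym}|_{S^3V^*}=\mathrm{id}$). Moreover $S^3V^*$ is $\Sp(V)$-irreducible: the only candidate equivariant contraction uses the antisymmetric form $\omega$, which annihilates a fully symmetric tensor, so $\dim\mathcal{S}_3=\binom{2n+2}{3}$ and $\mathcal{S}_3\subset\ker(\mathrm{s}_{13})$.

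Next I would analyze $\mathcal{S}_1$. Expanding the cyclic sum of $S^U_{XYZ}=\omega_{ZY}\omega_{XU}+\omega_{ZX}\omega_{YU}$ groups the six terms into the three pairs $\omega_{ZY}+\omega_{YZ}$, $\omega_{XZ}+\omega_{ZX}$, $\omega_{YX}+\omega_{XY}$, each vanishing by antisymmetry of $\omega$; hence $\mathcal{S}_1\subset\ker(\mathrm{sym})$. The key computation is $\mathrm{s}_{13}(S^U)$: evaluating on a symplectic basis and using $\sum_{i=1}^n\big(\omega(e_i,X)\omega(e_{i+n},Y)-\omega(e_{i+n},X)\omega(e_i,Y)\big)=\omega(X,Y)$ together with $\omega_{e_{i+n}e_i}-\omega_{e_ie_{i+n}}=-2$, one finds $\mathrm{s}_{13}(S^U)=-(2n+1)\,\omega(\cdot,U)$, a nonzero multiple of $U^*$. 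This single computation does most of the work: it shows $U\mapsto S^U$ is injective (so $\dim\mathcal{S}_1=2n$), that $\mathrm{s}_{13}|_{\mathcal{S}_1}$ is an $\Sp(V)$-isomorphism onto $V^*$ (so $\mathcal{S}_1\cong V$ is irreducible and $\mathcal{S}_1\cap\ker(\mathrm{s}_{13})=0$), and that $\mathrm{s}_{13}$ is surjective.

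Finally I would assemble the decomposition. As $\mathrm{sym}$ is a projection with image $\mathcal{S}_3$, we get $S^2V^*\otimes V^*=\mathcal{S}_3\oplus\ker(\mathrm{sym})$; inside $\ker(\mathrm{sym})$ the map $\mathrm{s}_{13}$ is already surjective on the subspace $\mathcal{S}_1$, with kernel $\mathcal{S}_2=\ker(\mathrm{sym})\cap\ker(\mathrm{s}_{13})$ and complement $\mathcal{S}_1$, so $S^2V^*\otimes V^*=\mathcal{S}_1\oplus\mathcal{S}_2\oplus\mathcal{S}_3$. The dimension of $\mathcal{S}_2$ follows by subtraction, $\dim\mathcal{S}_2=(4n^3+2n^2)-2n-\binom{2n+2}{3}=\frac{8}{3}(n^3-n)$, which also explains the degeneration at $n=1$, where $\mathcal{S}_2=0$ and only $\mathcal{S}_1+\mathcal{S}_3$ survive. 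The one genuinely representation-theoretic point, and the main obstacle, is the irreducibility of $\mathcal{S}_2$: I would obtain it from the branching of the $\GL(V)$-irreducible $S^{(2,1)}V^*$ to $\Sp(V)$, which yields precisely $V^{\Sp}_{(2,1)}\oplus V^{\Sp}_{(1)}$ (multiplicities one), so that the complement of the copies of $(3)$ and $(1)$ is forced to be the single irreducible $V^{\Sp}_{(2,1)}$; alternatively the irreducibility and multiplicity-one statements can be quoted directly from \cite{AP2011}.
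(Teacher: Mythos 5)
Your proposal is correct, and it is built from the same ingredients as the paper's proof, but the logic is organized in a genuinely different (and more self-contained) way. The paper imports the entire abstract decomposition from \cite{AP2011}, Theorem 1.1, applied to $(V^*,\omega^*)$ --- existence of the splitting, irreducibility of the three summands, and their dimensions --- where $\mathcal{S}_1=\mathrm{im}(\xi)$, $\mathcal{S}_2=\ker(\varphi)\cap\ker(\pi)$, $\mathcal{S}_3=S^3V^*$; its proof then consists purely of translating these kernels/images into the explicit tensorial conditions of the statement (computing $\varphi(S)$ to identify $\ker\varphi$ with $\{\mathrm{s}_{13}(S)=0\}$, identifying $\pi$ with the cyclic sum, and computing $\xi(W^*)_{XYZ}=\tfrac{1}{2n+1}(\omega_{ZX}\omega_{WY}+\omega_{ZY}\omega_{WX})$). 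You instead prove the direct-sum decomposition itself by elementary linear algebra: $\mathrm{sym}$ is an idempotent splitting off $\mathcal{S}_3$, and your key computation $\mathrm{s}_{13}(S^U)=-(2n+1)\,\omega(\cdot,U)$ --- which is in effect the composition $\varphi\circ\xi$ of the paper's two computations --- shows $\mathrm{s}_{13}|_{\mathcal{S}_1}$ is an isomorphism onto $V^*$, forcing $\ker(\mathrm{sym})=\mathcal{S}_1\oplus\mathcal{S}_2$; the dimensions then come for free (directly for $\mathcal{S}_1$, $\mathcal{S}_3$, by subtraction for $\mathcal{S}_2$), so the only point where you need representation theory is the irreducibility of the summands, which you obtain from the $\GL(V)\downarrow\Sp(V)$ branching of $S^{(2,1)}V^*$ (correct: Littlewood's rule gives $V^{\Sp}_{(2,1)}\oplus V^{\Sp}_{(1)}$ with multiplicity one) or by citing \cite{AP2011}. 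What each approach buys: the paper's is shorter and leans entirely on the citation; yours isolates precisely which claims are elementary and which genuinely require representation theory, and would survive even if one only trusted \cite{AP2011} for irreducibility. One small caveat: your argument that $S^3V^*$ is $\Sp(V)$-irreducible (``no equivariant contractions survive'') is a heuristic, not a proof --- absence of contractions does not by itself rule out invariant subspaces --- so that point should also be charged to the branching rule or to the citation, as you do for $\mathcal{S}_2$.
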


\begin{proof}
	Given a symplectic basis $\{ e_1, \ldots ,e_n, e_{n+1}, \ldots, e_{2n}\}$ of $V$, we define the morphisms
	\begin{align*}
	\varphi : S^2 V^* \otimes V^* &\to V^* \\
	(u_1^* u_2 ^* \otimes v^*) & \longmapsto \omega_{u_1,v} u_2^* + \omega_{u_2, v} u_1^*,\\
	\pi : S^2 V^* \otimes V^* &\to S^3 V^* \\
	(u_1^* u_2 ^* \otimes u^*) & \longmapsto \frac{1}{3} u^* u_1^* u_2^*,
	\end{align*}
	and
	\begin{align*}
	\xi :  V^* &\to S^2 V^* \otimes V^* \\
	u^* & \longmapsto \frac{1}{2n+1} \sum_{i=1}^{n} e_i^* u^* \otimes e_{i+n}^* - e_{i+n}^* u^* \otimes e_{i}^*.
	\end{align*}
	By Theorem 1.1 in \cite{AP2011}, applied to $(V^*,\omega^*)$, we decompose
	\begin{equation*}
	S^2 V^* \otimes V^* = S^3 V^* + \mathcal{A}' + V^*
	\end{equation*}
	where $\mathcal{A}' = \ker(\varphi)\cap \ker(\pi)$ and $V^*$ is isomorphic to $\mathrm{im}(\xi)$. We define $\mathcal{S}_1(V):= V^*$, $\mathcal{S}_2(V):= \mathcal{A}'$ and $\mathcal{S}_3 (V) := S^3 V^*$.
	
	For the explicit expression of $\mathcal{S}_1(V)$, given $W^* \in V^*$ we have
	\begin{align*}
	\xi(W^*)_{XYZ} &= \frac{1}{2n+1}( \sum_{i=1}^{n}\, x_{i+n} \omega_{WY} z_i + y_{i+n}\omega_{WX} z_i \\
	& \quad -  x_i\omega_{WY} z_{i+n} - y_i \omega_{WX} z_{i+n})\\
    &= \frac{1}{2n +1} ( \omega_{ZX} \omega_{WY} + \omega_{ZY} \omega_{WX}).
    \end{align*}
    Hence, taking $U = \frac{1}{2n +1} W $, we get the required result for $\mathcal{S}_1(V)$.
	
	With respect to the explicit expressions of $\mathcal{S}_2(V)$, for
	\[
	S =\frac{1}{2} \sum_{i,j,k=1}^{2n} S_{e_i e_j e_k} e_i^*e_j^*\otimes e_k^* \in S^2 V^* \otimes V^*,
	\]
	we have
	\begin{align*}
		\varphi(S) 	
		&= \frac{1}{2} \sum_{i,j,k=1}^{2n} S_{e_i e_j e_k} \mathrm{s}_{13}(e_i^*e_j^*\otimes e_k^*)  \\
		&= \frac{1}{2} \sum_{i,j,k=1}^{2n} S_{e_i e_j e_k} ( \omega_{e_ie_k} e_j^* + \omega_{e_je_k}  e_i^*)   \\
		&= \sum_{i,j,k=1}^{2n} \frac{1}{2} (S_{e_i e_j e_k}+ S_{e_j e_i e_k})\omega_{e_ie_k}e_j^* \\
		&= \sum_{j=1}^{2n} \sum_{i=1}^{n} \frac{1}{2} ( S_{e_i e_j e_{i+n}} + S_{e_j e_i e_{i+n}} - S_{e_{i+n} e_j e_i} - S_{e_j e_{i+n} e_i})e_j^* \\
		&= \sum_{j=1}^{2n} \sum_{i=1}^{n} (S_{e_i e_j e_{i+n}}- S_{e_{i+n}e_j e_i})e_j^*.
	\end{align*}
	Hence, $S\in \mathrm{ker}\varphi$ if and only if $\mathrm{s}_{13}(S)=0$ as in the statement. Moreover, $ \frac{1}{3} e_i^*e_j^*e_k^* = e_i^*e_j^*\otimes e_k^* + e_k^*e_i^*\otimes e_j^* +e_j^*e_k^*\otimes e_i^*$ and therefore
	\begin{equation*}
		\pi(S)_{XYZ}	=  \SC{XYZ} S_{XYZ},
	\end{equation*}	
so that we have the expression for the tensors in $\mathcal{S}_2(V)$. 	
	The dimensions come from Theorem 1.1 of \cite{AP2011}.
\end{proof}

Now, using these expressions we are going to give the explicit classes of torsion-like tensors.

\begin{theorem} \label{Theorem IS2}
	If $n \geq 3$, the space of torsion-like tensors has the decomposition in irreducible $\mathrm{Sp}(V)$-submodules as
	\begin{equation*}
	\wedge^2 V^* \otimes V^* =
	\tilde{\mathcal{T}}_1(V)+\tilde{\mathcal{T}}_2(V)+\tilde{\mathcal{T}}_3(V)+\tilde{\mathcal{T}}_4(V)
	\end{equation*}
	where
	\begin{align*}
	\tilde{\mathcal{T}}_1(V) = \{\tors \in  \wedge^2 V^* \otimes V^*&:\: \tors_{XYZ}= 2 \omega_{XY}\omega_{ZU} + \omega_{XZ}\omega_{YU} - \omega_{YZ} \omega_{XU}, \, U \in V\}, \\
	\tilde{\mathcal{T}}_2(V) = \{ \tors \in \wedge^2 V^* \otimes V^* &:\: \SC{XYZ} \tors_{XYZ} = 0, \, \mathrm{t}_{12}(\tors)= 0\},\\
	\tilde{\mathcal{T}}_3(V) = \{\tors \in  \wedge^2 V^* \otimes V^* &:\: \tors_{XYZ} =\omega_{XY}\omega_{UZ} + \omega_{YZ} \omega_{UX} + \omega_{ZX}\omega_{UY}, \, U \in V\},\\
	\tilde{\mathcal{T}}_4(V) = \{\tors\in \wedge^2 V^* \otimes V^*&:\: \tors_{XYZ} = - \tors_{XZY}, \, \mathrm{t}_{12} (\tors) = 0\},
	\end{align*}
	and
	\begin{align*}
		\mathrm{t}_{12}(\tors)(Z)&=\sum _{i=1}^n \left(\tors_{e_i e_{i+n} Z}\right),
	\end{align*}
	for a symplectic basis $\{ e_1,\ldots,e_n,e_{n+1},\ldots, e_{2n}\}$. If $n=2$, then $\wedge^2 V^* \otimes V^* =
	\tilde{\mathcal{T}}_1(V)+\tilde{\mathcal{T}}_2(V)+\tilde{\mathcal{T}}_4(V)$. If $n = 1$, then $\wedge^2 V^* \otimes V^* =
	\tilde{\mathcal{T}}_1(V)$.
	
	In addition,
	\small{
	\begin{equation*}
	\mathrm{dim} (\tilde{\mathcal{T}}_1(V)) = \mathrm{dim} (\tilde{\mathcal{T}}_3(V)) = 2n,  \quad
	\mathrm{dim} (\tilde{\mathcal{T}}_2(V)) = \frac{8}{3}(n^3-n), \quad \mathrm{dim} (\tilde{\mathcal{T}}_4(V)) = \frac{2}{3}n(2n^2 - 3n-2).
	\end{equation*}}
\end{theorem}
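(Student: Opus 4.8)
The plan is to follow the pattern of Theorem \ref{Theorem IS1}: invoke \cite{AP2011} for the abstract irreducible $\mathrm{Sp}(V)$-decomposition together with the dimensions, and then identify each factor explicitly through $\mathrm{Sp}(V)$-equivariant morphisms. The genuinely new feature here, absent in the symmetric case, is that $\wedge^2 V^*\otimes V^*$ carries \emph{two} inequivalent copies of $V^*$, namely $\tilde{\mathcal{T}}_1$ and $\tilde{\mathcal{T}}_3$, so the core of the argument is to keep them apart by recording, for each candidate subspace, its behaviour under two distinct contractions.

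First I would split off the totally antisymmetric part. Since every $\tors\in\wedge^2V^*\otimes V^*$ is already skew in its first two arguments, the cyclic sum $\SC{XYZ}\tors_{XYZ}$ equals three times full antisymmetrization; hence the cyclic-sum map $\pi:\wedge^2V^*\otimes V^*\to\wedge^3 V^*$ is surjective, is split by the inclusion $\wedge^3V^*\hookrightarrow\wedge^2V^*\otimes V^*$, and produces the equivariant decomposition $\wedge^2V^*\otimes V^* = \wedge^3V^*\oplus C$ with $C=\ker\pi=\{\SC{XYZ}\tors_{XYZ}=0\}$. On the summand $\wedge^3V^*$ the contraction $\mathrm{t}_{12}$ reduces to the usual $\omega^*$-trace: its kernel is the primitive module $\tilde{\mathcal{T}}_4$, while the injection $U^*\mapsto U^*\wedge\omega$ (writing $\omega=\sum_i e_i^*\wedge e_{i+n}^*$) realizes the complementary copy of $V^*$. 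A direct expansion yields $(U^*\wedge\omega)_{XYZ}=\omega_{XY}\omega_{UZ}+\omega_{YZ}\omega_{UX}+\omega_{ZX}\omega_{UY}$, which is exactly the stated form of $\tilde{\mathcal{T}}_3$, and feeding it into $\mathrm{t}_{12}$ returns $(n-1)\,U^*$ under the identification $U^*=\omega(U,\cdot)$ of Section \ref{Section IS}, confirming that this copy of $V^*$ is $\mathrm{t}_{12}$-nontrivial precisely when $n\geq 2$.

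I would then treat the complement $C$ in the same spirit. The restriction of $\mathrm{t}_{12}$ to $C$ again surjects onto $V^*$, with kernel the generic module $\tilde{\mathcal{T}}_2=\{\SC{XYZ}\tors_{XYZ}=0,\ \mathrm{t}_{12}(\tors)=0\}$, and the remaining copy of $V^*$ is the image of an injection $\xi_1:V^*\to C$ analogous to the map $\xi$ of Theorem \ref{Theorem IS1}, now antisymmetrized in the first two slots. Checking that $\xi_1(U^*)$ has the stated form $2\omega_{XY}\omega_{ZU}+\omega_{XZ}\omega_{YU}-\omega_{YZ}\omega_{XU}$ is a short index computation: one verifies that this expression is skew in $X,Y$, that its cyclic sum vanishes (so it lands in $C$ and not in $\wedge^3V^*$), and that $\mathrm{t}_{12}$ sends it to $(2n+1)\,U^*$ up to sign, which simultaneously shows transversality to $\tilde{\mathcal{T}}_2$ and pins down the normalizing constant $1/(2n+1)$ in $\xi_1$, exactly as the factor $1/(2n+1)$ appeared for $\xi$ in the symmetric case. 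Combining the two steps gives $\wedge^2V^*\otimes V^*=\tilde{\mathcal{T}}_1\oplus\tilde{\mathcal{T}}_2\oplus\tilde{\mathcal{T}}_3\oplus\tilde{\mathcal{T}}_4$, and the dimensions are read off from \cite{AP2011}; as a numerical check, they sum to $\dim(\wedge^2V^*\otimes V^*)=2n^2(2n-1)$.

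The main obstacle is the bookkeeping that keeps the two copies of $V^*$ apart: both $\tilde{\mathcal{T}}_1$ and $\tilde{\mathcal{T}}_3$ are $2n$-dimensional and both have nonzero $\mathrm{t}_{12}$, so neither contraction alone distinguishes them; the pair $(\pi,\mathrm{t}_{12})$ does, since $\tilde{\mathcal{T}}_3$ lies in the totally antisymmetric summand $\wedge^3V^*$ whereas $\tilde{\mathcal{T}}_1$ lies in $C=\ker\pi$. All the explicit contractions rest on the symplectic completeness relation $\sum_i(\omega_{e_iX}\omega_{e_{i+n}Y}-\omega_{e_{i+n}X}\omega_{e_iY})=\omega_{XY}$, which is precisely where the constants $n-1$ and $2n+1$ arise. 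Finally, the low-dimensional degenerations follow from the vanishing of the relevant irreducible factors, transparent from the factorizations $\dim\tilde{\mathcal{T}}_2=\tfrac{8}{3}n(n-1)(n+1)$ and $\dim\tilde{\mathcal{T}}_4=\tfrac{2}{3}n(2n+1)(n-2)$: for $n=2$ the primitive module $\tilde{\mathcal{T}}_4$ collapses, and for $n=1$ one has $\wedge^3V^*=0$ (so $\tilde{\mathcal{T}}_3=\tilde{\mathcal{T}}_4=0$) together with $\tilde{\mathcal{T}}_2=0$, leaving only $\tilde{\mathcal{T}}_1$.
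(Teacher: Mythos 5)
Your proposal is correct and, at bottom, takes the same approach as the paper: both arguments delegate irreducibility and the dimensions to Theorem 1.2 of \cite{AP2011} applied to $(V^*,\omega^*)$, and then identify the four summands through explicit $\mathrm{Sp}(V)$-equivariant maps. The difference is organizational. The paper obtains $\tilde{\mathcal{T}}_1(V)=A_2(\mathcal{S}_1(V))$ and $\tilde{\mathcal{T}}_2(V)=A_2(\mathcal{S}_2(V))$ by pushing the symmetric classification of Theorem \ref{Theorem IS1} through the antisymmetrization $A_2(S)_{XYZ}=S_{YZX}-S_{XZY}$ and the exact sequence $0\to S^3V^*\to S^2V^*\otimes V^*\to \wedge^2V^*\otimes V^*\to \wedge^3V^*\to 0$ of \cite{AP2011}, while you first split $\wedge^2V^*\otimes V^*=\wedge^3V^*\oplus\ker\pi$ using the cyclic sum $\pi$ (the paper's $A_3$) and then cut each summand with the trace $\mathrm{t}_{12}$, realizing $\tilde{\mathcal{T}}_1(V)$ directly as the image of a normalized injection $\xi_1:V^*\to\ker\pi$ rather than as a pushforward of $\mathcal{S}_1(V)$. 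The treatments of $\wedge^3V^*$ coincide: your $U^*\mapsto U^*\wedge\omega$ is the paper's $\eta$, and your identification of $\tilde{\mathcal{T}}_4(V)$ as $\ker\mathrm{t}_{12}\cap\wedge^3V^*$ matches the paper's computation that, on $\wedge^3 V^*$, $C(\tors)=0$ is equivalent to $\mathrm{t}_{12}(\tors)=0$. Your contraction constants ($\mathrm{t}_{12}\circ\eta=(n-1)\,\mathrm{Id}$, $\mathrm{t}_{12}\circ\xi_1=\pm(2n+1)\,\mathrm{Id}$) check out. One terminological slip: the two copies of $V^*$ are \emph{equivalent}, not ``inequivalent'', as $\mathrm{Sp}(V)$-modules; that multiplicity two is exactly why no single contraction separates them and the pair $(\pi,\mathrm{t}_{12})$ is needed, as you in fact use.

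Where you genuinely depart from the statement is the case $n=2$: you conclude that the primitive module $\tilde{\mathcal{T}}_4(V)$ collapses, so that $\wedge^2V^*\otimes V^*=\tilde{\mathcal{T}}_1(V)+\tilde{\mathcal{T}}_2(V)+\tilde{\mathcal{T}}_3(V)$, whereas the theorem as printed lists $\tilde{\mathcal{T}}_1(V)+\tilde{\mathcal{T}}_2(V)+\tilde{\mathcal{T}}_4(V)$. Your version is the correct one. For $n=2$ both $V^*$ and $\wedge^3V^*$ have dimension $4$, and $C\circ\eta=3(n-1)\,\mathrm{Id}\neq 0$ (cf.\ Remark \ref{Remark IS3}), so $\eta$ is an isomorphism onto $\wedge^3V^*$; hence $\tilde{\mathcal{T}}_3(V)=\wedge^3V^*\neq 0$ while $\tilde{\mathcal{T}}_4(V)=\ker C\cap\wedge^3V^*=0$. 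This also agrees with the paper's own dimension formulas: $\dim\tilde{\mathcal{T}}_4(V)=\tfrac{2}{3}n(2n+1)(n-2)$ vanishes at $n=2$, and $4+16+4+0=24=\dim(\wedge^2V^*\otimes V^*)$. So the printed $n=2$ clause contains a typo ($\tilde{\mathcal{T}}_4$ should read $\tilde{\mathcal{T}}_3$), and your proof has no gap here; it corrects the statement.
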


\begin{proof}
	For a symplectic basis $\{ e_1, \ldots ,e_n, e_{n+1}, \ldots, e_{2n}\}$ of $V$, we define the morphisms
	\begin{align*}
	A_2: S^2 V^* \otimes V^*& \to  \: \wedge  ^2 V^* \otimes V^* \\
	(u_1^*u_2^* \otimes v^*)& \longmapsto  \: v^* \wedge u_1^* \otimes u_2^* + v^* \wedge u_2^*\otimes u_1 ^*, \\
	C: \wedge^2 V^* \otimes V^* & \to V^* \\
	(u_1^*\wedge u_2^* \otimes v^*) & \longmapsto \omega_{u_1 u_2}v^* + \omega_{vu_1}u_2^* + \omega_{u_2 v}u_1^*,
	\end{align*}
	and
	\begin{align*}
	\eta: V^* &\to \wedge^3 V^* \\
	u^* &\longmapsto \sum_{i=1}^{n} e_i^* \wedge e_{i+n}^* \wedge u^*.
	\end{align*}
	By Theorem 1.2 of \cite{AP2011}, applied to $(V^*,\omega^*)$, we decompose
	\begin{equation*}
	\wedge^2 V^* \otimes V^* = V_1^*+\mathcal{A}'+V_2^*+\mathcal{T}'
	\end{equation*}
	where, $V_1^* = A_2(\mathcal{S}_1(V))$, $\mathcal{A}' = A_2(\mathcal{S}_2(V))$, $\mathcal{T}' = \ker C \cap \wedge^3 V^*$ and $V_2^*= \mathrm{Im}(\eta)$ is the vector space such that $V_2^* \subset \wedge^3 V^*$ and $V_2^* + \mathcal{T}' = \wedge^3 V^*$. We define $\tilde{\mathcal{T}}_1(V):= V_1^*$, $\tilde{\mathcal{T}}_2(V):= \mathcal{A}'$, $\tilde{\mathcal{T}}_3 (V) := V_2^*$ and $\tilde{\mathcal{T}}_4(V) : = \mathcal{T}'$.
	
	First, as
	\begin{equation} \label{Equation 15}
		A_2(S)_{XYZ} = S_{YZX}- S_{XZY},
	\end{equation}
we get the expression for the tensors in $\tilde{\mathcal{T}}_1(V)$ in view of the expression of $\mathcal{S}_1(V)$ in Theorem \ref{Theorem IS1}.

	Indeed, by equation \eqref{Equation 15}, we infer the explicit expression of $\tilde{\mathcal{T}}_1(V)$.
	
	To study the explicit expression of $\mathcal{T}_2(V)$, we have to consider the following exact sequence, \cite[Eq. (1.3)]{AP2011},
	\begin{equation*}
		0 \to S^3 V^* \xrightarrow{A_1} S^2V^* \otimes V^* \xrightarrow{A_2} \wedge^2 V^* \otimes V^* \xrightarrow{A_3} \wedge^3 V^* \to 0
	\end{equation*}
	where $A_1 = \pi$ and $A_3(u_1^* \wedge u_2^* \otimes v^*) = u_1^* \wedge u_2 ^*  \wedge v^*$.
	Note that, $ u_1^* \wedge u_2 ^*  \wedge v^* = u_1^* \wedge u_2 ^*  \otimes v^* +  v^* \wedge u_1^*  \otimes u_2^* + u_2^* \wedge  v^*  \otimes u_1^*$, hence,
	\begin{equation*}
		A_3(\tors)_{XYZ} = \SC{XYZ} \tors_{XYZ}.
	\end{equation*}
	
	Therefore, $\tilde{\mathcal{T}}_2(V)$ is generated by $\tors_{XYZ} = S_{YZX}- S_{XZY}$ with $S \in S^2 V^* \otimes V^*$ and $\mathrm{s}_{13}(S)= 0$. The first condition is equivalent to $\tors \in \mathrm{ker}(A_3)$, or equivalently, $ \SC{XYZ} \tors_{XYZ} = 0$. The second condition is equivalent $\mathrm{t}_{12} (T) = 0$ straightforwardly.
	For the explicit expressions of $ \tilde{\mathcal{T}}_4(V)$, given
	\begin{equation*}
		\tors = \frac{1}{2} \sum_{i,j,k=1}^{2n} \tilde{T}_{e_i e_j e_k} e_i^*\wedge e_j^* \otimes e_k^* \in \wedge^2 V^* \otimes V^*,
	\end{equation*}
	we have
	\begin{align*}
		C (\tors)
		&= \frac{1}{2}  \sum_{i,j,k=1}^{2n} \tors_{e_i e_j e_k} C(e_i^*\wedge e_j^* \otimes e_k^*)  \\
		&= \frac{1}{2}  \sum_{i,j,k=1}^{2n} \tors_{e_i e_j e_k} \left(\omega_{e_i e_j} e_k^* + \omega_{e_k e_i} e_j^* + \omega_{e_j e_k} e_i^*\right) \\
		&= \frac{1}{2}\left(  \sum_{i,j,k=1}^{2n} \tors_{e_i e_j e_k} \omega_{e_i e_j} e_k^* +  \sum_{i,j,k=1}^{2n} \tors_{e_i e_j e_k} \omega_{e_k e_i} e_j^* +   \sum_{i,j,k=1}^{2n} \tors_{e_i e_j e_k} \omega_{e_j e_k} e_i^*\right) \\
		&= \frac{1}{2}  \sum_{i,j,k=1}^{2n} \left(\tors_{e_i e_j e_k} + \tors_{e_k e_i e_j}+\tors_{e_j e_k e_i}\right) \omega_{e_i e_j} e_k^* \\
		&=   \sum_{k=1}^{2n} \sum_{i=1}^{n} \frac{1}{2}\left(\tors_{e_i e_{i+n} e_k} + \tors_{e_k e_i e_{i+n}}+ \tors_{e_{i+n} e_k e_i}- \tors_{e_{i+n} e_i e_k} - \tilde{T}_{e_k e_{i+n} e_i} - \tilde{T}_{e_i e_k e_{i+n}} \right) e_k^* \\
		&= \sum_{k=1}^{2n} \sum_{i=1}^{n} \left(\tors_{e_i e_{i+n} e_k} + \tors_{e_k e_i e_{i+n}}+ \tors_{e_{i+n} e_k e_i} \right) e_k^*.
	\end{align*}
	Therefore, for $\tilde{T}\in \wedge ^3 V^*$, $C(\tors)=0$ is equivalent to $\mathrm{t}_{12} (\tors) = 0$.
	
	Finally, with respect to the explicit expressions of $\tilde{\mathcal{T}}_3(V)$, given $U^* \in V^*$ with dual element $U  \in V$,
	\begin{align*}
	\eta (U^*)
	&= \sum_{i=1}^{n} e_i^* \wedge e_{i+n}^* \wedge U^* \\
	&= \sum_{i=1}^{n} \left(e_i^* \wedge e_{i+n}^* \otimes U^* + U^* \wedge e_i^* \otimes e_{i+n}^* + e_{i+n}^* \wedge U^* \otimes e_i^*\right),
	\end{align*}
	evaluating in $X,Y,Z$, we infer,
	\begin{align*}
	\eta(U^*)_{XYZ} &=  \sum_{i=1}^{n}\hspace{0.4cm}  \left((x_i y_{i+n}- x_{i+n} y_i) \omega_{UZ} + \right.\\
	&\hspace{1cm}+ (\omega_{UX} y_{i+n} - \omega_{UY} x_{i+n}) (-z_i) + \\
	&\hspace{1cm}\left. +  (- x_i \omega_{UY} - (-y_i) \omega_{UX}) z_{i+n}\right) \\
	&= \omega_{XY}\omega_{UZ} + \omega_{YZ} \omega_{UX} + \omega_{ZX}\omega_{UY}
	\end{align*}
	Therefore, $\tilde{\mathcal{T}}_3(V)$ has the claimed form.
	
\end{proof}

\begin{remark}\label{Remark IS3}
We have the following sums
\begin{itemize}
	\item $\tilde{\mathcal{T}}_1(V) + \tilde{\mathcal{T}}_2(V) = \{\tors \in \wedge^2 V^* \otimes V^* :\: \SC{XYZ} T_{XYZ} =  0\}$,
	\item $\tilde{\mathcal{T}}_2(V)+ \tilde{\mathcal{T}}_4(V) + W = \{\tors \in \wedge^2 V^* \otimes V^* :\: \mathrm{t}_{12} (\tors) = 0\}, $
    \item $\tilde{\mathcal{T}}_3(V) + \tilde{\mathcal{T}}_4(V) = \wedge^3 V^*$,
    \item $\tilde{\mathcal{T}}_2(V)+ \tilde{\mathcal{T}}_4(V) = \{\tors \in \wedge^2 V^* \otimes V^* :\: \mathrm{t}_{12} (\tors) = 0, \mathrm{t}_{13} (\tilde{T}) = 0\} $.
\end{itemize}
Where,
$$
W = \{\tilde{T} \in \wedge^2 V^* \otimes V^* :  \tilde{T}_{XYZ} = \omega_{XY} \omega_{ZU} - n \omega_{XZ} \omega_{YU} + n \omega_{YZ} \omega_{XU}, U \in V\}
$$
and
$$
\mathrm{t}_{13}(\tilde{T}) (Y) = \sum _{i=1}^n \left(\tors_{e_i Y e_{i+n}} - \tors_{e_{i+n} Y e_{i}}\right)
$$
The first two come directly from the expressions of the classes in the previous Theorem and the fact that $W$ is the linear subspace of $\tilde{\mathcal{T}}_2(V)+ \tilde{\mathcal{T}}_3(V)$ whose elements vanish for $\mathrm{t}_{12}$. With respect to the third identity, we note that $\mathrm{Id}_{V^*} = \frac{1}{3(n-1)} C \circ \eta$, so that we can decompose $\wedge^3 V^* = \ker C +  \mathrm{Im} \,\eta = \tilde{\mathcal{T}}_3(V) + \tilde{\mathcal{T}}_4(V)$. The last identity is consequence of $\mathrm{t}_{13}$ vanishes for $\tilde{\mathcal{T}}_2(V)+ \tilde{\mathcal{T}}_4(V)$ and does not vanish for $W$.
\end{remark}


\section{Classifications for almost symplectic and Fedosov AS-manifolds}\label{Section C}

\subsection{Almost symplectic AS-manifolds}

We now want to classify the infinitesimal models in the case of vector spaces $V$ endowed with a linear symplectic tensor $K=\omega$. If $(V, \curv, \tors)$ and $(V', \curv', \tors')$ are two infinitesimal models associated to symplectic linear tensors $\omega$ and $\omega'$, respectively, with $\mathrm{dim}V=\mathrm{dim}V'$, since there are symplectomorphisms between $V$ and $V'$, we can identify $V'$ with $V$ and $\omega'$ with $\omega$. From \eqref{Equation 10}, isomorphisms $f:V\to V$ of almost symplectic infinitesimal models satisfy
\begin{equation*}
f\, \curv= \curv', \quad f\, \tors = \tors', \quad f\, \omega = \omega,
\end{equation*}
and in particular $f \in \mathrm{Sp}(V,\omega)=\mathrm{Sp}(V)$. If we decompose curvature-like or torsion-like tensor spaces in $\mathrm{Sp}(V)$-irreducible submodules, then we get a necessary condition to be isomorphic as models, by virtue of Theorem \ref{Theorem AS3}, also as AS-manifolds.

For the classification of the torsion $\tors$ into $\mathrm{Sp}(V)$-classes, we will work both with $(1,2)$-tensors and $(0,3)$-tensors given by the isomorphism
\begin{equation*}
\tors_{XYZ}= \omega(\tors_XY, Z), \quad X,Y,Z \in V.
\end{equation*}

Let $(M, \omega)$ an almost symplectic AS-manifold. We denote by $\tilde{\mathcal{T}}$ the set of \emph{homogeneous almost symplectic torsions}, that is, the torsions of an AS-connection on $(M, \omega)$. Given any $p_0 \in M$, from Theorem \ref{Theorem AS1}, $(V=T_{p_0}M,\tilde{R}_{p_0},\tors _{p_0})$ is an infinitesimal model associated to $\omega _{p_0}$. Thus $T_{p_0} \in \wedge^2 V \otimes V$, and the classification given in Theorem \ref{Theorem IS2} gives us the following decomposition
\begin{equation*}
\tilde{\mathcal{T}} =
\tilde{\mathcal{T}}_1+\tilde{\mathcal{T}}_2+\tilde{\mathcal{T}}_3+\tilde{\mathcal{T}}_4
\end{equation*}
where
\begin{align*}
\tilde{\mathcal{T}}_1 &= \{\tors \in \tilde{\mathcal{T}} :\: \tors_{XYZ}= 2 \omega_{XY}\omega_{ZU} + \omega_{XZ}\omega_{YU} - \omega_{YZ} \omega_{XU}, \, U \in \mathfrak{X}(M) \}, \\
\tilde{\mathcal{T}}_2 &= \{ \tors \in \tilde{\mathcal{T}}:\: \SC{XYZ} \tors_{XYZ} = 0, \, \mathrm{t}_{12}(\tors)= 0\}\\
\tilde{\mathcal{T}}_3 &= \{\tors \in \tilde{\mathcal{T}}:\: \tors_{XYZ} =\omega_{XY}\omega_{UZ} + \omega_{YZ} \omega_{UX} + \omega_{ZX}\omega_{UY}, \, U \in \mathfrak{X}(M)\},\\
\tilde{\mathcal{T}}_4 &= \{\tors \in \tilde{\mathcal{T}} :\: \tors_{XYZ}=- \tors_{XZY},\, \mathrm{t}_{12} (\tors) = 0\}.
\end{align*}

\begin{definition}\label{Definition C1}
	Let $\tors \in \tilde{\mathcal{T}}$ be a homogeneous almost symplectic torsion. It is of \emph{type $i$}  if $\tors$ lies in $\tilde{\mathcal{T}}_i$ and correspondingly is of \emph{type $i+j$} if lies in $\tilde{\mathcal{T}}_i + \tilde{\mathcal{T}}_j$ with $i$, $j\in \{1,2,3,4\}$ and $i\neq j$.
\end{definition}
Summarizing, we described almost symplectic AS-manifolds in sixteen classes defined by its torsion tensor.

\begin{theorem} \label{Theorem C2}
	Let $(M, \omega)$ be an almost symplectic AS-manifold. Then, $(M, \omega)$ is a symplectic manifold if and only if the torsion of $\TT$ lies in $\tilde{\mathcal{T}_1} + \tilde{\mathcal{T}_2}$.
\end{theorem}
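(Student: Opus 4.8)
The plan is to translate the differential condition $d\omega = 0$ into an algebraic condition on the torsion $\tors$ of the AS-connection $\TT$, and then to recognise that condition among the $\Sp(V)$-classes of Theorem \ref{Theorem IS2}.

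First I would recall the general identity relating the exterior derivative of a $2$-form to an arbitrary linear connection and its torsion. For any linear connection $\TT$ with torsion $\tors$, expanding $d\omega$ through the usual Koszul-type formula and substituting $[X,Y] = \TT_X Y - \TT_Y X - \tors_X Y$, together with the skew-symmetry of $\omega$, yields
\begin{equation*}
d\omega(X,Y,Z) = \SC{XYZ} \left[ (\TT_X \omega)(Y,Z) + \omega(\tors_X Y, Z) \right].
\end{equation*}
This is the only genuine computation in the argument; care with the sign conventions (the placement of $\tors$ and the various antisymmetrisations) is precisely where errors would creep in, so I would derive it term by term rather than quote it from memory.

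Next, since $(M,\omega,\TT)$ is an AS-manifold we have $\TT\omega = 0$, so the first summand inside the cyclic sum vanishes identically. Using the identification $\tors_{XYZ} = \omega(\tors_X Y, Z)$ fixed earlier, the identity collapses to
\begin{equation*}
d\omega(X,Y,Z) = \SC{XYZ} \tors_{XYZ}.
\end{equation*}
Hence $(M,\omega)$ is symplectic, that is $d\omega = 0$, if and only if $\SC{XYZ}\tors_{XYZ} = 0$ at every point of $M$.

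Finally I would invoke the pointwise description of the classes. At any $p_0\in M$, Theorem \ref{Theorem AS1} identifies $\tors_{p_0}$ with an element of $\wedge^2 V^* \otimes V^*$, and the first identity of Remark \ref{Remark IS3} states precisely that
\begin{equation*}
\tilde{\mathcal{T}}_1(V) + \tilde{\mathcal{T}}_2(V) = \{ \tors \in \wedge^2 V^* \otimes V^* : \SC{XYZ} \tors_{XYZ} = 0 \}.
\end{equation*}
Therefore the vanishing of $d\omega$ is equivalent to $\tors_{p_0} \in \tilde{\mathcal{T}}_1 + \tilde{\mathcal{T}}_2$ for every $p_0$, which is exactly the asserted characterisation. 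The substantive step is thus the torsion formula for $d\omega$; once it is in hand, the conclusion is immediate from the algebraic classification of Theorem \ref{Theorem IS2} and the computation of $\ker(\SC{XYZ}\,\cdot)$ recorded in Remark \ref{Remark IS3}.
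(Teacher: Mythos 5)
Your proof is correct, but it takes a genuinely different route from the paper's. The paper argues via auxiliary connections: for the forward direction it invokes the existence of a torsion-free symplectic connection $\T$ on any symplectic manifold (Theorem 2.1 of \cite{AP2011}), notes that $S=\T-\TT$ is symmetric (i.e.\ lies in $S^2T^*M\otimes T^*M$, since both connections preserve $\omega$) and that $\tors=A_2(-S)$, so $\tors$ lands in $\mathrm{im}(A_2)=\tilde{\mathcal{T}}_1+\tilde{\mathcal{T}}_2$; conversely, from $\tors\in\mathrm{im}(A_2)$ it picks a symmetric preimage $S$ and checks that $\T=\TT-S$ is a torsion-free connection preserving $\omega$, whence $d\omega=0$. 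You instead prove the single identity
\begin{equation*}
d\omega(X,Y,Z)=\SC{XYZ}\left[(\TT_X\omega)(Y,Z)+\omega(\tors_XY,Z)\right],
\end{equation*}
which with $\TT\omega=0$ collapses to $d\omega(X,Y,Z)=\SC{XYZ}\tors_{XYZ}$, and then identify $\ker\bigl(\SC{XYZ}\bigr)=\tilde{\mathcal{T}}_1+\tilde{\mathcal{T}}_2$ via the first item of Remark \ref{Remark IS3}; I have checked your sign conventions ($\tors_XY=\TT_XY-\TT_YX-[X,Y]$, $\tors_{XYZ}=\omega(\tors_XY,Z)$) and the identity is right. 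The two arguments rest on the same algebraic fact — in the exact sequence of \cite{AP2011} one has $\tilde{\mathcal{T}}_1+\tilde{\mathcal{T}}_2=\mathrm{im}(A_2)=\ker(A_3)$ with $A_3$ the cyclic sum — but yours is more self-contained and symmetric: it dispenses with the existence theorem for symplectic connections entirely and handles both implications at once, while the paper's construction has the side benefit of exhibiting explicitly the symmetric tensor $S$ linking $\TT$ to a torsion-free symplectic connection, which is the correspondence $\mathcal{S}_i\leftrightarrow\tilde{\mathcal{T}}_i$ exploited again in Proposition \ref{Proposition C5}.
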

\begin{proof}
	If $(M, \omega)$ is a symplectic manifold, there is a torsion-free symplectic connection $\T$ (see \cite[Theorem 2.1]{AP2011}). The difference $S = \T - \TT$ is a  $(1,2)$-tensor such that $\tors_X Y = S_Y X - S_XY$. Then $\tors_{XYZ} =  A_2 (-S) = S_{XZY}- S_{YZX}$, where $S_{XYZ} = \omega(S_ZX, Y)$ and $\tors_{XYZ} =  \omega(\tors_XY,Z)$. In particular, $\tors$ lies in $\mathcal{T}_1 + \mathcal{T}_2$.
	
	Conversely, if $\tors$ lies in $\mathcal{T}_1 + \mathcal{T}_2$, then there exists at least one tensor $S \in S^2 T^*M \otimes T^*M$, such that, $\tors_{XYZ} = S_{YZX} -S_{XZY}$. We can consider the  tensor $S_XY$ with $ \omega(S_Z X, Y) = S_{XYZ}$. It satisfies that $\tors_X Y = S_XY - S_Y X$ with $\omega(\tors_XY,Z) = \tors_{XYZ}$ and preserves the symplectic form. The connection $\T = \TT - S$ is symplectic.
\end{proof}

\subsection{Fedosov AS-manifolds}

We now want to study infinitesimal models associated to a linear symplectic tensor $\omega$ and a homogeneous structure $S$ as in Corollary \ref{Corollary AS8}. Let $(V, \curv, \tors)$ and $(V', \curv', \tors ')$ be two infinitesimal models associated to (1,2) linear tensors $S$ and $S'$, respectively, with
\begin{align*}
T_{X}Y &= \tilde{T}_{X}Y + S_{X}Y -S_{Y}X, \quad  R_{XY} = \tilde{R}_{XY} + [S_Y, S_X] - S_{S_{Y}X-S_{X}Y},\\
T'_{X}Y &= \tors'_{X}Y + S'_{X}Y -S'_{Y}X, \quad  R'_{XY} = \tilde{R}'_{XY} + [S'_Y, S'_X] - S'_{S'_{Y}X-S'_{X}Y},
\end{align*}
and also associated to symplectic linear tensors $\omega$ and $\omega'$, respectively, with $\mathrm{dim}V=\mathrm{dim}V'$. Since there are symplectomorphisms between $V$ and $V'$, we can identify $V$ with $V'$ and $\omega$ with $\omega'$. Therefore, by \eqref{Equation 14}, there is a linear isomorphism $f: V \to V$ such that,
\begin{equation} \label{Equation 16}
f\, R= R', \quad f\, T = T', \quad f\, S = S', \quad f\, \omega = \omega,
\end{equation}
and in particular $f \in \mathrm{Sp}(V,\omega)=\mathrm{Sp}(V)$. If we decompose cotorsion-like, curvature-like or torsion-like tensor spaces in $\mathrm{Sp}(V)$-irreducible submodules, then we get a necessary condition to be isomorphic as models, by virtue of Theorem \ref{Theorem AS3}, also as AS-manifolds.

Let $(M,\omega, \T)$ be a Fedosov manifold ($\mathrm{dim}M=2n$), that is, a symplectic manifold equipped with affine and torsion free connection such that $\nabla \omega =0$ (cf. \cite{GRS1998}). Let $S = \T -\TT$ be a homogeneous structure tensor, i.e., 
\begin{equation*}
\TT R = 0, \quad \TT S = 0, \quad \TT \omega = 0.
\end{equation*}
Since $\nabla \omega =0$, the second condition is equivalent to $S \cdot \omega = 0$. We will work with $S$ both as a $(1,2)$-tensor and a $(0,3)$-tensor by the isomorphism
\begin{equation*}
S_{XYZ} = \omega(S_Z X, Y).
\end{equation*}
The condition $ S \cdot \omega = 0$ is equivalent to
\begin{equation*}
S_{XYZ} = S_{YXZ}
\end{equation*}
that is, $S\in S^2 T^*M \otimes T^*M$.

From Theorem \ref{Theorem AS1}, given $p_0\in M$, we can consider the infinitesimal model $(V = T_{p_0}M,\curv_{p_0}, \tors_{p_0})$ associated to $S_{p_0}$ and $\omega_{p_0}$ with
\begin{align*}
(T_{p_0})_{X}Y &= (\tilde{T}_{p_0})_{X}Y + (S_{p_0})_{X}Y -(S_{p_0})_{Y}X,\\
(R_{p_0})_{XY} &= (\tilde{R}_{p_0})_{XY} + [(S_{p_0})_Y, (S_{p_0})_X] + (S_{p_0})_{(S_{p_0})_{Y}X-(S_{p_0})_{X}Y},
\end{align*}
where $R_{p_0}$ and $T_{p_0}$ are the curvature and torsion of $\T$ in $p_0$ and  $S_{p_0} \in S^2 V^* \otimes V^*$. We denote by $\mathcal{S}$ the set of homogeneous structures on a Fedosov manifold $(M, \omega, \T)$. Hence, by Theorem \ref{Theorem IS1}, we have the following classification of homogeneous structure tensors in $\mathrm{Sp}(V)$-invariant subspaces:
\begin{equation*}
\mathcal{S} = \mathcal{S}_1 + \mathcal{S}_2 + \mathcal{S}_3,
\end{equation*}
where
\begin{align*}
\mathcal{S}_1&= \{ S \in \mathcal{S}:
\: S_{XYZ} = \omega_{ZY}\omega_{XU}+ \omega_{ZX}\omega_{YU}, U \in \mathfrak{X}(M) \},
\\
\mathcal{S}_2 &= \{ S \in \mathcal{S}:\:
\SC{XYZ}  S_{XYZ} = 0,\, \mathrm{s}_{13}(S)=0\},
\\		
\mathcal{S}_3 &=  \{ S \in \mathcal{S} :\: S_{XYZ}= S_{XZY} \},
\end{align*}
and
\begin{equation*}
	\mathrm{s}_{13}(S)(Z)=\sum _{i=1}^n \left(S_{e_i Z e_{i+n}}-S_{e_{i+n} Z e_{i}}\right),
\end{equation*}
for a symplectic basis $\{ e_1,\ldots,e_n,e_{n+1},\ldots, e_{2n}\}$ of $T_{p_0}M$.

\begin{definition}\label{Definition C3}
	Let $S \in \mathcal{S}$ be a homogeneous Fedosov structure. It is of \emph{type $i$}  if $S$ lies in $\mathcal{S}_i$ and correspondingly is of \emph{type $i+j$} if lies in $\mathcal{S}_i + \mathcal{S}_j$ with $i$, $j \in \{1,2,3\}$ and $i \neq j$.
\end{definition}
Hence, Fedosov homogeneous structure are classified into eight different classes.

\begin{remark}\label{Remark C4}
In \cite{V1985} the author gives a decomposition of the curvature tensor of a symplectic connection in two $\mathrm{Sp}(V)$-irreducible submodules: Ricci type and Ricci flat. Hence, by virtue of \eqref{Equation 16} and Theorem \ref{Theorem AS3}, there is any many as four different classes of symplectic curvature tensor of Fedosov AS-manifolds. We can combine this idea to refine the classification in Definition \ref{Definition C3} to get as many as thirty two different classes of Fedosov AS-manifolds.
\end{remark}

With respect to the classification of homogeneous structures in Definition \ref{Definition C3} and the classification of torsions $\tors$ o AS-manifolds in Definition \ref{Definition C1}, we have the following result which is a consequence of the expression of $A_2$ in \eqref{Equation 15}.

\begin{proposition} \label{Proposition C5}
Let $(M,\omega ,\nabla )$ a Fedosov manifold equipped with homogenous structure $S$.
 \begin{itemize}
 \item If $S\in \mathcal{S}_1$, then the torsion $\tors$ of $\TT=\nabla -S$ belongs to $\tilde{\mathcal{T}}_1$.
 \item If $S\in \mathcal{S}_2$, then the torsion $\tors$ of $\TT=\nabla -S$ belongs to $\tilde{\mathcal{T}}_2$.
 \item If $S\in \mathcal{S}_3$, then the torsion $\tors$ of $\TT=\nabla -S$ vanishes. The manifold $(M, \omega, \TT)$ is a Fedosov manifold with parallel curvature.
 \end{itemize}
 \end{proposition}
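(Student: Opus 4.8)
The plan is to exploit the explicit relation between a homogeneous structure tensor $S$ and the torsion $\tors$ of the connection $\TT = \nabla - S$, where $\nabla$ is the torsion-free Fedosov connection. Since $\nabla$ is torsion-free, the torsion of $\TT$ is governed entirely by $S$ via the standard formula $\tors_X Y = S_Y X - S_X Y$ appearing already in the proof of Theorem \ref{Theorem C2}. Passing to the $(0,3)$-tensor picture through $\tors_{XYZ} = \omega(\tors_X Y, Z)$ and $S_{XYZ} = \omega(S_Z X, Y)$, this translates into $\tors_{XYZ} = A_2(-S)_{XYZ} = S_{XZY} - S_{YZX}$ by the expression for $A_2$ in \eqref{Equation 15}. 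Thus the whole statement reduces to tracking how the map $A_2$ acts on the three irreducible pieces $\mathcal{S}_1, \mathcal{S}_2, \mathcal{S}_3$ of $S^2 V^* \otimes V^*$.

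First I would treat the cases $S \in \mathcal{S}_1$ and $S \in \mathcal{S}_2$ together, since they follow directly from the construction in the proof of Theorem \ref{Theorem IS2}. There, the submodules $\tilde{\mathcal{T}}_1(V)$ and $\tilde{\mathcal{T}}_2(V)$ were \emph{defined} as $A_2(\mathcal{S}_1(V))$ and $A_2(\mathcal{S}_2(V))$ respectively. Hence applying $A_2$ (up to the harmless sign, which does not move a tensor out of a linear subspace) to an element of $\mathcal{S}_1$ lands in $\tilde{\mathcal{T}}_1$, and similarly $\mathcal{S}_2$ maps into $\tilde{\mathcal{T}}_2$. To make the first bullet fully explicit one can simply substitute the closed form of $\mathcal{S}_1$ from Theorem \ref{Theorem IS1} into $\tors_{XYZ} = S_{XZY} - S_{YZX}$ and recognise the resulting expression as the defining form of $\tilde{\mathcal{T}}_1$ in Theorem \ref{Theorem IS2}; this is a routine verification using only the symmetry $\omega_{ZY} = -\omega_{YZ}$.

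The third bullet is the one requiring a genuine observation rather than bookkeeping. If $S \in \mathcal{S}_3 = S^3 V^*$, then $S$ is totally symmetric, so $S_{XZY} = S_{YZX}$ and consequently $\tors_{XYZ} = S_{XZY} - S_{YZX} = 0$; equivalently, $\mathcal{S}_3 = \ker A_2$, which is exactly the statement that $A_1 = \pi$ has image $S^3 V^*$ in the exact sequence $S^3 V^* \xrightarrow{A_1} S^2 V^* \otimes V^* \xrightarrow{A_2} \wedge^2 V^* \otimes V^*$ recalled in the proof of Theorem \ref{Theorem IS2}. Vanishing torsion means $\TT = \nabla - S$ is itself a symplectic torsion-free connection, i.e.\ $(M,\omega,\TT)$ is again Fedosov; and since $\TT$ is an AS-connection it satisfies $\TT \curv = 0$, giving the parallel curvature. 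The only point deserving care is the distinction between vanishing of $S$ as a tensor and vanishing of the induced torsion: $S$ itself need not be zero, only its image under $A_2$ does, which is precisely why $(M,\omega,\TT)$ is a nontrivial Fedosov manifold with parallel curvature rather than forcing $\TT = \nabla$. I expect this conceptual point—that the obstruction lives in the cokernel structure of the exact sequence, so that the symmetric part of $S$ contributes nothing to torsion—to be the main (though mild) subtlety, the rest being direct substitution.
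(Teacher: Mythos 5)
Your proposal is correct and follows essentially the same route as the paper, which proves the proposition in one line by observing that the torsion of $\TT=\nabla-S$ is $\tors = A_2(-S)$ (via $\tors_{XYZ}=S_{XZY}-S_{YZX}$, Equation \eqref{Equation 15}) and that, by construction in Theorem \ref{Theorem IS2}, $\tilde{\mathcal{T}}_1(V)=A_2(\mathcal{S}_1(V))$, $\tilde{\mathcal{T}}_2(V)=A_2(\mathcal{S}_2(V))$, and $\mathcal{S}_3(V)=S^3V^*=\ker A_2$. Your additional remarks—that the sign is harmless on linear subspaces, and that $\tors=0$ together with $\TT\omega=0$ and $\TT\curv=0$ makes $(M,\omega,\TT)$ Fedosov with parallel curvature without forcing $S=0$—are exactly the details the paper leaves implicit.
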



\section{Fedosov AS-manifold of linear type.}

Let $(M,\omega , \T)$ be a Fedosov manifold equipped with a AS-homogeneous structure $S$, that is
\[
\TT R=0, \quad \TT \omega =0, \quad \TT S=0,
\]
for $S=\T -\TT$.

\begin{definition}\label{Definiton FL1}
	A homogeneous Fedosov structure $S$ in $(M,\omega , \T)$ is said to be of \emph{linear type} if it belongs to the class $\mathcal{S}_1$, that is
\begin{equation} \label{Equation 17}
	S_XY = \omega(X,Y) \xi - \omega(Y, \xi)X,
\end{equation}
for a vector field $\xi \in \mathfrak{X}(M)$.

\end{definition}

\begin{theorem}\label{Theorem FL2}
	A Fedosov manifold $(M,\omega ,\T)$ admitting a homogeneous structure tensor of linear type does not admit any pseudo-Riemannian metric such that  $S \cdot g = 0$.
\end{theorem}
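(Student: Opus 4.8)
The plan is to argue by contradiction, working pointwise and reducing the statement to a piece of symplectic linear algebra. Suppose such a metric $g$ exists. Since $S\cdot g=0$ means that each endomorphism $S_X$ is skew-symmetric with respect to $g$, i.e. $g(S_XY,Z)+g(Y,S_XZ)=0$ for all $X,Y,Z$, I would substitute the linear-type expression \eqref{Equation 17}, $S_XY=\omega(X,Y)\xi-\omega(Y,\xi)X$, into this identity. Introducing the two $1$-forms $\alpha=g(\xi,\cdot)$ and $\beta=\iota_\xi\omega=\omega(\xi,\cdot)$, a direct expansion turns the skew-symmetry condition into the single algebraic identity
\begin{equation*}
\omega(X,Y)\,\alpha(Z)+\beta(Y)\,g(X,Z)+\omega(X,Z)\,\alpha(Y)+\beta(Z)\,g(X,Y)=0
\end{equation*}
valid for all $X,Y,Z$. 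A linear-type structure is nontrivial precisely when $\xi\neq 0$, so I fix a point where $\xi\neq 0$; there $\beta\neq 0$ because $\omega$ is nondegenerate.

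Next I extract information by specialization. Setting $Y=Z=\xi$ and using $\beta(\xi)=\omega(\xi,\xi)=0$ leaves $2\,\omega(X,\xi)\,g(\xi,\xi)=0$ for all $X$, whence $g(\xi,\xi)=0$: the field $\xi$ must be $g$-null. This already finishes the Riemannian case, but in the genuinely indefinite case more is needed. Setting only $Z=\xi$ and again using $\alpha(\xi)=g(\xi,\xi)=0$ and $\beta(\xi)=0$ collapses the identity to $\beta(Y)\,\alpha(X)-\beta(X)\,\alpha(Y)=0$, that is $\alpha\wedge\beta=0$; since $\beta\neq 0$ this forces $\alpha=\lambda\beta$ for some scalar $\lambda$.

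Finally I feed $\alpha=\lambda\beta$ back into the master identity and set $h=g+\lambda\omega$, obtaining $\beta(Z)\,h(X,Y)+\beta(Y)\,h(X,Z)=0$ for all $X,Y,Z$. Taking $Y=Z$ gives $\beta(Y)\,h(X,Y)=0$, so for each $X$ the linear functional $h(X,\cdot)$ vanishes on the complement of the hyperplane $\ker\beta$ and therefore identically, i.e. $g=-\lambda\omega$. But $g$ is symmetric while $\omega$ is skew and nonzero, so this forces $\lambda=0$ and then $g=0$, contradicting nondegeneracy of $g$. The one delicate point — and the reason the theorem is stated for pseudo-Riemannian rather than merely Riemannian metrics — is the indefinite case: the relation $g(\xi,\xi)=0$ is no obstruction by itself, and the contradiction emerges only after establishing the rank-one relation $\alpha=\lambda\beta$ and propagating it through the full identity.
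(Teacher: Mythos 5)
Your proof is correct, and it reaches the conclusion by a genuinely different route from the paper's, although both start from the identical expanded identity $\omega(X,Y)g(\xi,Z)-\omega(Y,\xi)g(X,Z)+g(Y,\xi)\omega(X,Z)-g(X,Y)\omega(Z,\xi)=0$. The paper's proof is a two-line specialization: it fixes the $\omega$-dual vector $\eta$ with $\omega(\eta,\xi)=1$, sets $Y=Z=\eta$ to obtain $g(X,\eta)=g(\eta,\xi)\,\omega(X,\eta)$, then evaluates at $X=\xi$ to force $g(\eta,\xi)=0$, whence $g(X,\eta)=0$ for every $X$ and $g$ is degenerate. You instead specialize along $\xi$ itself: $Y=Z=\xi$ gives $g(\xi,\xi)=0$ (settling the definite case), $Z=\xi$ alone gives $\alpha\wedge\beta=0$ hence $\alpha=\lambda\beta$, and substituting back yields $h=g+\lambda\omega$ with $\beta(Z)h(X,Y)+\beta(Y)h(X,Z)=0$, so $h=0$ (the complement of $\ker\beta$ spans the tangent space, so your conclusion $h(X,\cdot)=0$ is sound), and finally $g=-\lambda\omega$ is simultaneously symmetric and skew, hence zero. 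What your longer route buys is a stronger pointwise statement --- the only symmetric bilinear form annihilated by $S$ at such a point is the zero form, not merely a form with one degenerate direction --- together with a transparent account of exactly where indefiniteness could have intervened and why it does not; what the paper's route buys is economy, since the single vector $\eta$ is immediately exhibited as a direction on which $g$ vanishes identically. One shared caveat: both arguments presuppose $\xi\neq 0$ at some point (the paper implicitly, when it chooses $\eta$ with $\omega(\eta,\xi)=1$; you explicitly); if $\xi\equiv 0$ then $S\equiv 0$ and $S\cdot g=0$ holds for any metric, so your explicit restriction to nontrivial linear-type structures is the right reading of the statement.
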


\begin{proof}
Let $\eta$ be a vector such that $\omega (\eta, \xi) = 1$. Let $g$ be a pseudo-Riemannian metric such that $S\cdot g = 0$, that is
\begin{align*}
0&= g (S_X Y, Z) + g(Y, S_X Z) \\
&=	\omega(X,Y)g(\xi,Z)-\omega(Y,\xi)g(X,Z) + g(Y,\xi)\omega(X,Z)-g(Y,X)\omega(Z,\xi).
\end{align*}
Taking $Y=Z=\eta$ we get $ g(X,\eta) = g(\eta ,\xi)\omega(X,\eta)$. We then get $g(\eta ,X)=0$ for $X = \xi$ and for  $X\in \eta ^\perp = \{ v:\omega (v,\xi)=0\}$, which is impossible since $g$ is not degenerate.
\end{proof}

\begin{remark}
	This last Theorem shows that homogeneous structure tensors on Fedosov manifolds can never be studied under the perspective of Kiri\v{c}enko's Theorem as they are genuine non-metric objects.
\end{remark}

We fix the notation of curvature and torsion tensor fields associated to one connection $\T$,
\begin{align*}
	R_{XY} Z =& \T_{[X,Y]} Z - \T_X(\T_Y Z) +\T_Y (\T_X Z), \\
	T_X Y  =& \T_X Y - \T_ YX - [X,Y],
\end{align*}
for the curvature, we will work both with $(1,3)$-tensors and $(0,4)$-tensors given by the isomorphism,
\begin{equation*}
	R_{XYZU} = \omega (R_{XY}Z, U), \quad X,Y,Z,U \in TM.
\end{equation*}

\begin{proposition}\label{Proposition FL4}
	The condition $\TT S = 0$ is equivalent to $\TT \xi = 0$.
\end{proposition}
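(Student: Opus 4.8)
The plan is to use that $\TT\omega=0$ — one of the three defining conditions of the homogeneous structure in this section — so that when we differentiate the linear-type expression $S_XY=\omega(X,Y)\xi-\omega(Y,\xi)X$, all of its ingredients except $\xi$ are $\TT$-parallel. First I would regard $S$ as an algebraic combination of the parallel tensors $\omega$ and $\Id$ with the single vector field $\xi$, and apply the derivation property of $\TT$. Since $\TT\omega=0$ gives $W(\omega(X,Y))=\omega(\TT_WX,Y)+\omega(X,\TT_WY)$ and $W(\omega(Y,\xi))=\omega(\TT_WY,\xi)+\omega(Y,\TT_W\xi)$, all the terms carrying a covariant derivative of $X$ or $Y$ cancel against the corresponding terms in $S_{\TT_WX}Y$ and $S_X\TT_WY$, as does the pure-$\xi$ contribution. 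I therefore expect the clean formula
\[
(\TT_W S)_XY=\omega(X,Y)\,\TT_W\xi-\omega(Y,\TT_W\xi)\,X,\qquad W,X,Y\in\X(M).
\]
In words, $\TT_W S$ is again a tensor of linear type, now governed by the vector $\TT_W\xi$ in place of $\xi$.

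Granting this formula, the implication $\TT\xi=0\Rightarrow\TT S=0$ is immediate, since both terms on the right-hand side vanish identically.

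For the converse I would argue pointwise. Fix $W$ and put $V=\TT_W\xi$; the hypothesis $\TT S=0$ then reads $\omega(X,Y)\,V=\omega(Y,V)\,X$ for all $X,Y$. The decisive step is the nondegeneracy of $\omega$ together with $\dim M=2n\geq2$: if $V\neq0$, choose $X$ not proportional to $V$, which is possible because the tangent space has dimension at least two. For every $Y$ the left-hand side lies in $\langle V\rangle$ and the right-hand side lies in $\langle X\rangle$, and since $\langle V\rangle\cap\langle X\rangle=0$ both sides must vanish; in particular $\omega(X,Y)=0$ for all $Y$ with $X\neq0$, contradicting nondegeneracy of $\omega$. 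Hence $V=\TT_W\xi=0$ for every $W$, i.e. $\TT\xi=0$.

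The whole argument is short: the covariant-derivative computation is routine and the final step is elementary linear algebra. The only point requiring care is the correct use of $\TT\omega=0$ when differentiating the contraction $\omega(Y,\xi)$, so that no spurious $\TT\omega$ terms survive and the formula above comes out with the stated coefficients; beyond this bookkeeping I do not anticipate any substantial obstacle.
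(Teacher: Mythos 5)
Your proposal is correct and follows essentially the same route as the paper: expand $(\TT_W S)(X,Y)$ via the derivation property, use $\TT\omega=0$ to cancel all terms except $\omega(X,Y)\,\TT_W\xi-\omega(Y,\TT_W\xi)\,X$, and then conclude both implications from this formula. Your pointwise linear-algebra argument for the converse is a slightly more detailed justification of the step the paper states tersely (``$\TT_X\xi$ is proportional to any vector field $Y$, hence $\TT_X\xi=0$''), but it is the same idea.
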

\begin{proof}
	Substituting \eqref{Equation 17} in $(\TT_X S)(Y,Z) = \TT_X (S_YZ) - S_{\TT_XY}Z - S_Y(\TT_XZ)$, we get
	\begin{align*}
	(\TT_X S)(Y,Z)	
	&=  \TT_X \left(\omega(Y,Z) \xi - \omega(Z,\xi)Y \right)\\
	&  \hspace{0.25cm} - \omega(\TT_XY,Z)\xi + \omega(Z, \xi) \TT_XY \\
	&  \hspace{0.25cm} - \omega(Y,\TT_XZ)\xi + \omega(\TT_XZ, \xi) Y \\
	&= \hspace{0.03cm} X \left(\omega(Y,Z) \right) \xi + \omega(Y,Z) \TT_X \xi - X \left( \omega(Z,\xi)\right) Y - \omega(Z,\xi) \TT_X Y		\\
	& \hspace{0.25cm}- \omega(\TT_XY,Z)\xi   \hspace{5.25cm}+ \omega(Z, \xi) \TT_XY   \\
	& \hspace{0.25cm}- \omega(Y,\TT_XZ)\xi \hspace{2.55cm} + \omega(\TT_XZ, \xi) Y .
	\end{align*}
	Using $X ( \omega (Y,Z) ) = \omega(\TT_XY,Z) + \omega(Y, \TT_XZ)$ (that is $\TT \omega = 0$), we collect the columns and we get
	\begin{equation*}
		(\TT_X S) (Y,Z) = \omega(Y,Z) \TT_X \xi - \omega(Z,\TT_X\xi)Y.
	\end{equation*}
	If $\TT_X S = 0$, then $\TT_X \xi$ is proportional to any vector field $Y$, and hence $\TT_X \xi = 0$. Conversely, if $\TT_X \xi = 0$, then, we substitute in the previous equation and get $\TT_X S=0$.
\end{proof}

In particular, the vector field $\xi$ defining a homogeneous structure satisfies
\begin{equation}\label{Equation 18}
	\T_X \xi = S_X \xi = \omega(X, \xi)\xi.
\end{equation}

Following the ideas of classifications of homogeneous structures of linear type in the pseudo-Riemannian case (see \cite[Chap. 5]{CC2019}), we study the curvature and Ricci tensor of $\T$.

	\begin{proposition}\label{Proposition FL5}
		Fedosov AS-manifolds of linear type satisfy,
		\begin{align}
		R_{\xi XYZ} &= \omega_{X\xi}\omega_{Y\xi}\omega_{Z\xi} R_{\xi \xp \xp \xp}, \label{Equation 19} \\
		R_{XYUW} &= \left(- \omega_{XY} - \omega_{\xp X} \omega_{Y \xi} + \omega_{\xp Y} \omega_{X\xi} \right) \omega_{U\xi} \omega_{W \xi} R_{\xi \xp \xp \xp} \label{Equation 20}\\
		&\hspace{0.4cm}- \omega_{X \xi} R_{Y \xp U W} - \omega_{Y\xi} R_{\xp XUW} \nonumber
		\end{align}	
		for every $X,Y,U,W \in TM$ and any $\xp\in TM$ such that $\omega(\xp, \xi) = 1$.
	\end{proposition}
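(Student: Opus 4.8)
The plan is to extract everything from two inputs: the first-order identity coming from $\TT R=0$, and the first and second Bianchi identities of the torsion-free connection $\T$. I would begin by recording the algebraic consequences of \eqref{Equation 18}. Since $\T_X\xi=\omega(X,\xi)\xi$ and $\T\omega=0$, a direct computation of $R_{XY}\xi=\T_{[X,Y]}\xi-\T_X\T_Y\xi+\T_Y\T_X\xi$ collapses to zero, so $R_{XY}\xi=0$; equivalently, in the $(0,4)$ form $R_{XYZU}=\omega(R_{XY}Z,U)$, the curvature vanishes whenever the third or fourth slot is $\xi$ (for the fourth slot one uses that $\T\omega=0$ forces $R_{XYZU}=R_{XYUZ}$). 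Substituting $Z=\xi$ into the first Bianchi identity $\SC{XYZ}R_{XY}Z=0$ then gives $R_{\xi Y}Z=R_{\xi Z}Y$, so that $R_{\xi\,\cdot\,\cdot\,\cdot}$ is totally symmetric in its last three arguments.

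Next I would turn $\TT R=0$ into a master relation. Writing $\T=\TT+S$ and using that $S_W$ acts as a derivation, $\TT R=0$ is equivalent to
\[
(\T_W R)_{XY}Z=S_W(R_{XY}Z)-R_{S_WX,\,Y}Z-R_{X,\,S_WY}Z-R_{XY}(S_WZ).
\]
Inserting the linear-type expression $S_WX=\omega(W,X)\xi-\omega(X,\xi)W$ from \eqref{Equation 17} and simplifying with $R_{XY}\xi=0$ and $R_{XYZ\xi}=0$ yields an explicit formula for $\T_W R$ in terms of $R$, $\omega$ and $\xi$ alone.

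The heart of the argument is to feed this formula into the second Bianchi identity $\SC{WXY}(\T_W R)_{XY}Z=0$ (there are no torsion terms, as $\T$ is torsion free) and collect the cyclic sum into a single vector identity relating the components $R_{\xi\,\cdot\,\cdot\,\cdot}$ to the general $R$. Specializing this identity two ways finishes the proof, in each case the terms carrying a factor $\xi$ combining, after contraction with a covector, into a first-Bianchi cyclic sum that vanishes. Setting $X=\xi$ leaves $\omega_{Y\xi}R_{\xi WZV}=\omega_{W\xi}R_{\xi YZV}$; taking $W=\xp$ and using the total symmetry of $R_{\xi\,\cdot\,\cdot\,\cdot}$ then collapses each of the three lower slots against $\omega_{\cdot\,\xi}$, producing exactly \eqref{Equation 19} with scalar $R_{\xi\xp\xp\xp}$. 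Taking instead $W=\xp$ and $X,Y$ in $\{v:\omega(v,\xi)=0\}$ makes every $R_{\xi\,\cdot\,\cdot\,\cdot}$ term vanish by \eqref{Equation 19}, kills the $\xi$-directional bracket by first Bianchi, and leaves $R_{XYUW}=-\omega_{XY}\,\omega_{U\xi}\,\omega_{W\xi}\,R_{\xi\xp\xp\xp}$ on the block $\xi^{\perp}\times\xi^{\perp}$.

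For \eqref{Equation 20} I would finally decompose each of the first two arguments as $X=\omega_{X\xi}\xp+\bar{X}$ with $\omega(\bar{X},\xi)=0$. Multilinearity together with $R_{\xp\xp UW}=0$ gives $R_{XYUW}=-\omega_{X\xi}R_{Y\xp UW}-\omega_{Y\xi}R_{\xp XUW}+R_{\bar{X}\bar{Y}UW}$, and the block term $R_{\bar{X}\bar{Y}UW}$ is evaluated by the identity just obtained, after checking the bookkeeping $-\omega_{\bar{X}\bar{Y}}=-\omega_{XY}-\omega_{\xp X}\omega_{Y\xi}+\omega_{\xp Y}\omega_{X\xi}$. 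The main obstacle is entirely computational: performing the cyclic sum in the second Bianchi identity correctly and tracking the many $\omega(\cdot,\xi)$ factors so that the $\xi$-directional terms cancel; once that bookkeeping is controlled, both \eqref{Equation 19} and \eqref{Equation 20} follow from the two specializations above.
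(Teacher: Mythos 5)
Your proposal is correct and follows essentially the same route as the paper: both establish $R_{XY}\xi=0$, the symmetry $R_{XYZU}=R_{XYUZ}$ and $R_{\xi X}Y=R_{\xi Y}X$, then combine $\TT R=0$ (via the derivation action of $S$ of linear type) with the second Bianchi identity to obtain the cyclic identity $\SC{XYZ}\left(\omega_{XY}R_{\xi ZUW}+\omega_{X\xi}R_{YZUW}\right)=0$, and finally specialize one argument to $\xi$ and then to $\xp$ to collapse the slots and obtain \eqref{Equation 19} and \eqref{Equation 20}. The only cosmetic difference is your last step: you evaluate the identity on the block $\omega(X,\xi)=\omega(Y,\xi)=0$ and extend by bilinearity through $X=\omega_{X\xi}\,\xp+\bar{X}$, whereas the paper substitutes $Z=\xp$ directly with general arguments; both are the same computation.
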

	\begin{proof}

From \eqref{Equation 18} and the fact that the torsion of $\nabla$ vanishes, we get
	\begin{align*}
		R_{XY}\xi &= \omega([X,Y],\xi) \xi - \T_X (\omega(Y,\xi) \xi)  + \T_Y (\omega(X,\xi)\xi)\\
		&=  \omega(\T_XY, \xi) \xi - \omega(\T_Y X,  \xi) \xi - X(\omega(Y,\xi)) \xi + Y(\omega(X,\xi)) \xi .
	\end{align*}
As $\T \omega = 0$ and get, this last expression simplifies to $R_{XY}\xi = - \omega(Y, \T_X \xi) \xi + \omega(X, \T_Y \xi) \xi$ which, again by \eqref{Equation 18}, gives
	\begin{equation}\label{Equation 21}
		R_{XY}\xi = 0,
	\end{equation}
together with
	\begin{equation}\label{Equation 22}
		R_{\xi X}Y = R_{\xi Y}X,
	\end{equation}
from the first Bianchi identity.

As $R_{XY} \cdot \omega = 0$, then,
\begin{equation}\label{Equation 23}
	R_{XYZU} = R_{XYUZ}.
\end{equation}

The condition $\TT R = 0$ (that is, $\T_X R = S_X R$) reads
	\begin{equation*}
	(\T_X R)_{YZUW} = - R_{S_XY \,ZUW} - R_{Y\,S_XZUW}- R_{YZ\,S_XUW}- R_{YZU\,S_XW}.
	\end{equation*}
Applying the second Bianchi identity, we get
\begin{align*}
	0 = &\SC{XYZ} \left(- R_{S_XY \,ZUW} - R_{Y\,S_XZUW}- R_{YZ\,S_XUW}- R_{YZU\,S_XW}\right)\\
     = &\SC{XYZ} \left(- \omega_{XY}R_{\xi ZUW}- \omega_{XZ}R_{Y\xi UW}- \omega_{XU}R_{YZ\xi W} - \omega_{XW}R_{YZU \xi}\right.\\
	& \left.  + \omega_{Y \xi} R_{XZUW} +\omega_{Z \xi} R_{YXUW} +\omega_{U \xi} R_{YZXW}+ \omega_{W \xi} R_{YZUX}\right),
\end{align*}
which by virtue of \eqref{Equation 21}, \eqref{Equation 23} and the first Bianchi identity reduces to
	\begin{equation}\label{Equation 24}
	0 = \SC{XYZ} \left(\omega_{XY}R_{\xi ZUW}+ \omega_{ X\xi} R_{YZUW}\right).
	\end{equation}
Choosing $Z=\xi$, we have
	\begin{equation} \label{Equation 25}
	\omega_{X\xi}R_{\xi YUW} = \omega_{Y\xi}R_{\xi XUW}. 
	\end{equation}
	If we choose $X= \xp$ in \eqref{Equation 25}, then, we get $R_{\xi YUW}= \omega_{Y\xi} R_{\xi \xp U W}$, using symmetry of \eqref{Equation 23} and applying equation above we have $R_{\xi YUW}= \omega_{Y\xi} \omega_{U \xi} R_{\xi \xp \xp W}$, and proceeding in an analogous way, using \eqref{Equation 22} and \eqref{Equation 23}, we conclude \eqref{Equation 19}. Substituting $Z = \xp$ in \eqref{Equation 24} and using \eqref{Equation 19}, we get \eqref{Equation 20}.
	\end{proof}

\begin{remark}\label{Remark FL6}
	Equation \eqref{Equation 24} can be refined using \eqref{Equation 19},
	\begin{equation*}
	0 = \SC{XYZ} \left(\omega_{XY}\omega_{Z\xi}\omega_{U\xi}\omega_{W\xi} R_{\xi \xp \xp \xp}+ \omega_{ X\xi} R_{YZUW}\right).
	\end{equation*}
\end{remark}

 We now proceed by parts to prove the main result of this section (see Theorem \ref{Theorem FL10})  which characterize Fedosov AS-manifolds of linear type in terms of a foliation of Hamiltonian leaves of codimension $1$.

\begin{lemma}\label{Lemma FL7}
	The distribution $\mathcal{D}=\{ X\in \mathfrak{X}(M) : \omega(X,\xi) = 0 \}$ is an integrable distribution.
\end{lemma}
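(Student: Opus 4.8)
The plan is to apply the Frobenius theorem, exploiting the three structural facts already available: the background connection $\T$ is torsion free, it is symplectic ($\T\omega=0$), and by \eqref{Equation 18} it satisfies $\T_X\xi=\omega(X,\xi)\xi$. First I would observe that $\mathcal{D}$ is precisely the kernel of the one-form $\alpha$ defined by $\alpha(X)=\omega(X,\xi)$. Since, by Proposition \ref{Proposition FL4}, the field $\xi$ is $\TT$-parallel and (being the generator of the homogeneous structure) nonzero, parallel transport shows $\xi$ is nowhere vanishing; hence $\alpha$ is nowhere zero and $\mathcal{D}$ is a genuine distribution of constant corank one. For such a distribution, integrability is equivalent to involutivity, so it suffices to show $[X,Y]\in\mathcal{D}$ whenever $X,Y\in\mathcal{D}$.

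The core of the argument is a short computation of $\omega([X,Y],\xi)$ for $X,Y\in\mathcal{D}$. Because $\T$ is torsion free, $[X,Y]=\T_XY-\T_YX$, so it is enough to control $\omega(\T_XY,\xi)$. Using $\T\omega=0$ I would write $\omega(\T_XY,\xi)=X\bigl(\omega(Y,\xi)\bigr)-\omega(Y,\T_X\xi)$. The first term vanishes because $\omega(Y,\xi)$ is the identically zero function on $M$ (as $Y\in\mathcal{D}$), and the second vanishes after substituting \eqref{Equation 18}, since $\omega(Y,\T_X\xi)=\omega(X,\xi)\,\omega(Y,\xi)=0$. Interchanging the roles of $X$ and $Y$ gives $\omega(\T_YX,\xi)=0$ as well, whence $\omega([X,Y],\xi)=0$ and $[X,Y]\in\mathcal{D}$, which is the desired involutivity.

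The very same cancellation can be packaged more invariantly as the statement that $\alpha$ is closed: since $\T$ is torsion free, $d\alpha(X,Y)=(\T_X\alpha)(Y)-(\T_Y\alpha)(X)=\omega(X,\xi)\,\omega(Y,\xi)-\omega(Y,\xi)\,\omega(X,\xi)=0$, so $\ker\alpha=\mathcal{D}$ is automatically integrable. I do not anticipate any serious obstacle here; the only point demanding care is the elementary but essential bookkeeping that for $Y\in\mathcal{D}$ the scalar $\omega(Y,\xi)$ is identically zero, so that its directional derivative drops out, and that the clean cancellation rests entirely on the eigenvector-type identity \eqref{Equation 18} for $\T\xi$.
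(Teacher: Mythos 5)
Your proof is correct and follows essentially the same route as the paper: involutivity of $\mathcal{D}$ via the identity $[X,Y]=\T_XY-\T_YX$ (torsion-freeness), the compatibility $\T\omega=0$, and the key relation \eqref{Equation 18}, with the same cancellation. The extra observations — that $\xi$ is nowhere vanishing by $\TT$-parallelism (so $\mathcal{D}$ has constant corank one) and that the computation amounts to closedness of $\alpha=i_\xi\omega$ — are valid refinements the paper leaves implicit.
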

\begin{proof}
	Given $X,\,Y \in \mathcal{D}$, we compute,
	\begin{equation*}
	\omega([X,Y],\xi)= \omega(\T_XY, \xi) - \omega(\T_YX, \xi)
	\end{equation*}
	we use that $\T \omega = 0$,
	\begin{equation*}
	\omega([X,Y],\xi)= X(\omega(Y, \xi))- \omega(Y,\T_X \xi) - Y(\omega(X, \xi)) + \omega(X,\T_Y \xi),
	\end{equation*}
	finally, we use that $X,\,Y \in \mathcal{D}$ and \eqref{Equation 18},
	\begin{equation*}
	\omega([X,Y], \xi) = - \omega(X,\xi)\omega(Y,\xi) + \omega(Y,\xi)\omega(X,\xi) = 0.
	\end{equation*}
	Hence, the distribution $\mathcal{D}$ is integrable.
\end{proof}

 \begin{lemma}\label{Lemma FL8}
 	The vector field $\xi$ satisfies the following properties,
 	\begin{enumerate}
 		\item It is a geodesic vector field with respect to $\T$.
 		\item Its flow preserves the symplectic form.
 	\end{enumerate}
 \end{lemma}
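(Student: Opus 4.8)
The plan is to derive both statements directly from the defining identity \eqref{Equation 18}, namely $\T_X\xi=\omega(X,\xi)\xi$, together with the fact that $\T$ is torsion-free and symplectic ($\T\omega=0$). Neither property requires genuinely new input; both are algebraic consequences of \eqref{Equation 18} once combined with the right structural identities, so the work is mostly a matter of assembling the pieces cleanly.

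For the first property, I would simply evaluate \eqref{Equation 18} at $X=\xi$. Since $\omega$ is antisymmetric, $\omega(\xi,\xi)=0$, and hence
\begin{equation*}
\T_\xi\xi=\omega(\xi,\xi)\xi=0.
\end{equation*}
Thus the integral curves of $\xi$ are geodesics (parametrized affinely), which is precisely the assertion that $\xi$ is a geodesic vector field with respect to $\T$.

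For the second property, the aim is to show $\mathcal{L}_\xi\omega=0$, which is equivalent to the flow of $\xi$ preserving $\omega$. First I would rewrite the Lie derivative of the $2$-form in terms of the connection: starting from
\begin{equation*}
(\mathcal{L}_\xi\omega)(X,Y)=\xi\,\omega(X,Y)-\omega([\xi,X],Y)-\omega(X,[\xi,Y]),
\end{equation*}
I would substitute $\xi\,\omega(X,Y)=\omega(\T_\xi X,Y)+\omega(X,\T_\xi Y)$ (from $\T\omega=0$) and $[\xi,X]=\T_\xi X-\T_X\xi$ (from vanishing torsion), together with the analogous expression for $[\xi,Y]$. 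The terms involving $\T_\xi X$ and $\T_\xi Y$ cancel, leaving the clean reduction
\begin{equation*}
(\mathcal{L}_\xi\omega)(X,Y)=\omega(\T_X\xi,Y)+\omega(X,\T_Y\xi).
\end{equation*}
Now inserting \eqref{Equation 18} on both slots yields $\omega(X,\xi)\,\omega(\xi,Y)+\omega(Y,\xi)\,\omega(X,\xi)$, and using $\omega(\xi,Y)=-\omega(Y,\xi)$ this vanishes identically. Hence $\mathcal{L}_\xi\omega=0$, as required.

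I do not anticipate a serious obstacle here: the only computation of substance is the Lie-derivative manipulation in the second part, and its success rests entirely on having all three ingredients available simultaneously ($\T\omega=0$, torsion-freeness, and \eqref{Equation 18}). The single point to be careful about is the bookkeeping of the cancellation of the $\T_\xi X,\,\T_\xi Y$ terms and the correct use of the antisymmetry of $\omega$ at the final step, after which both properties follow at once.
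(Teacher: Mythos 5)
Your proof is correct and follows essentially the same route as the paper: the first part is the same one-line evaluation of \eqref{Equation 18} at $X=\xi$, and the second part is exactly the paper's computation of $(\mathcal{L}_\xi\omega)(X,Y)$ using $\T\omega=0$ and torsion-freeness to reduce it to $\omega(\T_X\xi,Y)+\omega(X,\T_Y\xi)$, which vanishes upon inserting \eqref{Equation 18} and using antisymmetry. Your write-up merely makes the intermediate cancellation of the $\T_\xi X$, $\T_\xi Y$ terms more explicit than the paper's single display.
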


 \begin{proof}
 	The first statement comes from \eqref{Equation 18}, that is, $\T_{\xi}\xi = 0$. With respect to the second item, using $\T \omega = 0$ and that $\T$ is torsion free, for $X, Y$ two vector fields
 	\begin{align*}
 		(\mathcal{L}_{\xi}\omega)(X,Y) &=  \omega(\T_{\xi}X,Y) - \omega(\T_{\xi} X, Y) - \omega(X, \T_{\xi} Y)\\
 		& +  \omega(X, \T_{\xi} Y) + \omega(\T_{X}\xi , Y) + \omega(X, \T_{Y} \xi)\\
 		&=  \omega(X, \xi) \omega(\xi , Y) + \omega(X, \xi) \omega(Y, \xi) = 0
 	\end{align*}
 	which means that the flow of $\xi$ preserves the symplectic form.
 \end{proof}

\begin{theorem} \label{Theorem FL10}
Let $(M,\omega,\T)$ be a connected and simply-connected Fedosov manifold endowed with a homogeneous structure $S$ of linear type. Let $\xi$ the vector field associated to $S$. Then:
\begin{itemize}
\item the manifold is foliated by the leaves defined by the Hamiltonian $H$ of the Hamilton equation $i_\xi \omega =dH$,
\item the connection $\T$ restricts to the leaves and in particular, the leaves are totally geodesic submanifolds,
\item the leaves are flat manifolds.
\end{itemize}
Furthermore, if in addition $\TT = \T - S$ is complete, $M$ is Fedosov homogeneous.
\end{theorem}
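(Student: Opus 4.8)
The plan is to establish the three bullet points in order, drawing on the structural facts already proved (Lemmas~\ref{Lemma FL7} and~\ref{Lemma FL8}, Propositions~\ref{Proposition FL4} and~\ref{Proposition FL5}), and then to deduce global homogeneity from the Ambrose--Singer machinery of Section~\ref{Section AG}. I first note that $\xi$ is nowhere zero unless $S\equiv 0$: equation~\eqref{Equation 18} reads $\T_X\xi=\omega(X,\xi)\xi$, a linear first-order ODE for $\xi$ along any curve, so a single zero of $\xi$ on the connected $M$ forces $\xi\equiv 0$. Thus the distribution below has constant corank $1$ and the foliation is genuine.

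For the \emph{Hamiltonian foliation}, the distribution $\mathcal{D}=\{X\in\X(M):\omega(X,\xi)=0\}$ is precisely $\ker(i_\xi\omega)$, and it is integrable by Lemma~\ref{Lemma FL7}. To identify its leaves with level sets of a Hamiltonian, I would show $i_\xi\omega$ is closed: Cartan's formula gives $\mathcal{L}_\xi\omega=d(i_\xi\omega)+i_\xi\,d\omega$, and since $d\omega=0$ together with $\mathcal{L}_\xi\omega=0$ (Lemma~\ref{Lemma FL8}) we get $d(i_\xi\omega)=0$. As $M$ is simply-connected, $i_\xi\omega=dH$ for a global function $H$, whose level sets are the integral manifolds of $\ker dH=\mathcal{D}$; hence $\xi$ is the Hamiltonian vector field of $H$ and the leaves are exactly those of the statement.

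For the \emph{restriction of $\T$ and total geodesy}, I would compute for sections $X,Y$ of $\mathcal{D}$, using $\T\omega=0$, that $\omega(\T_XY,\xi)=X(\omega(Y,\xi))-\omega(Y,\T_X\xi)$. The first term vanishes because $\omega(Y,\xi)\equiv 0$ along $\mathcal{D}$, and the second vanishes by~\eqref{Equation 18} since $\T_X\xi=\omega(X,\xi)\xi=0$ for $X\in\mathcal{D}$. Therefore $\T_XY\in\mathcal{D}$, so $\T$ restricts to each leaf, its second fundamental form vanishes, and the leaves are totally geodesic. For \emph{flatness}, since $\T$ restricts and $\mathcal{D}$ is integrable, the intrinsic curvature of each leaf is the ambient $R$ evaluated on tangent arguments. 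Setting $U=Z$ in~\eqref{Equation 20} with $X,Y,Z\in\mathcal{D}$, every summand carries one of the factors $\omega(X,\xi)$, $\omega(Y,\xi)$ or $\omega(Z,\xi)$, all zero on $\mathcal{D}$; hence $\omega(R_{XY}Z,W)=0$ for all $W\in TM$, and nondegeneracy of $\omega$ forces $R_{XY}Z=0$, so the leaves are flat.

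Finally, if $\TT=\T-S$ is complete, then $(M,\omega,\T,\TT)$ is a homogeneous structure with $\TT R=\TT T=\TT S=\TT\omega=0$ (here $T=0$ because $\T$ is torsion-free); as $M$ is connected, simply-connected and $\TT$ is complete, Theorem~\ref{Theorem AG7} produces $M=G/H$ reductive homogeneous with $\omega$ and $\T$ both $G$-invariant, i.e.\ Fedosov homogeneous. The step I expect to be the \textbf{main obstacle} is the flatness claim: one must confirm that the intrinsic leaf curvature coincides with the ambient curvature restricted to $\mathcal{D}$ (which uses total geodesy and integrability), and that the degeneracy of $\omega$ on the odd-dimensional leaves (its kernel along $\mathcal{D}$ is spanned by $\xi$, since $\mathcal{D}=\xi^{\perp_\omega}$) does not obstruct concluding $R_{XY}Z=0$. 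This is circumvented by pairing $R_{XY}Z\in\mathcal{D}$ against all of $TM$ in the ambient \emph{nondegenerate} $\omega$, for which Proposition~\ref{Proposition FL5} already delivers the vanishing.
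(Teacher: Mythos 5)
Your proof is correct and follows essentially the same route as the paper's: Cartan's formula together with simple connectedness produces the Hamiltonian $H$, the identical $\T \omega = 0$ computation shows $\T$ restricts to the leaves, flatness follows from evaluating \eqref{Equation 20} on arguments in $\mathcal{D}$, and the completeness claim is Theorem \ref{Theorem AG7} applied to $\TT R = \TT T = \TT S = \TT \omega = 0$. Your two additional observations --- that $\xi$ is nowhere zero unless $S \equiv 0$ (by ODE uniqueness applied to \eqref{Equation 18} along curves), so the distribution has constant corank, and that nondegeneracy must be invoked by pairing $R_{XY}Z$ against all of $TM$ rather than against the degenerate restriction of $\omega$ to the odd-dimensional leaves --- are points the paper's proof passes over silently, and you handle both correctly.
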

\begin{proof}
As $M$ is simply-connected, the invariance of $\omega$ with respect to $\xi$ gives the existence of a function $H:M\to\mathbb{R}$ solving the Hamilton equation $i_{\xi} \omega = d H$. For any $X\in\mathcal{D}$, we have $dH(X)=\omega (\xi, X)=0$, so that $H$ is constant along the leaves of $\mathcal{D}$.  For $X,Y$ vector fields tangent to the distribution, $\omega (\T _X Y,\xi)=X(\omega (\xi, Y))+\omega (\T _X\xi,Y)=0$, and the connection restricts to the leaves. Finally, the curvature of $\T$ vanishes along the leaves from \eqref{Equation 20}.
\end{proof}
The local version of this last result is straightforward.

\begin{theorem} \label{Theorem FL11}
Let $(M,\omega,\T)$ be a Fedosov manifold endowed with a homogeneous structure $S$ of linear type. Let $\xi$ the vector field associated to $S$. Then:
\begin{itemize}
\item the manifold is foliated by the leaves locally defined by the Hamiltonian $H$ of the Hamilton equation $i_\xi \omega =dH$,
\item the connection $\T$ restricts to the leaves and in particular, the leaves are totally geodesic submanifolds,
\item the leaves are flat manifolds.
\end{itemize}
Furthermore, $M$ is locally Fedosov homogeneous.
\end{theorem}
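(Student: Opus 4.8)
The plan is to run the argument of Theorem \ref{Theorem FL10} essentially verbatim, observing that its three bullet points are purely local assertions, and to replace the single global use of simply-connectedness by the Poincar\'e lemma. Concretely, every ingredient except the global integration of $i_\xi\omega$ is either pointwise-algebraic or already local: Lemma \ref{Lemma FL7} shows that $\mathcal{D}=\{X:\omega(X,\xi)=0\}$ is integrable on all of $M$, and Lemma \ref{Lemma FL8} gives $\mathcal{L}_\xi\omega=0$, whence $d(i_\xi\omega)=\mathcal{L}_\xi\omega-i_\xi\,d\omega=0$. Thus $i_\xi\omega$ is closed, and although it need not be globally exact without simply-connectedness, on any contractible open set $U\subset M$ the Poincar\'e lemma produces $H\colon U\to\R$ with $i_\xi\omega=dH$. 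Since $dH(X)=\omega(\xi,X)=0$ for $X\in\mathcal{D}$, this $H$ is constant along the leaves, which are therefore the local level sets of $H$; this yields the first bullet.

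First I would dispatch the remaining two bullets exactly as in Theorem \ref{Theorem FL10}. For $X,Y$ tangent to $\mathcal{D}$, using $\T\omega=0$ and $\T_X\xi=\omega(X,\xi)\xi=0$ from \eqref{Equation 18}, one gets $\omega(\T_XY,\xi)=X(\omega(\xi,Y))+\omega(\T_X\xi,Y)=0$, so $\T$ restricts to the leaves and they are totally geodesic. Flatness is then read off by specializing \eqref{Equation 20}: when $X,Y,U,W\in\mathcal{D}$ every factor $\omega_{X\xi},\omega_{Y\xi},\omega_{U\xi},\omega_{W\xi}$ vanishes, killing all three summands of \eqref{Equation 20}, so that $R_{XYUW}=0$ along the leaves. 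None of these computations refers to the topology of $M$, so they carry over unchanged from the global statement.

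For the final assertion I would invoke Corollary \ref{Corollary RL8} with $K=\omega$. By hypothesis $S$ is a homogeneous structure, so $\TT R=0$, $\TT S=0$ and $\TT\omega=0$; moreover, since the Fedosov connection $\T$ is torsion-free we have $T=0$ and hence $\TT T=0$ automatically. Thus the second set of conditions in Corollary \ref{Corollary RL8} is satisfied, and $(M,\omega,\T)$ is reductive locally homogeneous associated to a Lie pseudo-group contained in $\mathrm{Aff}_{loc}(M,\T)$, i.e. locally Fedosov homogeneous. I expect the only genuinely delicate point to be the construction of the Hamiltonian without simply-connectedness, but this is precisely what the local Poincar\'e lemma resolves, at the cost of making $H$ only locally defined (as reflected in the statement). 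The passage to local homogeneity is then an immediate application of the already-established local Ambrose--Singer result, with the torsion-freeness of $\T$ making the extra hypothesis $\TT T=0$ cost-free.
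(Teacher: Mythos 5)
Your proof is correct and is precisely the localization the paper has in mind when it declares Theorem \ref{Theorem FL11} ``straightforward'' from Theorem \ref{Theorem FL10}: the closedness of $i_\xi\omega$ (from $\mathcal{L}_\xi\omega=0$ and $d\omega=0$) plus the Poincar\'e lemma replaces the simply-connectedness used to solve $i_\xi\omega=dH$ globally, the leaf computations (totally geodesic via \eqref{Equation 18} and $\T\omega=0$, flatness via \eqref{Equation 20}) carry over verbatim, and Corollary \ref{Corollary RL8} with $K=\omega$ (where $\TT T=0$ is automatic since $\T$ is torsion-free) yields local Fedosov homogeneity. No gaps.
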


We finish with two examples of Fedosov homogeneous structures of linear type.

\begin{example}
Let  $M=\{(x,y)\in \mathbb{R}^2:x>0\}$ be the half-plane endowed with a Fedosov structure defined by the symplectic form $\omega =  \frac{1}{3x^2} \mathrm{d} x \wedge \mathrm{d} y$ and the connection $\nabla$ given by the following non-vanishing Christoffel symbols
\[
\Gamma ^1 _{11} =-\frac{4}{3x},\qquad \Gamma ^2_{12}=\frac{2}{3x},\qquad \Gamma ^2_{21}=-\frac{2}{3x}.
\]
We consider the vector field
\[
\xi= x\frac{\partial}{\partial y},
\]
and the tensor field
\[
S_X Y = \omega_{XY} \xi - \omega_{Y \xi} X.
\]
One can check that
\[
\T \omega = 0, \quad T= 0, \quad \TT R = 0, \quad \TT S = 0,
\]
where $R$ is the curvature of $\nabla$ and $\TT = \T-S$, that is, $S$ is a homogeneous structure of linear type as in Theorem \ref{Theorem FL11}. The manifold $M$ is foliated by the leaves ($\{x =cte\}$) defined by the Hamiltonian $H(x,y) =\frac{-1}{3} \log (x)$, the connection $\T$ restricts to the leaves, and they are totally geodesic and flat.  Furthermore, since the vector fields $x\partial/\partial x$ and $\xi=x\partial/\partial y$ are complete geodesic vector fields of $\TT$, this connection is complete and $M$ is homogeneous Fedosov manifold as in Theorem \ref{Theorem FL10}. With respect to the group acting transitively on $M$, it turns our that $\curv=0$. The Nomizu construction \eqref{Nomizu construction} gives $M=G$, with $G=\mathrm{Aff}(1)_0$ the connected component of the identity of the group of affine transformation of $\mathbb{R}$.

If, instead, we consider
\[
\Gamma ^1 _{11} =-\frac{2}{x},
\]
as the only non-vanishing Christoffel symbol, and (for the sake of convenience with the computations) $\omega = \frac{1}{x^2} \mathrm{d} x \wedge \mathrm{d} y$, we again get that $(M,\omega , \nabla)$ is a homogeneous Fedosov manifold of linear type as in Theorem \ref{Theorem FL10} with $\xi =x\partial /\partial y$. In this case, one can check that
\[
\curv _{\xi \eta}\eta=-2\xi, \qquad \curv _{\xi \eta}\xi=0,
\]
with $\eta =x\partial /\partial x+ y\partial /\partial y$. The algebra $\mathfrak{g}=\mathrm{span}\{\xi, \eta, \curv _{\xi \eta}\}$ built from the Nomizu construction is the Lie algebra of the three dimensional Lie group acting on $M$ such that $M=G/H$ with $H\simeq \mathbb{R}$. One checks that, following the convention of \cite[p. 2194]{B2001}, the Lie algebra $\g$ of $G$ is the Lie algebra of type Bianchi VI, with real parameter $h=2$.

\end{example}


\addcontentsline{toc}{section}{References}

\nocite{*} 

\bibliographystyle{plain}

\end{document}